\newtheorem{theorem}{Theorem}[section]
\newtheorem{prop}[theorem]{Proposition}
\newtheorem{defn}[theorem]{Definition}
\newtheorem{lemma}[theorem]{Lemma}
\newtheorem{rem}[theorem]{Remark}
\def\Id{\mathop{\rm Id}\nolimits}
\DeclareMathOperator{\diff}{\mathrm{Diff}}
\DeclareMathOperator{\Lip}{\mathrm{Lip}}
\DeclareMathOperator{\spec}{\mathrm{spec}}
\newcommand{\loc}{\mathrm{loc}}
\newcommand{\Z}{\mathbb{Z}}
\newcommand{\R}{\mathbb{R}}
\newcommand{\N}{\mathbb{N}}
\newcommand{\T}{\mathbb{T}}
\begin{document}

\title[ Liv\v{s}ic Theorems for
  Non-Commutative Groups]{Liv\v{s}ic Theorems for
  Non-Commutative Groups including Diffeomorphism Groups and Results
  on the Existence of Conformal Structures for Anosov Systems}

\author[R. de la Llave \and A. Windsor]{Rafael de la Llave \and
  Alistair Windsor}

\begin{abstract}
  The celebrated Liv\v{s}ic theorem \cite{Livsic71} \cite{Livsic72},
  states that given $M$ a manifold, a Lie group $G$, a transitive
  Anosov diffeomorphism $f$ on $M$ and a H\"older function $\eta: M
  \mapsto G$ whose range is sufficiently close to the identity, it is
  sufficient for the existence of $\phi :M \mapsto G$ satisfying $
  \eta(x) = \phi( f(x)) \phi(x)^{-1}$ that a condition --- obviously
  necessary --- on the cocycle generated by $\eta$ restricted to
  periodic orbits is satisfied.

  In this paper we present a new proof of the main result. These
  methods allow us to treat cocycles taking values in the group of
  diffeomorphisms of a compact manifold.  This has applications to
  rigidity theory. 

  The localization procedure we develop can be applied to obtain some
  new results on the existence of conformal structures for Anosov
  systems.
\end{abstract}

\maketitle

\section{Introduction}

The goal of this paper is to give a unified presentation -- sometimes
involving sharper technical conclusions -- of the existence of
solutions to coboundary equations over Anosov systems. 

We will give precise definitions in Section
\ref{sec:LieGroupPreliminaries} but we anticipate that the main
concern will be whether, given an Anosov diffeomorphism on a manifold
$M$, and function $\eta: M \rightarrow G$, where $G$ is a group
(either a Lie group or a group of diffeomorphisms), there exists a
function $\phi: M \rightarrow G$ such that
\begin{equation}\label{preliminary}
\phi \circ f  = \eta \cdot \phi. 
\end{equation}
(We will also discuss the flow case, but we omit a preliminary
discussion of it).

In the standard terminology, if we can find a solution to equation
\eqref{preliminary} then we say that the cocycle generated by $\eta$
is a coboundary.  There are many other variations of this question.
For example, instead of taking $G$ to be a Lie group, it is possible
to consider $G$ to be a Banach algebra \cite{MR1612768} or 
a bundle map.  We will omit
other important variations, such as when $(M,f)$ is a subshift. We
will mention in Section \ref{sec:conformal} the situation when $\phi$
are conformal structures and $\eta$ is natural map induced by the
tangent map. The study of such Liv\v{s}ic theorems in more geometric
contexts seems fruitful and will be pursued in further papers.

Cocycles arise naturally in many situations. They are intrinsic to the
definitions of special flows and skew products. In the study of
dynamical systems, the chain rule indicates that the derivative is a
cocycle.  The coboundary equation is geometrically natural and hences
arises naturally in a number of situations.  In particular,
\eqref{preliminary} appears naturally in the linearization of more
complicated equations, for example, it appears in the linearization of
conjugacy equations.  Hence, cocycle equations are basic tools for the
rigidity program \cite{Zimmer, MR1448015,Survey}.  Cocycle equations
appear also in the study of the asymptotic growth properties of
dynamical systems.  Diffeomorphism valued cocycles appear when
considering the behavior of system relative to its behavior on a
factor. The study of \eqref{preliminary} with $M$ a shift space, appears
naturally in thermodynamic formalism when one tries to decide whether
two potentials give rise to the same Gibbs state \cite{MR0399421,
  Bowen}. 

Note that, when $f^n(p) = p$, the existence of a solution to
\eqref{preliminary} implies that
\begin{displaymath}
  \eta (f^{n-1} p) \cdots \eta(f p) \cdot \eta(p) = \Id. 
\end{displaymath}
If this necessary condition holds for all periodic points $p \in M$
then we say that the \emph{periodic orbit obstruction} vanishes.

It is natural to ask whether the converse is true. Namely, if given an
$\eta$ such that the periodic orbit obstruction vanishes, whether
there is a $\phi$ solving \eqref{preliminary}.  Another natural
question -- especially for applications to geometry -- is whether the
solutions of \eqref{preliminary} are regular.

In this paper, we will concentrate in the existence question, but
since we will also study the case when $G$ is a group of $C^r$
diffeomorphisms, some regularity considerations will come in.

The question of the existence of solutions to \eqref{preliminary} was
first studied by Liv\v{s}ic in \cite{Livsic71} and \cite{Livsic72},
when $f$ is a topologically transitive Anosov system, and in
\cite{Bowen} when $f$ was a subshift of finite type.  We will refer as
\emph{Liv\v{s}ic theorems} to theorems that guarantee the existence of
solutions of \eqref{preliminary} under the hypothesis that $f$ is a
topologically transitive Anosov system or flow.\footnote{ Some
  references use also the spelling Livshitz. We prefer to maintain the
  spelling used in the papers by the author and in much of the
  subsequent literature.}  These papers showed that when $f$ is
transitive and $\eta$ is H\"older, then the periodic obstruction is
sufficient for the existence of a H\"older $\phi$.  (Continuity of
$\eta$ is definitely not enough and there are counterexamples).

There is a considerable literature on Liv\v{s}ic theorems in various
contexts.  \cite{MR47:5902} shows that for real-valued H\"older
cocycles the existence of $L^\infty$ solution to the coboundary
equation with a H\"older $\eta$ implies the existence of a H\"older
solution - in the literature this is often called the measurable
Liv\v{s}ic theorem.  We note that the main difficulty of measurable
Liv\v{s}ic theorems is that \eqref{preliminary} is not assumed to hold
everywhere, but only on a set of full measure. Hence restricting to a
periodic orbit -- or to the stable manifold of a periodic point do not
make sense. The interpretation of the periodic orbit obstruction is
far from obvious.

This was extended in the non-commutative case 
to certain $L^p$ spaces using Sobolev regularity
techniques in \cite{MR1849605} though interestingly the case of $L^1$
solutions remains open in the non-commutative case. A version of the
measurable Liv\v{s}ic theorem for cocycles taking values in
semi-simple Lie groups without any integrability assumptions appears
in \cite{MR1855844} though they need to assume additional bunching
conditions on the cocycle. Similarly a version of the measurable
Liv\v{s}ic theorem for cocycles taking values in compact Lie groups
appears in \cite{MR1489146}. This was extended, under some additional
hypotheses, to the case of cocycles taking values in connected Lie
groups \cite{MR1828477}.  Similar results for hyperbolic flows appear
in \cite{MR1637106}. That some additional hypotheses are necessary is
proved by a counterexample with a cocycle taking values in a solvable
Lie group in \cite{MR1745387}.  Extensions of the measurable
Liv\v{s}ic theorem to more general dynamics appear in
\cite{MR1621702,MR2153466}. Analogues for Markov maps or for systems
with discontinuities, appear in \cite{MR2032492,MR2166668,MR2183291}.
The study of the equation for skew-products appears in
\cite{MR2276106,MR2153466}. Cohomology equations over higher
dimensional actions were studied in
\cite{NiticaT03,NiticaT02,FisherM}. In general the vanishing of the
periodic orbit obstruction can be difficult to verify though in some
cases it is implied by spectral data~\cite{MR0405514}. If we can
verify that the periodic orbit obstruction vanishes on periodic orbits
of period less than $T$ for some $T$ then we may still obtain
approximate solutions to the cohomology equation~\cite{MR1062764}.

When $M$ is a quotient of a group and a lattice and $f$ is an
automorphism, the equation \eqref{preliminary} can be studied using
group representation techniques.  For example, \cite{Livsic72}
considered the case $M = \T^2 = \R^2/\Z^2$ and \cite{ColletEG84}
considered $M = PSL(2, \R)/\Gamma$ and $f$ a geodesic flow.  A more
general study of \eqref{preliminary} using representation techniques
is in \cite{Moore}.  The representation theory methods yield
information on the regularity questions also and the first results on
regularity appeared in \cite{Livsic72}. The representation theory
methods need to assume that $f$ has an algebraic structure, but not
that it Anosov \cite{Veech,FlaminioF}.  Of course, the representation
theory methods also lead to obstructions.  Though the representation
theory obstructions must be equivalent to the periodic orbit
obstructions, the connection is mysterious. 

For the particular case of geodesic flows further geometric
information on the solutions of the coboundary equation is obtained
in~\cite{GuilleminK} for surfaces, and in~\cite{GuilleminK3} for
$n$-dimensional manifolds with a pinching condition.  The method of
these papers uses harmonic analysis in some directions of the problem
and obtains, not only regularity, but also several other geometric
properties of the solutions (e.g. that they are polynomial in the
angle variables).

The regularity theory for the case when $G$ is commutative and $f$ is
any Anosov system appeared in \cite{MR840722}, the technique did not
use any representation theory. The main idea was to show that the
solutions are regular along the stable/unstable leaves of an Anosov
system and then, show that this implies regularity. There are a number
of approaches for obtaining regularity of the solution from the
regularity of the solution along transverse foliations.  Besides the
original one using elliptic regularity theory, we can mention Fourier
series \cite{HurderK90}, Morrey-Campanato spaces \cite{Journe88},
Sobolev embedding \cite{MR2093313} and Whitney regularity
\cite{MR1194019, NicolT07}.  Higher regularity for the non-commutative
case was studied very thoroughly in \cite{NiticaT98}.

In the case of cocycles taking values in a commutative group, two
cocycles are cohomologous if and only if their difference is a
coboundary. This does not extend to non-commutative groups and thus in
this case it is natural to ask whether there is a criteria on periodic
orbits to determine whether two given cocycles are cohomologous.  This
has been addressed in \cite{MR1695916, MR1695917,NiticaT98}.

We will discuss the existence of solutions in the context of cocycles
taking values in Lie groups and cocycles taking values in
diffeomorphism groups. Our result on cocycles taking values in a
diffeomorphism group extends the earlier results of \cite{NiticaT95}
on $\diff^r(\T^n)$ to $\diff^r(N)$ for any compact manifold $N$. A
different approach for higher rank actions appears in
\cite{MR2318540}. 

The proof we present for the finite dimensional case is not very
different from the proof of \cite{Livsic72b}, but we rearrange some of
the terms in the cancellations in a slightly different way so that as
many of the terms are geometrically natural -- only objects in the
same fiber of the tangent bundle are compared. We make sure that the
only comparisons which are not geometrically natural happen only in
points which are very close.  Our presentation clearly illustrates the
r\^ole played by localization assumptions.  These localizations
assumptions depend on the nature of group. They are always implied by
$\eta$ taking values in a small enough neighborhood of the identity,
but they are also automatic if the group is commutative, compact or
nilpotent.  The behavior of non-commutative cocycle equations in the
absence of such localization assumptions depends on the global
geometry of the group and remains an open problem.  In particular one
of Liv\v{s}ic's original theorems \cite[Theorem 3]{Livsic72} is not
justified. Resolving whether localization is necessary was posed as an
open problem by Katok during the Clay Mathematics Institute and MSRI
Conference on Recent Progress in Dynamics, 2004.

The rearrangement of the terms so that they are geometrically natural
is not crucial in the case of Lie groups -- there are many other
alternative rearrangements which work -- but it becomes important in
the case that the group $G$ is a diffeomorphism group. In the
pioneering work \cite{NiticaT95}, the authors needed to assume that
the manifolds were essentially flat.  In Section
\ref{sec:diffeomorphismgroups}, we remove this assumption. The paper
\cite{NiticaT95} also contains applications of the results on
cohomology equations to rigidity of partially hyperbolic actions.  If
one inserts the improvements presented here on the arguments of the
the argument in \cite{NiticaT95,NiticaT01} one can also extend the
results of those papers.

\section{Some preliminaries on Anosov systems}

\subsection{Definitions}

Our cocycles will be over Anosov diffeomorphisms and Anosov
flows. These exhibit the strongest form of hyperbolicity, namely uniform
hyperbolicity on the entire manifold. 

\begin{defn}[Anosov Diffeomorphism]
  Let $M$ be a compact Riemannian manifold. A diffeomorphism $f \in
  \diff^r(M)$ for $r \geq 1$ is called an Anosov diffeomorphism if
  there exist $C>0$ and $\lambda <1$ and a splitting of the tangent
  bundle
  \begin{equation*}
    T M = E^s \oplus E^u
  \end{equation*}
  such that
  \begin{enumerate}
  \item For all $v \in E^s_x$ and for all $n > 0$
    \begin{equation*}
      \|Df^n_x v \| < C \lambda^n \| v \|.
    \end{equation*}

  \item For all $v \in E^u_x$ and for all $n < 0$
    \begin{equation*}
       \|Df^n_x v \| < C \lambda^{|n|} \| v \|.
    \end{equation*}
  \end{enumerate}
  If $f$ is an Anosov diffeomorphism with constants $C>0$ and
  $\lambda<1$ then we will call $f$ $\lambda$-hyperbolic.
\end{defn}

\begin{rem}
  Note that in the definition of Anosov diffeomorphism the metric
  enters explicitly. For a compact manifold $M$, if a diffeomorphism
  is Anosov in one metric, then it is Anosov in all metrics, and one
  can even take the same $\lambda$ for all the metrics.  The constant
  $C$, however, depends both on the metric and on the $\lambda$ that
  we choose. If $f$ is $\lambda$-hyperbolic then it is possible to
  choose a metric, as smooth as $M$, such that $f$ is
  $\lambda'$-hyperbolic with constant $C=1$ for any $\lambda'$ with
  $\lambda < \lambda' < 1$. Furthermore the metric may be chosen such
  that the sub-bundles $E^s$ and $E^u$ are orthogonal. Such a metric
  is sometimes called an ``adapted metric'' \cite{Mather68}.
\end{rem}

\begin{defn}[Anosov Flow]
  Let $M$ be a compact Riemannian manifold. A flow $f^t: M \rightarrow
  M$ is called an Anosov flow if there exist $C>0$ and $\lambda >0 $
  and a splitting of the tangent bundle
  \begin{equation*}
    T M = E^s \oplus E^0 \oplus E^u
  \end{equation*}
  such that
  \begin{enumerate}

  \item At each $x \in M$ the subspace $E^0_x$ is one dimensional and
    \begin{equation*}
    \bigl.\frac{d}{dt} f^t(x) \bigr|_{t=0} \in E^0_x\setminus\{0\}.
  \end{equation*}

  \item For all $v \in E^s_x$ and for all $t > 0$
    \begin{equation*}
      \|Df^t_x v \| < C e^{-\lambda t} \| v \|.
    \end{equation*}

  \item For all $v \in E^u_x$ and for all $t < 0$
    \begin{equation*}
       \|Df^t_x v \| < C e^{-\lambda |t|} \| v \|.
    \end{equation*}
  \end{enumerate}
  If $f^t$ is an Anosov flow with constants $C>0$ and $\lambda>0$ then
  we will call $f^t$ $\lambda$-hyperbolic.
\end{defn}

\subsection{Anosov Foliations}
\label{sec:foliations}

The sub-bundles $E^s, E^u \subset TM$ from the definition of Anosov
diffeomorphisms and flows are called the stable and unstable bundles
respectively. There are foliations $W^s$ and $W^u$ associated to $E^s$
and $E^u$ such that $T_xW^s(x) = E^s_x$ and $T_xW^u(x) = E^u_x$. These
foliations can be characterized by:
\begin{equation}
  \label{foliationdef}
  \begin{split} 
    W^s(x) 
    &= \{ y \in M : d_M(f^n(x), f^n(y)) \to 0 \} \\ 
    &= \{ y \in M : d_M(f^n(x), f^n(y)) \leq C_{x,y} \lambda^n, \; n >
    0 \}\\ 
    W^u(x)
    &=\{ y \in M : d_M(f^n(x), f^n(y)) \to 0 \\ 
    &=\{ y \in M : d_M(f^n(x), f^n(y)) \le C_{x,y} \lambda^{|n|}, \; n
    < 0 \}
  \end{split}
\end{equation}
Similarly for flows one may define the center stable and center
unstable bundles $E^{cs}$ and $E^{cu}$ by $E^{cs}_x=E^0_x \oplus
E^s_x$ and $E^{cu}_x = E^0_x \oplus E^u_x$. These are again integrable
and have associated foliations $W^{cs}$ and $W^{cu}$ respectively. 

The global structure of the stable and unstable manifolds may be quite
bad -- they are only immersed sub-manifolds. Moreover, though the
leaves of the foliation are as smooth as the map or flow the holonomy
between leaves is generally less regular than the map (the regularity
is limited by ratios of contraction exponents).  There are
many excellent sources for the theory of invariant manifolds -- see
for example \cite{HirschPS77} for an exposition of the Hadamard
approach and see \cite{MR2186242} for an exposition of the Perron
approach.  The original method of Poincar\'e was reexamined in modern
language and extended in \cite{CabreFL03}. A more comprehensive survey
is \cite{Pesin04}.

\subsection{The Anosov Closing Lemma}

For us the most crucial property of Anosov systems is the following
shadowing lemma, often called the Anosov closing lemma. 

\begin{lemma}[Anosov Closing Lemma for Flows]\label{AnosovClosingFlow}
  Let $f^t$ be an Anosov flow on a compact Riemannian manifold $M$.
  There exist $\epsilon_0 >0$, $K >0$, and $T_0 >0$ such that if for
  some $ T > T_0$
  \begin{equation*}
    d_M \bigl( f^Tx,x \bigr) < \epsilon_0
  \end{equation*}
  we can find a unique periodic point $p$ of period $T+\Delta$ satisfying 
  \begin{enumerate}
  \item[{\rm a)}]
    $\qquad \begin{aligned}[t]
      d_M(x,p) &\leq  K \, \epsilon_0\\
      d_M\bigl( f^T(x),p \bigr) &\leq  K \, \epsilon_0\\
      |\Delta|&\leq K \, \epsilon_0
    \end{aligned}$
  \item[{\rm b)}] $\qquad W_{\mathrm{loc}}^{s}(x) \cap W_{\mathrm{loc}}^{u}(p)
    \neq \varnothing$
  \end{enumerate}
  Moreover, this unique point satisfies: 
  \begin{enumerate}
  \item[{\rm c)}] $\qquad 
    \begin{aligned}[t]
      d_M(x,p) &\leq  K \,d_M \bigl( f^Tx,x \bigr)\\
      d_M\bigl( f^T(x),p \bigr) &\leq  K \, d_M \bigl( f^Tx,x \bigr)\\
      |\Delta|&\leq K \, d_M \bigl( f^Tx,x \bigr)
    \end{aligned}$
  \item[{\rm d)}] $\qquad W_{\mathrm{loc}}^{s}(x) \cap
    W_{\mathrm{loc}}^{u}(p) = \{z\}$
  \end{enumerate}
\end{lemma}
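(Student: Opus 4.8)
The plan is to prove the Anosov closing lemma for flows by reducing it, via a time-$1$ map construction, to the standard shadowing argument and then extracting the extra period correction $\Delta$. First I would set up the hyperbolic-fixed-point machinery: fix an adapted metric so that $Df^t$ contracts $E^s$ by $e^{-\lambda t}$ and expands $E^u$ correspondingly (with $C=1$), and so that the splitting $TM = E^s\oplus E^0\oplus E^u$ is orthogonal. Using a tubular neighborhood of the orbit segment $\{f^tx : 0\le t\le T\}$ and the exponential map, I would pass to local coordinates in which the flow becomes a small perturbation of its linearization. The key object is the Poincar\'e return construction: near the almost-closed orbit, one looks for a genuine periodic orbit by solving a fixed-point equation for the initial condition together with the unknown return-time offset $\Delta$.

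\smallskip

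The main steps, in order, would be: (1) Choose a cross-section $\Sigma$ at $x$ transverse to the flow direction $E^0_x$; the hypothesis $d_M(f^Tx,x)<\epsilon_0$ guarantees (for $T>T_0$ large, so the orbit has genuinely moved, and $\epsilon_0$ small) that $f^{T+\tau}x$ lies in a controlled neighborhood of $x$ for $|\tau|$ small, and the return map $P:\Sigma\cap U\to\Sigma$, $P(y)=f^{T+\tau(y)}(y)$ is well-defined and $C^1$-close to the hyperbolic linear map $Df^T_x|_{E^s_x\oplus E^u_x}$. (2) Apply the standard hyperbolic fixed point theorem / contraction mapping argument in the graph-transform or direct-iteration form to $P$: because $Df^T_x$ restricted to the section is $e^{-\lambda T}$-contracting on the stable part and $e^{-\lambda T}$-expanding on the unstable part, and $P$ moves the base point $x$ by at most $O(\epsilon_0)$, there is a unique fixed point $p\in\Sigma$ with $d_M(x,p)\le K\,d_M(f^Tx,x)$; this simultaneously pins down $\Delta = \tau(p)$ with $|\Delta|\le K\,d_M(f^Tx,x)$, and $p$ is then a periodic point of period $T+\Delta$. (3) The estimate $d_M(f^T x,p)\le K\,d_M(f^Tx,x)$ follows from $d_M(f^Tx,p)\le d_M(f^Tx,f^{T+\Delta}p) + d_M(f^{T+\Delta}p,p) = d_M(f^Tx,f^Tp)+0$ after bounding $d_M(f^Tx,f^Tp)$ using that $f^T$ distorts distances by a bounded factor on the small neighborhood. (4) For conclusions (b) and (d): the local stable manifold $W^s_{\loc}(x)$ and local unstable manifold $W^u_{\loc}(p)$ are graphs over $E^s_x$ and $E^u_p$ respectively (by the invariant manifold theory cited in Section~\ref{sec:foliations}), they are $C^1$-close transversals of complementary dimension inside the cross-section, both passing within $O(\epsilon_0)$ of the base point, so they intersect in exactly one point $z$; transversality plus the implicit function theorem gives uniqueness. (5) Finally, statements (a) are just (c) together with $d_M(f^Tx,x)<\epsilon_0$, so they require no separate argument.

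\smallskip

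The hard part will be step (2): making the fixed-point argument uniform and producing the \emph{linear-in-$d_M(f^Tx,x)$} bounds rather than merely bounds in terms of $\epsilon_0$. This needs the contraction/expansion rates $e^{-\lambda T}$ to dominate the $C^1$-size of the nonlinear error terms of the flow on the tubular neighborhood, which in turn forces the choice of $T_0$ (large enough that $e^{-\lambda T_0}$ beats the distortion constants of $f$ near the orbit) and $\epsilon_0$ (small enough that the tubular-neighborhood coordinates are valid and the perturbation is genuinely small). One must also check that the unknown return-time function $\tau(y)$ is itself Lipschitz with a constant independent of $T$ — this is where transversality of the section to $E^0$ and the adapted metric are used — so that solving for $p$ and $\Delta$ jointly is a contraction on the product of the section and a small interval. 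A secondary technical point is that the ``unstable direction'' for the return map is genuinely hyperbolic only after quotienting out the flow direction, so one works on $\Sigma$ (equivalently on $E^s_x\oplus E^u_x$) throughout, and then transports the fixed point back to a periodic orbit of the flow. Once these uniformities are in place, conclusions (a)--(d) are essentially bookkeeping with the triangle inequality and the invariant manifold graphs.
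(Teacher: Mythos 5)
The paper states Lemma~\ref{AnosovClosingFlow} without proof; it is treated as a standard fact from hyperbolic dynamics, so there is no in-paper argument to compare against. Your proposal --- adapted metric, tubular coordinates along the orbit segment, cross-section transverse to the flow, Poincar\'e return map plus the hyperbolic fixed point theorem to solve jointly for $p$ and the time offset $\Delta$, and invariant-manifold graphs in the section for (b) and (d) --- is the standard route, and the overall structure is sound. Your closing observation that (a) is a consequence of (c) plus the size hypothesis is also correct, and your resolution of the dimension count for $W^s_{\loc}(x)\cap W^u_{\loc}(p)$ by working inside the codimension-one section is exactly how the ambient dimension mismatch ($\dim E^s+\dim E^u=\dim M-1$) is meant to be handled; this also matches the paper's remark that (b) is precisely what restores uniqueness of $p$ for flows.

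Two points do need repair. First, in step (3) the chain
\begin{equation*}
d_M(f^Tx,p)\le d_M\bigl(f^Tx,f^{T+\Delta}p\bigr)+d_M\bigl(f^{T+\Delta}p,p\bigr)=d_M(f^Tx,f^Tp)+0
\end{equation*}
is both circular and wrong: since $f^{T+\Delta}p=p$, the middle expression is just $d_M(f^Tx,p)$, and the claimed rewriting as $d_M(f^Tx,f^Tp)$ does not hold. Fortunately the estimate you want is much cheaper --- once $d_M(x,p)\le K\,d_M(f^Tx,x)$ is known, simply write $d_M(f^Tx,p)\le d_M(f^Tx,x)+d_M(x,p)\le(1+K)\,d_M(f^Tx,x)$. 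This also avoids any appeal to a ``bounded distortion of $f^T$'' on the neighborhood, which would not be uniform in $T$ since $Df^T$ grows exponentially along $E^u$. Second, and for the same reason, your step (1) claim that the return map $P$ is ``$C^1$-close to $Df^T_x$'' on a fixed-size neighborhood is not literally a uniform-in-$T$ statement and cannot be what the contraction argument uses. The correct formulation is that $P$ carries a hyperbolic splitting close to $E^s_x\oplus E^u_x$ on which it contracts by $e^{-\lambda T}$ and expands by $e^{\lambda T}$, with nonlinear errors controlled \emph{per unit time} and then composed; one solves the fixed point problem by the usual decomposition into a forward contraction in the stable component and a backward contraction in the unstable component (equivalently, via the graph transform or a shadowing argument on a pseudo-orbit of the time-one map). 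Your ``hard part'' paragraph shows you see where the difficulty lies, but the quantifier ``choose $T_0$ large so $e^{-\lambda T_0}$ beats the distortion constants'' is slightly off; hyperbolicity is already present at the time-one scale, and $T_0$ in the statement is there chiefly to ensure the orbit segment is long enough that closing yields a genuine periodic orbit rather than degenerating near a rest point of the section construction.
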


\begin{rem}
  The statement of Lemma \ref{AnosovClosingFlow} is more involved than
  the corresponding one for diffeomorphisms, Lemma
  \ref{AnosovClosingDiffeomorphism}, because all the points in a
  periodic orbit are periodic, so that, in the case of flows, the set
  of periodic points of a given period, that lie in a neighborhood, is
  not discrete.  We can hope for uniqueness of the periodic point $p$
  only if some additional condition such as b) is imposed.  This is
  not needed in the case of diffeomorphisms, since periodic points of
  a fixed period are isolated.  Similarly, in the case of
  diffeomorphisms, since the set of periods is discrete, we do not
  have to consider the $\Delta$ that changes the period.

  \begin{psfrags}   
    \psfrag{10}{$ z$}
    \psfrag{9}{$ x$}
    \psfrag{8}{$ f^Tz$}
    \psfrag{7}{$ f^Tx$}
    \psfrag{2}{$ W^u(f^Tx)$}
    \psfrag{6}{$ p=f^{T+\Delta}p$}
    \psfrag{5}{$ f^Tp$}
    \psfrag{1}{$ W^u(x)$}
    \psfrag{4}{$ W^s(p)$}
    \psfrag{3}{$ W^s( f^Tp)$}
    \begin{figure}
      \begin{center}
        \includegraphics{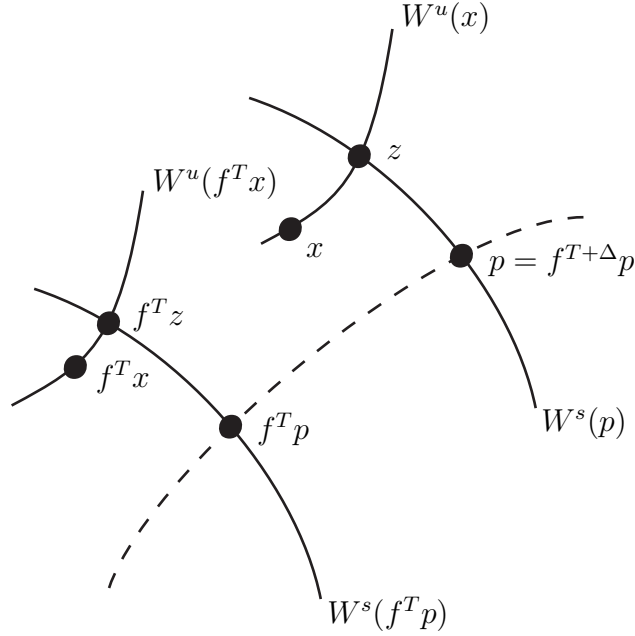}
        \label{shadow2}
        \caption{Illustration of the closing lemma, Lemma
          \ref{AnosovClosingFlow}}
      \end{center}
    \end{figure}
  \end{psfrags}
\end{rem}

\begin{lemma}[Anosov Closing Lemma for
  Diffeomorphisms]\label{AnosovClosingDiffeomorphism}
  Let $f$ be an Anosov diffeomorphism on a compact Riemannian manifold
  $M$.  There exist $\epsilon_0 >0$, $K >0$, and $\lambda >0$ such
  that if for some $n \in \Z$
  \begin{equation*}
    d_M \bigl( f^nx,x \bigr) < \epsilon_0
  \end{equation*}
  we can find a unique periodic point $p$ of period $n$ satisfying
  \begin{enumerate}
  \item[{\rm a)}]
    $\qquad \begin{aligned}[t]
      d_M(x,p) &\leq  K \, \epsilon_0\\
      d_M\bigl( f^nx,p \bigr) &\leq  K \, \epsilon_0\\
    \end{aligned}$
   \end{enumerate}
  Moreover, this unique point satisfies: 
  \begin{enumerate}
  \item[{\rm b)}] $\qquad 
    \begin{aligned}[t]
     d_M(x,p) &\leq  K \,d_M \bigl( f^n x,x \bigr)\\
      d_M\bigl( f^n x ,p \bigr) &\leq  K \, d_M \bigl( f^n x, x \bigr)\\
    \end{aligned}$
  \item[{\rm c)}] $\qquad W_{\mathrm{loc}}^{s}(x) \cap
    W_{\mathrm{loc}}^{u}(p) = \{z\}$
  \end{enumerate}
\end{lemma}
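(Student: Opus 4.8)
The plan is to reduce the statement to a contraction--mapping argument carried out along the finite orbit segment $x, fx, \dots, f^{n}x$, which keeps every estimate at a single fixed scale (doing the fixed--point problem directly for $f^n$ in one chart would force the domain to shrink like $\lambda^n$ and destroy uniformity). Write $x_i=f^ix$ for $0\le i\le n$, fix $\rho>0$ small in terms of the injectivity radius of $M$ and $\sup_M\|Df\|$, and pass to normal--coordinate representatives $f_i:=\exp_{x_{i+1}}^{-1}\circ f\circ\exp_{x_i}$, defined on $B_\rho\subset T_{x_i}M$, with $i$ taken cyclically modulo $n$ (so $x_n$ is identified with $x_0$). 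Each $f_i$ is $C^1$--close to its linear part $A_i:=Df_{x_i}\colon T_{x_i}M\to T_{x_{i+1}}M$, which preserves the hyperbolic splitting; working in an adapted metric (see the Remark) we may assume $\|A_i|_{E^s}\|\le\mu$ and $\|A_i^{-1}|_{E^u}\|\le\mu$ for a single $\mu\in(\lambda,1)$, with $E^s\perp E^u$. By construction $f_i(0)=0$ for $i=0,\dots,n-2$, while $\|f_{n-1}(0)\|=d(f^nx,x)=:\delta$. Any fixed point of the ``chain map'' --- a sequence $\Xi=(\xi_i)$, $\xi_i\in B_\rho\subset T_{x_i}M$, over the cyclic index set $\Z/n\Z$ with $\xi_{i+1}=f_i(\xi_i)$ for all $i$ --- produces a point $p=\exp_{x_0}(\xi_0)$ with $f^np=p$, and conversely a period-$n$ point whose whole orbit stays $\rho$--close to the segment arises this way.

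Next I would solve the linearized problem. Splitting $\xi_i=\xi_i^s+\xi_i^u$ along $E^s_{x_i}\oplus E^u_{x_i}$ and writing $f_i(\xi)=A_i\xi+R_i(\xi)$, the inhomogeneous cyclic linear system $\zeta_{i+1}=A_i\zeta_i+\eta_i$ has a unique bounded solution, obtained by summing the stable component forward and the unstable component backward, each a geometric series of ratio $\le\mu$; the decisive point is that the resulting solution operator $L$ on $\ell^\infty(\Z/n\Z;T_{x_\bullet}M)$ has norm $\le\frac{1}{1-\mu}$ \emph{uniformly in} $n$. Fixed points of $\Xi\mapsto L\big((R_i(\xi_i))_i\big)$ on $\overline{B_\rho}$ are exactly the chain--map fixed points sought. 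Since $R_i(0)=f_i(0)$ (zero except at $i=n-1$, where its norm is $\delta$) and $DR_i(0)=0$, on $B_\rho$ the map $R_i$ has Lipschitz constant at most a modulus of continuity $\omega(c\rho)$ of $Df$ on the compact manifold $M$; choosing $\rho$ small makes $L\circ R$ a contraction of $\overline{B_\rho}$, and then choosing $\epsilon_0$ (hence $\delta\le\epsilon_0$) small makes it map $\overline{B_\rho}$ into itself. The contraction mapping principle yields the unique fixed point $\Xi$, and the fixed--point identity together with $\|R_i(\xi_i)\|\le\|f_i(0)\|+\omega(c\rho)\|\xi_i\|$ gives $\|\Xi\|_\infty\le\frac{1}{1-\mu-\omega(c\rho)}\,\delta=:K_0\,d(f^nx,x)$. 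Reading off $d(x,p)=\|\xi_0\|$ and $d(f^nx,p)\le d(f^nx,x)+d(x,p)$ proves part (b) with $K=1+K_0$, and part (a) is then immediate.

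For uniqueness I would show that any period-$n$ point $q$ with $d(x,q)\le K\epsilon_0$ automatically shadows the whole segment, i.e.\ $d(f^ix,f^iq)\le\rho$ for $0\le i\le n$: if $i^\ast$ were the first index where this fails, the standard hyperbolic cone estimates applied to the coordinate orbit $(\exp_{x_i}^{-1}(f^iq))_{0\le i\le i^\ast}$ would force the unstable component (run forward from $0$) or the stable component (run backward from $i^\ast$, using periodicity to reach index $n$) to exceed scale $\rho$ at index $0$ or $n$, contradicting $d(x,q)\le K\epsilon_0$ once $\epsilon_0$ is small; hence $q$ gives a chain--map fixed point in $\overline{B_\rho}$ and $q=p$. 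Finally, part (c) is the local product structure of Anosov diffeomorphisms: once $\epsilon_0$ is taken below the size of a local product neighbourhood, $W^s_{\mathrm{loc}}(x)$ and $W^u_{\mathrm{loc}}(p)$ are uniformly transverse submanifolds and therefore meet in exactly one point $z$.

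The main obstacle is obtaining the bound on the linear solution operator $L$ \emph{independently of the period} $n$ --- this is exactly where uniform hyperbolicity is used, and it is what allows $K$ and $\epsilon_0$ to be chosen once and for all --- together with the bookkeeping in the uniqueness step verifying that a nearby period-$n$ orbit cannot leave the tube around the segment before it closes up. By comparison, the contraction estimate itself and the passage from $C^1$--closeness in charts to small Lipschitz constants for the remainders $R_i$ are routine.
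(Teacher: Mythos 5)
The paper states Lemma \ref{AnosovClosingDiffeomorphism} without proof; it is invoked as a standard fact from hyperbolic dynamics (with the invariant-manifold references in Section \ref{sec:foliations} serving as background). Your proposal is therefore not competing with an argument in the paper but supplying one, and the argument you give is the classical one: pass to a chain of normal-coordinate charts along the finite orbit segment, solve the inhomogeneous cyclic linear system by splitting into stable and unstable components and summing geometric series of ratio $\mu<1$, observe that the resulting solution operator $L$ on $\ell^\infty(\Z/n\Z)$ is bounded independently of $n$ (this is exactly where uniform hyperbolicity and the adapted metric enter), and close with the contraction mapping principle. The estimate $\|\Xi\|_\infty\le(1-\mu-\omega(c\rho))^{-1}\delta$ correctly yields part b), and part a) follows since $\delta<\epsilon_0$.

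Two points deserve a remark, though neither is an error. First, the claim $\|L\|\le\frac{1}{1-\mu}$ should be read in the $\max$-norm over the stable/unstable components (which is equivalent to the Riemannian norm once $E^s\perp E^u$ in the adapted metric); with the naive Euclidean norm one gets a harmless extra constant. Second, your uniqueness step is stated somewhat informally. The standard way to make the ``cannot leave the tube before closing up'' argument precise is expansivity of Anosov diffeomorphisms: there is an expansivity constant $\rho_0>0$ such that any two full orbits that stay $\rho_0$-close for all $i\in\Z$ coincide. Given a period-$n$ point $q$ with $d(x,q)\le K\epsilon_0$, one compares the bi-infinite periodic extensions of the orbits of $p$ and $q$ and invokes expansivity to conclude $q=p$ once $\epsilon_0$ is small; this replaces the informal ``cone estimates would force escape'' sentence with a one-line appeal to a named property. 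Part c), as you say, is exactly the local product structure of Anosov systems. Overall the proposal is a correct, self-contained proof of the result the paper takes for granted.
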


\subsection{Cocycles}

\begin{defn}
  Let $G$ be a group. A $G$-valued cocycle over a homeomorphism $f: M
  \rightarrow M$ is a map $\Phi:M \times \Z \rightarrow G$ that
  satisfies
  \begin{equation}
    \label{eq:CocycleConditionDiffeo}
    \Phi(x,m + n) = \Phi(f^n x, m) \cdot \Phi(x, n)
  \end{equation}
  for all $x \in M$, and $m,n \in \Z$. Here $\cdot$ denotes the group
  operation.
\end{defn}

\begin{defn}
  Let $G$ be a group. A $G$-valued cocycle over a flow $f^t: M
  \rightarrow M$ is a map $\Phi:M \times \R \rightarrow G$ that
  satisfies
  \begin{equation}
    \label{eq:CocycleConditionFlow}
    \Phi(x,s + t) = \Phi(f^t x, s) \cdot \Phi(x, t).
  \end{equation}
  where $x \in M$, and $s, t \in \R$. Here $\cdot$ denotes the group
  operation.
\end{defn}

\begin{rem}
  These are special cases of the more general definition of a cocycle
  over a group action. 
\end{rem}

Any cocycle $\Phi$ over a homeomorphism $f$ is determined entirely by its
generator $\eta: M \rightarrow G$ given by $\eta(x) = \Phi(x,1)$. The
cocycle $\Phi$ is reconstructed by
\begin{equation*}
  \Phi(x,n) =
  \begin{cases}
    \eta(f^{n-1}x) \cdots \eta(x) & n \geq 1\\
    \Id& n=0\\
    \eta^{-1}(f^{n}x) \cdots \eta^{-1}(f^{-1} x) & n \leq -1
  \end{cases}.
\end{equation*}

In the flow case the duality is not as complete. However if $G$ is a
Lie group with Lie algebra $\mathfrak{g}$, $f^t$ is a smooth flow on
$M$, and $\Phi$ is smooth then $\Phi$ is determined by its
infinitesimal generator $\eta: M \rightarrow \mathfrak{g}$ given by
\begin{equation*}
  \eta(x) = \frac{d}{dt} \Bigl. \Phi(x,t) \Bigr|_{t=0}.
\end{equation*}
The cocycle can the be reconstructed as the unique solution to 
\begin{equation*}
  \frac{d}{dt} \Phi(x,t) = DR_{\Phi(x,t)} \eta(f^t x), \qquad \Phi(x,0)=\Id
\end{equation*}
where $R_{\Phi(x,t)} : g \mapsto g \cdot \Phi(x,t)$ is the operation
of right multiplication by $\Phi(x,t)$, and hence $D R_{\Phi(x,t)}:
\mathfrak{g} \rightarrow T_{\Phi(x,t)} G$.

\section{Liv\v{s}ic Theory for Lie Group Valued Cocycles}
\label{sec:LieGroupPreliminaries}
Let $G$ be a Lie group endowed with a Riemannian metric. Let $d_G$ be
the length metric on the path-connected component of the identity in
$G$. If $G$ is non-compact the group operation need not be Lipshitz
but it is Lipshitz on any compact path-connected domain. As $G$ is a
Lie group the multiplication operator is smooth. Hence for every $g
\in G$ the operators $L_h: g \mapsto h \cdot g$ and $R_h: g \mapsto g
\cdot h$ are smooth.

For $F \in C^1(G,G)$ define
\begin{equation*}
  |F|_r := \max \{ \|D_g F \| : g \in \overline{B}(\Id, r) \}
\end{equation*}
This gives us the following estimates
\begin{equation}
  \label{eq:RightMultiplication}
  \text{ if $d_G\bigl( g h^{-1} , \Id \bigr)< r$ then } d_G\bigl( g, h
  \bigr) \leq |R_h|_r \, d_G\bigl( g h^{-1}, \Id \bigr).
\end{equation}
In general, without some localization, we cannot relate $d_G\bigl( g
h^{-1}, \Id \bigr)$ and $ d_G\bigl( g, h \bigr)$ but in the case of a
continuous function $\eta : M \rightarrow G$ on a compact manifold $M$
there exists $K > 0$ such that
\begin{equation}\label{eq:UniformEstimateEta}
  \begin{split}
  d_G\bigl( \eta(x) \eta^{-1}(y) , \Id \bigr) &< K \,
  d_G \bigl( \eta(x), \eta(y) \bigr)\\
  d_G\bigl( \eta^{-1}(x) \eta(y) , \Id \bigr) &< K \, 
  d_G \bigl( \eta(x), \eta(y) \bigr)
\end{split}
\end{equation}
since $\eta(M)$ is a compact subset of $G$.

\begin{theorem}\label{thm:LivsicLieGroupDiffeo}
  Let $M$ be a compact Riemannian manifold, $f:M \rightarrow M$ be a
  $C^1$ topologically transitive $\lambda$-hyperbolic Anosov
  diffeomorphism, and $G$ be a Lie group. Let $\Phi \in
  C^\alpha( M \times \Z , G )$ be a cocycle. For $x,y \in M$ define
  $\Delta_{x,y}^n:G \rightarrow G$ by
  \begin{equation*}
    \Delta_{x,y}^n(g) = \Phi^{-1}(x,n) \, g \, \Phi(y,n).
  \end{equation*}
  Suppose there exists $\rho > 1$ such that for all $x,y \in M$ and all
  $n \in \Z$
  \begin{equation}
    \label{eq:DiffeomorphismLocalization}
    | \Delta_{x,y}^n |_{\rho^{-|n|}} \leq K \rho^{|n|}.  
  \end{equation}
  Suppose that that for the the pair $(f,\Phi)$:
  \begin{enumerate}
  \item The periodic orbit obstruction vanishes:
    \begin{quote}
      If $f^n p = p$ then $\Phi(p,n) = \Id$.
    \end{quote}
    
  \item The hyperbolicity condition is satisfied:
    \begin{equation}
      \label{eq:HypCondDiffeo}
      \rho \lambda^\alpha <1 
    \end{equation}
  \end{enumerate} 
  then there exists $\phi \in C^\alpha(M,G)$ that solves
  \begin{equation}
    \label{eq:Coboundary}
    \Phi(x,n) = \phi(f^n x) \phi^{-1}(x). 
  \end{equation}
  Moreover, if $\hat\phi$ is any other continuous solution to
  \eqref{eq:Coboundary} then
  \begin{equation*}
    \hat\phi = \phi \cdot g
  \end{equation*}
  for some $g \in G$
\end{theorem}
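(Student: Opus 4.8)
The plan is to construct $\phi$ first along a single dense orbit, then extend it to all of $M$ using the Anosov closing lemma (Lemma \ref{AnosovClosingDiffeomorphism}) together with the localization hypothesis \eqref{eq:DiffeomorphismLocalization} and the hyperbolicity condition \eqref{eq:HypCondDiffeo}. Fix a point $x_0$ whose forward orbit is dense in $M$ (such a point exists by topological transitivity). Define $\phi$ on the orbit $\{f^n x_0\}_{n \in \Z}$ by the obvious formula $\phi(f^n x_0) := \Phi(x_0, n)$; the cocycle identity \eqref{eq:CocycleConditionDiffeo} makes this consistent and, by construction, it solves \eqref{eq:Coboundary} on the orbit. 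The entire content of the theorem is that this assignment is uniformly $\alpha$-H\"older on the orbit with respect to $d_M$ and $d_G$, and hence extends to a map $\phi \in C^\alpha(M,G)$; once that is known, \eqref{eq:Coboundary} holds everywhere by continuity since both sides are continuous and agree on a dense set.

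The key estimate is therefore the following: there is a constant $L$ so that $d_G\bigl(\phi(f^m x_0)\,\phi^{-1}(f^n x_0),\Id\bigr) \leq L\, d_M(f^m x_0, f^n x_0)^\alpha$ whenever $f^m x_0$ and $f^n x_0$ are close in $M$. Here is how I would get it. Set $a = f^n x_0$, $b = f^m x_0$, and suppose $d_M(a,b) = d_M(f^{m-n}a, \dots)$ is small. Writing $N = m-n$, we have $f^N(f^n x_0)$ within $d_M(a,b)$ of $f^n x_0$, so apply the closing lemma to $y = f^n x_0$ and the exponent $N$: we obtain a periodic point $p$ with $f^N p = p$ and $d_M(f^n x_0, p) \leq K\, d_M(a,b)$, $d_M(f^N(f^n x_0), p) \leq K\, d_M(a,b)$, and a point $z \in W^s_{\loc}(f^n x_0) \cap W^u_{\loc}(p)$. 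Because $f^N p = p$, the periodic orbit obstruction gives $\Phi(p,N) = \Id$. Now I express the quantity to be bounded as a telescoping product: roughly,
\begin{equation*}
  \phi(f^m x_0)\,\phi^{-1}(f^n x_0) = \Phi(f^n x_0, N) = \Phi(f^n x_0,N)\,\Phi(p,N)^{-1},
\end{equation*}
and I want to compare $\Phi(f^n x_0, N)$ with $\Phi(p,N)$ fiberwise. The standard device is to slide along the stable manifold from $f^n x_0$ to $z$ (so that forward iterates converge at rate $\lambda^n$), then along the unstable manifold from $z$ to $p$ (so that backward iterates converge at rate $\lambda^{|n|}$), writing the difference of the two cocycles as a sum of terms each of which compares $\eta$ at two points that are $C\lambda^{\min(j,\,N-j)}$-close in $M$, conjugated by $\Delta^{j}_{\cdot,\cdot}$. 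Using the H\"older continuity of $\eta$ (coming from $\Phi \in C^\alpha$), the $j$-th such term is bounded in $d_G$ by a constant times $|\Delta^j|_{\rho^{-j}} \cdot \lambda^{j\alpha}$ (and symmetrically $\rho^{-(N-j)}$ against $\lambda^{(N-j)\alpha}$), where one must check that the displacement in $G$ is indeed within the radius $\rho^{-j}$ so that \eqref{eq:DiffeomorphismLocalization} applies — this is where I would feed in that the relevant $d_G$-displacements are themselves controlled by earlier terms and shrink geometrically, so that a bootstrap/induction on $j$ keeps everything inside the allowed ball. Summing, \eqref{eq:DiffeomorphismLocalization} turns each term into $K\rho^{j}\lambda^{j\alpha}$ times (H\"older constant of $\eta$) times $d_M(a,b)^\alpha$-type factors, and the geometric series $\sum_j (\rho\lambda^\alpha)^{j}$ converges precisely because of the hyperbolicity condition \eqref{eq:HypCondDiffeo}. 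This produces the desired uniform H\"older bound with $L$ depending only on $K$, $\rho$, $\lambda$, $\alpha$, and the H\"older data of $\Phi$.

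The main obstacle, and the place demanding care, is exactly the interplay between the two localization inputs: the closing lemma only controls distances in $M$, while \eqref{eq:DiffeomorphismLocalization} is a statement about how $\Delta^n_{x,y}$ distorts a ball of $G$-radius $\rho^{-|n|}$, so one must verify that every intermediate group element produced in the telescoping sum actually lies in the correct small ball before the estimate \eqref{eq:DiffeomorphismLocalization} is legitimately invoked — a non-commutative subtlety with no analogue in the abelian case, where it suffices to add the terms. I would handle this by an induction showing that the partial products stay within $\overline{B}(\Id,\rho^{-j})$ (shrinking the initial neighborhood of $\Id$ that $\eta(M)\,\eta(M)^{-1}$ must lie in, if necessary, and using \eqref{eq:RightMultiplication} and \eqref{eq:UniformEstimateEta} to pass between $d_G(g,h)$ and $d_G(gh^{-1},\Id)$). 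Two routine matters remain. First, the estimate must also be checked when $f^m x_0$ and $f^n x_0$ are merely close in $M$ but $|m-n|$ is large — then one still applies the closing lemma with exponent $N = m-n$ and the same argument goes through, the point being that the hypothesis \eqref{eq:DiffeomorphismLocalization} is uniform in $n$. Second, the uniqueness clause: if $\hat\phi$ is any continuous solution, then $\hat\phi^{-1}\phi$ satisfies $\hat\phi^{-1}(f^n x)\,\phi(f^n x) = \hat\phi^{-1}(x)\,\phi(x)$ for all $n$, i.e. it is a continuous $f$-invariant map $M \to G$; since the orbit of $x_0$ is dense and this map is constant on it, it is globally constant, giving $\hat\phi = \phi\cdot g$ — note I inverted on the correct side: from \eqref{eq:Coboundary} for both $\phi$ and $\hat\phi$ one gets $\hat\phi(f^n x)\phi^{-1}(f^n x) = \hat\phi(x)\phi^{-1}(x)$, an invariant continuous function, hence constant equal to some $g^{-1}$... concretely $\phi^{-1}\hat\phi$ is invariant, so constant, so $\hat\phi = \phi\cdot g$ for a fixed $g\in G$.
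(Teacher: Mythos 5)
Your proposal follows essentially the same strategy as the paper's proof: produce a dense orbit via transitivity, invoke the Anosov closing lemma to get a periodic point $p$ and a messenger point $z$, run forward and backward telescoping comparisons of the cocycle along the two converging orbit pairs, use the localization hypothesis \eqref{eq:DiffeomorphismLocalization} to control the conjugations $\Delta^m$, and use the hyperbolicity condition \eqref{eq:HypCondDiffeo} to sum the resulting geometric series. The uniqueness argument is the same modulo phrasing. There is, however, one genuine gap in the reduction at the beginning.

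You state the key estimate as a bound on $d_G\bigl(\phi(f^m x_0)\phi^{-1}(f^n x_0),\Id\bigr)$, i.e.\ on $d_G\bigl(\Phi(f^n x_0,N),\Id\bigr)$, and assert that this is uniform $\alpha$-H\"olderness of $\phi$ on $\mathcal{O}(x_0)$, which is what is needed to extend. But uniform H\"olderness means controlling $d_G\bigl(\phi(f^m x_0),\phi(f^n x_0)\bigr)$, and by \eqref{eq:RightMultiplication} this is bounded by $|R_{\phi(f^n x_0)}|_1\cdot d_G\bigl(\Phi(f^n x_0,N),\Id\bigr)$: the factor $|R_{\phi(f^n x_0)}|_1$ depends on where $\phi(f^n x_0)$ sits in $G$ and is a priori unbounded as $n$ ranges over $\Z$. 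Your own mention of \eqref{eq:RightMultiplication} and \eqref{eq:UniformEstimateEta} acknowledges that such a translation is needed, without noting that the translation cost is not automatically uniform. In a compact or commutative $G$ this is free; in general one must first prove that $\phi(\mathcal{O}(x_0))$ is a precompact subset of $G$. The paper does exactly this: it covers $M$ by finitely many balls centered at orbit points $f^{n_i}x_0$, uses the local H\"older estimate once to bring an arbitrary $\phi(f^n x_0)$ within bounded $d_G$-distance of one of the finitely many $\phi(f^{n_i}x_0)$, and concludes $L=\sup_n |R_{\phi(f^n x_0)}|_1<\infty$. Without this step the implication from a bound on $\Phi$-increments to a H\"older bound on $\phi$ is not justified.

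Two smaller points. First, the bootstrap/induction on $j$ you propose to verify that arguments stay in $\overline{B}(\Id,\rho^{-j})$ is not needed: the object fed into $\Delta^m_{p,z}$ is the single increment $\eta^{-1}(f^m p)\eta(f^m z)$, whose $d_G$-distance to $\Id$ is directly bounded by $C\lambda^{\alpha m}$ via \eqref{eq:UniformEstimateEta} and the stable contraction, so taking $\delta$ small enough makes it less than $\rho^{-m}$ at every step with no recursion. Second, your placement of $z$ (namely $z\in W^s_{\loc}(f^n x_0)\cap W^u_{\loc}(p)$) matches the statement of Lemma~\ref{AnosovClosingDiffeomorphism} but is opposite to the paper's usage in its own proof ($z\in W^s(p)\cap W^u(f^n x^*)$); either convention works after relabeling the two telescoping sums, but be consistent when writing out the forward and backward estimates.
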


\begin{rem}
  Condition \ref{eq:DiffeomorphismLocalization} will be examined in
  greater detail in Section \ref{sec:localization}. If the group is a
  commutative Lie matrix group endowed with a matrix norm, or a
  commutative Lie group endowed with an invariant metric then no
  localization condition is required. 
\end{rem}

\begin{proof}
  Rearranging \eqref{eq:Coboundary} we obtain
  \begin{equation}
    \label{eq:IteratedCoboundary}
    \phi(f^n x) = \Phi(x,n) \cdot \phi(x)
  \end{equation}
  Thus we see that fixing $\phi(x)$ immediately determines $\phi$ on
  the entire orbit of $x$. Since $f$ is topologically transitive there
  exists a point $x^*$ with a dense orbit, $\mathcal{O}(x^*)$. Fixing
  $\phi(x^*)$ therefore defines a function $\phi:\mathcal{O}(x^*)
  \rightarrow G$. This shows that any continuous solution $\hat\phi$
  to \eqref{eq:Coboundary} is uniquely determined by
  $\hat\phi(x^*)$. Thus choosing $g = \phi^{-1}(x^*)\cdot
  \hat\phi(x^*)$ we get
  \begin{equation*}
    \hat\phi(x) = \phi(x) \cdot g.
  \end{equation*}

  It remains to show that the $\phi:\mathcal{O}(x^*) \rightarrow G$
  defined by \eqref{eq:IteratedCoboundary} can be extended to a
  $C^\alpha$ function $\phi: M \rightarrow G$. Standard arguments show
  that we can extend $\phi:\mathcal{O}(x^*) \rightarrow G$ provided it
  is uniformly $C^\alpha$ on $\mathcal{O}(x^*)$, i.e. there exists a
  $\delta>0$ and $K>0$ such that
  \begin{multline}
    \label{eq:GlobalHolder}
    \text{ if $d_M( f^{n+N}x^*, f^nx^*)<\delta$ then}\\
    d_G\bigl(\phi(f^{n+N}x^*), \phi(f^nx^*)\bigr) < K d_M( f^{n+N}x^*,
    f^nx^*)^\alpha.
  \end{multline}

  We have the following basic estimate from \eqref{eq:RightMultiplication}
  \begin{multline}
    \label{eq:LocalBounds}
     \text{ if $d_G\bigl( \Phi(f^nx^*,N), \Id \bigr)< 1$ then}\\
     d_G\bigl( \phi(f^{n+N}x^*) ,  \phi(f^nx^*) \bigr)  \leq
     | R_{\phi(f^nx^*)} |_1 \,
    d_G\bigl( \Phi(f^nx^*,N), \Id \bigr).
  \end{multline}

  First we show that the following H\"older condition on $\Phi$, 
  \begin{multline}
    \label{eq:LocalHolder}
    \text{ if $d_M( f^{n+N}x^*, f^nx^*)<\delta$ then}\\
    d_G\bigl( \Phi(f^nx^*,N), \Id \bigr)  < K d_M( f^{n+N}x^*, f^nx^*)^\alpha
  \end{multline}
  is equivalent to the H\"older condition \eqref{eq:GlobalHolder}.
  
  Choose $0<\delta' \leq \delta$ so that $K (\delta')^\alpha<1$. The
  collection $\{B(f^n x^*, \delta') \}_{n \in \Z}$ is an open cover of
  $M$ and therefore by compactness we have a finite sub-cover
  $\{B(f^{n_i} x^*, \delta') \}_{i = 1}^m$. Let
  \begin{align*}
    L &= \max_{i=1, \dots, m} |R_{\phi(f^{n_i}x^*)} |_1 
    \intertext{and }
    O &= \max_{i=1, \dots, m} d_G \bigl(\phi(f^{n_i}x^*),\Id \bigr).
  \end{align*}
  Given an arbitrary $n \in \Z$ we choose $1 \leq i \leq m$ such that
  $d_M(f^n x^*, f^{n_i}x^*)< \delta'$. From \eqref{eq:LocalHolder} we
  get $d_G \bigl( \Phi(f^{n_i}x^*, n-n_i), \Id \bigr) < 1$ and hence
  can use \eqref{eq:RightMultiplication}
  \begin{align*}
    d_G\bigl( \phi(f^nx^*) &, \Id)\\  
    &\leq d_G \bigl( \phi(f^nx^*) ,
    \phi(f^{n_i}x^*) \bigr) + d_G \bigl( \phi(f^{n_i}x^*), \Id \bigr)\\
    &\leq |R_{\phi(f^{n_i}x^*)}|_1 \, d_G \bigl( \Phi(f^{n_i}x^*, n-n_i),
    \Id \bigr) +d_G \bigl( \phi(f^{n_i}x^*), \Id \bigr)\\
    &\leq O + L.
  \end{align*} 
  Now that we know that $\phi\bigl(\mathcal{O}(x^*)\bigr)$ is a
  precompact subset of $G$ we have
  \begin{equation*}
    L = \sup_{n \in \Z} |R_{\phi(f^nx^*)}|_1 < \infty.
  \end{equation*}
  and hence from \eqref{eq:LocalBounds}
  \begin{multline}
    \label{eq:GlobalBounds}
    \text{ if $d_G\bigl( \Phi(f^nx^*,N), \Id \bigr)< 1$ then}\\
    d_G\bigl( \phi(f^{n+N}x^*) ,  \phi(f^nx^*) \bigr)  \leq L \,
    d_G\bigl( \Phi(f^nx^*,N), \Id \bigr).
  \end{multline}
  Then applying \eqref{eq:LocalHolder} we obtain
  \eqref{eq:GlobalHolder}. Thus to prove the theorem it suffices to prove
  \eqref{eq:LocalHolder}.

  Applying the Anosov Closing Lemma, Lemma
  \ref{AnosovClosingDiffeomorphism}, we obtain $p$ with $f^Np=p$ and
  $d_M( f^n x^*, p) \leq K d_M( f^{n+N}x^*, f^nx^*)$ and $z \in W^s(p)
  \cap W^u(f^nx^*)$.

  We will compare the cocycle $\Phi$ along two trajectories that
  converge exponentially in forward time. Let $C_F(m) :=
  \Phi^{-1}(p,m) \cdot \Phi(z,m)$. We have for $m \in \N$.
  \begin{align*}
    C_F(m+1) &= \Phi^{-1}(p,m)  \eta^{-1}(f^mp) \cdot \eta (f^m z)
    \Phi(z,m)\\
    &= \Delta_{p,z}^m \bigl(   \eta^{-1}(f^mp) \cdot \eta (f^m z) \bigr) 
  \end{align*}
  We have for $d_M(f^{n+N}x^*, f^nx^*)$ sufficiently small
  \begin{align*}
    d_G\bigl( \eta^{-1}(f^mp) \cdot \eta (f^m z), \Id \bigr) & \leq C_1 \,
    d_G\bigl( \eta(f^m p) , \eta(f^mz) \bigr) &&
    \text{\eqref{eq:UniformEstimateEta}}\\ 
    & \leq C_2 \, d_M(f^m p , f^m z)^\alpha && \eta \in
    C^\alpha(M,G)\\
    & \leq C_3 \, \lambda^{\alpha m} \, d_M(p,z)^\alpha&&z \in W^s(p) \\
    & \leq C_4 \, \lambda^{\alpha m} \, d_M(f^{n+N}x^*, f^nx^*)^\alpha\\
    & \leq C_4 \, \lambda^{\alpha m} \, \delta^\alpha.
  \end{align*}
  so we can choose $\delta>0$ sufficiently small that for $m \geq 0$
   \begin{equation*}
     d_G\bigl( \eta^{-1}(f^mp) \cdot \eta (f^m z), \Id \bigr) <
     \lambda^{\alpha m} < \rho^{-m}
   \end{equation*}
   and hence we can apply \eqref{eq:DiffeomorphismLocalization} to obtain
   \begin{align*}
     d_G\bigl( C_F(m+1), C_F(m)\bigr) &\leq d_G \bigl( \Delta_{p,z}^m
     \bigr( \eta^{-1}(f^mp) \cdot \eta (f^m z) \bigr), \Delta_{p,z}^m \bigl(
     \Id \bigr) \bigr) \\
     &\leq | \Delta_{p,z}^m |_{\rho^{-m}} \, d_G\bigl( \eta^{-1}(f^mp)
     \cdot \eta (f^m z), \Id \bigr)
   \end{align*}
   Thus
   \begin{align*}
     d_G\bigl( C_F(m+1), C_F(m) \bigr) &\leq | \Delta_{p,z}^m
     |_{\rho^{-m}} \, C_4 \, \lambda^{\alpha m} \, d_M(f^{n+N}, f^nx^*)^\alpha\\
     & \leq C_5 \, (\rho \lambda^\alpha)^m \, d_M(f^{n+N}x^*,
     f^nx^*)^\alpha
  \end{align*}
  By using the hyperbolicity assumption \eqref{eq:HypCondDiffeo} we
  get the following bound
  \begin{equation*}
     d_G \bigl( C_F(m), \Id \bigr) \leq \frac{C_5}{1 - \rho
      \lambda^\alpha} \,  d_M( f^{n+N}x^*, f^nx^*)^\alpha.   
  \end{equation*}
  In particular, since $\Phi(p,N) = \Id$, we have $C_F(N) = \Phi(z,N)$
  and hence obtain
  \begin{equation}
    \label{eq:PhizEstimate}
    d_G \bigl( \Phi(z,N) , \Id \bigr)  \leq \frac{C_5}{1 - \rho
      \lambda^\alpha} \,  d_M( f^{n+N}x^*, f^nx^*)^\alpha. 
  \end{equation}
  This means that $ d_G \bigl( \Phi(z,N) , \Id \bigr)$ is uniformly
  bounded and consequently $|R_{\Phi(z,N)}|_1$ is uniformly bounded. 

  Now we compare $\Phi$ along two trajectories that converge
  exponentially in backwards time. For $m \in \N$ we define 
  \begin{align*}
    C_R(m)&= \Phi^{-1}( f^{n+N}x^*,-m) \cdot \Phi(f^Nz,-m)\\
    &= \Delta_{f^{n+N}x^*,f^N z}^{-m}(\Id)
  \end{align*}
  From the definition of the cocycle we obtain
  \begin{equation*}
    C_R(m+1) =  \Delta_{f^{n+N}x^*,f^Nz}^{-m}\bigl( \eta(f^{n+N-m-1}x^*)
    \eta^{-1}(f^{N-m-1}z) \bigr).
  \end{equation*}
  Exactly as before, we are able to estimate
  \begin{equation*}
    d_G \bigl( C_R(m+1), C_R(m) \bigr)  \leq C_5 \, (\rho \lambda^\alpha)^m
    \, d_M(f^{n+N}x^*, f^nx^*)^\alpha. 
  \end{equation*}
  By the hyperbolicity assumption we have $\rho \lambda^\alpha < 1$
  and hence,as before, we get the following bound, uniform in $m$
  \begin{equation*}
    d_G \bigl( C_R(m) , \Id \bigr) \leq \frac{ C_5}{1 -
      \rho\lambda^\alpha} d_M( f^{n+N}x^*, f^nx^*)^\alpha.  
  \end{equation*}
  In particular, for $m = N$
  \begin{equation}
    \label{eq:DNEstimate}
    d_G \bigl( C_R(N) , \Id \bigr) \leq
    \frac{C_5}{1 - \rho\lambda^\alpha} d_M(f^{n+N}x^*, f^nx^*)^\alpha. 
  \end{equation}
  From the cocycle property we obtain
  \begin{align*}
    \Phi(f^Nz,-N) &= \Phi^{-1}(z,N)
    \intertext{and} 
    \Phi^{-1}(f^{N+n}x^*,-N) &=\Phi(f^nx^*,N). 
  \end{align*}
  Thus
  \begin{align*}
    d_G \bigl( \Phi(f^nx^*,&N), \Id \bigr) \\
    &\leq
    \begin{aligned}[t]
      & d_G \bigl( \Phi^{-1}(
    f^{n+N}x^*,-N) \Phi(f^Nz,-N ) \Phi(z,N) , \Phi(z,N) \bigr) \\
    & \quad + d_G(\Phi(z,N) , \Id )
    \end{aligned}\\
    & \leq |R_{\Phi(z,N)}|_1 \, d_G\bigl(C_R(N),\Id \bigr)+ d_G\bigl(
    \Phi(z,N) ,\Id \bigr).
  \end{align*}
  From \eqref{eq:PhizEstimate}, the fact $|R_{\Phi(z,N)}|_1$ is
  uniformly bounded, and \eqref{eq:DNEstimate} we get
  \begin{equation*}
    d_G \bigl( \Phi(f^nx^*,N), \Id \bigr)  \leq L \, d_M( f^{n+N}x^*,
    f^nx^*)^\alpha 
  \end{equation*}
  which establishes \eqref{eq:LocalHolder} and hence completes the
  proof.
\end{proof}

\begin{rem}
  The hyperbolicity condition \eqref{eq:DiffeomorphismLocalization} we
  require is stronger than what we actually use. We could make do with
  the more complicated condition: there exists $\rho>1$ such that 
  \begin{enumerate}
  \item for all $x \in M$, all $y \in W^s(x)$, and all $n \geq 0$
    \begin{equation*}
      | \Delta_{x,y}^n |_{\rho^{-n}} \leq K \rho^{n}.
    \end{equation*}

  \item for all $x \in M$, all $y \in W^u(x)$, and all $ n \leq 0$ 
    \begin{equation*}
      | \Delta_{x,y}^n |_{\rho^{n}} \leq K \rho^{-n}.
    \end{equation*}
  \end{enumerate}
  This more complicated hyperbolicity condition is useful in the case
  of commutative groups endowed with matrix norms. 
\end{rem}

We now give a similar proof for Lie group valued cocycles over Anosov
flows. 

\begin{theorem}\label{thm:LivsicLieGroupFlow}
  Let $M$ be a compact Riemannian manifold, $f^t:M \rightarrow M$ be a
  $C^1$ topologically transitive $\lambda$-hyperbolic Anosov flow, and 
  $G$ be a Lie group. Let $\eta \in C^\alpha(M,\mathfrak{g})$.

  Define the cocycle $\Phi: M \times \R \rightarrow G$ by
  \begin{equation*}
    \frac{d}{dt} \Phi(x,t) = DR_{\Phi(x,t)}\eta(f^t x), \qquad \Phi(x,0)
    = \Id
  \end{equation*}
  and let $\Delta_{x,y}^t : \mathfrak{g} \rightarrow
  T_{\Phi^{-1}(x,t)\Phi(y,t)} G$ be given by
  \begin{equation*}
    \Delta_{x,y}^t = DL_{\Phi^{-1}(x,t)} DR_{\Phi(y,t)}. 
  \end{equation*}
  Assume the following localization condition; there exist $K, \rho
  >0$ such that for all $ x.y \in M$
  \begin{equation}
    \label{eq:FlowLocalization}
    \| \Delta_{x,y}^t \| \leq K e^{\rho |t|}
  \end{equation}
  where $\| \cdot \|$ is the standard operator norm. Suppose that for
  the pair $f^t$ and $\eta$:
  \begin{enumerate}
  \item The periodic orbit obstruction vanishes:
    \begin{quote}
      If $f^tp=p$ then $\Phi(p, t) = \Id$.
    \end{quote}

  \item The hyperbolicity condition is satisfied:
    \begin{equation}
      \label{eq:HypCondFlow}
      \rho - \lambda \alpha  < 0. 
    \end{equation}
  \end{enumerate}
  Then there exists $\phi \in C^\alpha(M,G)$ that solves
  \begin{equation}
    \label{eq:FlowCoboundary}
    \Phi(x,t) = \phi(f^tx) \phi^{-1}(x). 
  \end{equation}
\end{theorem}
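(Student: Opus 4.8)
The plan is to mimic the proof of the diffeomorphism case (Theorem \ref{thm:LivsicLieGroupDiffeo}) with the obvious adaptations to continuous time. First I would fix a point $x^*$ with a dense orbit $\mathcal{O}(x^*) = \{ f^t x^* : t \in \R\}$, which exists by topological transitivity. Equation \eqref{eq:FlowCoboundary} forces $\phi(f^t x^*) = \Phi(x^*,t)\,\phi(x^*)$, so choosing $\phi(x^*) = \Id$ defines $\phi$ on the dense orbit, and any continuous solution differs from it by right multiplication by a fixed group element (so uniqueness up to $\phi \cdot g$ is automatic). As in the diffeomorphism case, it then suffices to establish a uniform local H\"older estimate: there exist $\delta>0$ and $K>0$ such that whenever $d_M(f^{t+T}x^*, f^t x^*) < \delta$ one has $d_G(\Phi(f^t x^*, T), \Id) \leq K\, d_M(f^{t+T}x^*, f^t x^*)^\alpha$; the passage from this local estimate to global uniform H\"older continuity on $\mathcal{O}(x^*)$, and thence to the existence of a $C^\alpha$ extension $\phi: M \to G$, is verbatim the compactness/open-cover argument already given.

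To prove the local estimate, apply the Anosov Closing Lemma for flows (Lemma \ref{AnosovClosingFlow}) to the near-return $f^{t+T}x^* \approx f^t x^*$: for $T$ large this produces a periodic point $p$ of period $T + \Delta$ with $d_M(f^t x^*, p)$, $d_M(f^{t+T}x^*, p)$ and $|\Delta|$ all bounded by $K\, d_M(f^{t+T}x^*, f^t x^*)$, together with a point $z \in W^s_{\loc}(p) \cap W^u_{\loc}(f^t x^*)$. Then I would run the two telescoping comparisons. For the forward comparison set $C_F(s) = \Phi^{-1}(p,s)\,\Phi(z,s)$; differentiating in $s$ and using the defining ODE for $\Phi$ gives $\frac{d}{ds} C_F(s) = \Delta_{p,z}^s\bigl(\eta(f^s z) - \eta(f^s p)\bigr)$ up to the appropriate left/right translations, so
\begin{equation*}
  d_G\bigl(C_F(s), \Id\bigr) \leq \int_0^s \| \Delta_{p,z}^u \| \; \| \eta(f^u z) - \eta(f^u p) \| \, du.
\end{equation*}
Using $\|\eta(f^u z) - \eta(f^u p)\| \leq C\, d_M(f^u z, f^u p)^\alpha \leq C\, e^{-\lambda \alpha u} d_M(z,p)^\alpha$ (from H\"older continuity of $\eta$ and $z \in W^s_{\loc}(p)$), the localization bound \eqref{eq:FlowLocalization}, and the hyperbolicity condition $\rho - \lambda\alpha < 0$, the integral converges and is bounded by $C'\, d_M(f^{t+T}x^*, f^t x^*)^\alpha$, uniformly in $s$. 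Since $\Phi(p, T+\Delta) = \Id$ by the periodic orbit obstruction, evaluating near $s = T$ (and absorbing the error from $\Delta$, which is itself $O(d_M(\cdot,\cdot))$, using that $\eta$ is bounded and the cocycle is Lipschitz in $t$ on bounded orbit segments) yields $d_G(\Phi(z,T), \Id) \leq C''\, d_M(f^{t+T}x^*, f^t x^*)^\alpha$. Symmetrically, the backward comparison $C_R(s) = \Phi^{-1}(f^{t+T}x^*, -s)\,\Phi(f^T z, -s)$, using $f^T z$ and $f^{t+T} x^*$ both in a common unstable manifold and the cocycle identities $\Phi(f^T z, -T) = \Phi^{-1}(z, T)$, $\Phi^{-1}(f^{t+T}x^*, -T) = \Phi(f^t x^*, T)$, gives the corresponding bound; combining the two through the triangle inequality and \eqref{eq:RightMultiplication} (noting $|R_{\Phi(z,T)}|_1$ is uniformly bounded once $d_G(\Phi(z,T),\Id)$ is) establishes \eqref{eq:LocalHolder} for the flow and completes the proof.

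The main technical obstacle, which is absent in the diffeomorphism case, is bookkeeping the period defect $\Delta$: the closing lemma only gives a periodic orbit of period $T+\Delta$ rather than $T$, so $\Phi(p,T) \neq \Id$ in general and one must show $d_G(\Phi(p,T),\Id) = d_G(\Phi(p,T), \Phi(p, T+\Delta)) = O(|\Delta|) = O(d_M(f^{t+T}x^*, f^t x^*))$, which is more than enough since $1 \geq \alpha$ would need care but here the first power dominates the $\alpha$-power for small distances. One also has to be slightly careful that all the orbit segments involved (of length $\approx T$, which is bounded since only small returns matter, or — if one wants all returns — handled by the same open-cover reduction) stay in a fixed compact region so that the Lipschitz constants of multiplication in $G$ and the constant $K$ in \eqref{eq:UniformEstimateEta} are uniform. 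Apart from this, the argument is a direct continuous-time transcription of the preceding proof.
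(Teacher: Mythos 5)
Your proposal is correct and follows essentially the same route as the paper: reduce to a local H\"older estimate on the dense orbit, apply the flow closing lemma to get a periodic point $p$ with period defect $\Delta$ and messenger $z$, differentiate $C_F(s)=\Phi^{-1}(p,s)\Phi(z,s)$ and $C_R(s)$ to get ODEs controlled by $\Delta^s_{p,z}$, integrate using \eqref{eq:FlowLocalization} and $\rho-\lambda\alpha<0$, handle $\Phi(p,T)=\Phi(p,-\Delta)$ via $|\Delta|=O(d_M)$, and finish with the cocycle identities and triangle inequality. One small correction: your parenthetical suggesting $T$ is bounded ``since only small returns matter'' is not right---$T$ is unbounded, and the uniformity in $T$ comes precisely from the convergent integral, which is what your main estimate in fact delivers, so the aside is harmless.
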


\begin{proof}
  Let $x^* \in M$ be a point with a dense orbit $\mathcal{O}(x^*)$. If
  we fix $\phi(x^*)$ then, by \eqref{eq:FlowCoboundary}, we can define
  $\phi$ on $\mathcal{O}(x^*)$ by
  \begin{equation*}
    \phi(f^tx^*) = \Phi(x^*,t)\, \phi(x^*). 
  \end{equation*}
  Exactly as in the previous case, it suffices to show that there exist
  $\delta >0$ and $K >0$ such that 
  \begin{multline}
    \label{eq:FlowLocalHolder}
    \text{ if $d_M( f^{t+T}x^*, f^tx^*)<\delta$ then}\\
    d_G\bigl( \Phi(f^tx^*,T), \Id \bigr)  < K d_M( f^{t+T}x^*,
    f^tx^*)^\alpha. 
  \end{multline}
  Applying the Anosov Closing Lemma, Lemma \ref{AnosovClosingFlow},
  with $d_M( f^{t+T}x^*, f^tx^*)<\delta$ we obtain a periodic point $p
  \in M$ with $f^{T+\Delta}p=p$ and a point $z \in W^s(p) \cap
  W^u(f^tx^*)$. The periodic point satisfies:
  \begin{enumerate}
  \item $|\Delta| < K \, d_M( f^{t+T}x^*, f^tx^*)$.
    
  \item $d_M( f^{t+T} x^*, p) \leq K \, d_M( f^{t+T}x^*, f^tx^*)$.
    
  \end{enumerate}
  Let $C_F(s) := \Phi^{-1}(p,s) \cdot \Phi(z,s) $. Using the chain
  rule for functions of two variables we obtain
  \begin{align}\label{eq:CFdiffeq} 
    \frac{d}{ds}C_F(s) &= DL_{\Phi^{-1}(p,s)} DR_{\Phi(z,s)}\bigl[
    \eta(f^s z) - \eta(f^s p) \bigr]\\
    C_F(0)&=0 \nonumber
  \end{align}
  From the definition of $\Delta_{p,z}$ we obtain
  \begin{align*}
    \frac{d}{ds}C_F(s) &= \Delta_{p,z}^s \bigl[ \eta(f^s z) - \eta(f^s
    p) \bigr]
  \end{align*}
  and hence, for $s > 0$, we have the estimate
  \begin{align*}
    \label{eq:8}
    \Bigl\| \frac{d}{ds} C_F(s) \Bigr\| &\leq \| \Delta_{p,z}^s \|\,
    \|  \eta(f^s z) - \eta(f^s p) \| \\
    &\leq C_1 \, e^{\rho s} \| \, \eta(f^s z) - \eta(f^s p)
    \|&&\text{\eqref{eq:FlowLocalization}}\\
    &\leq C_2 \, e^{\rho s} \, d_M(f^s z, f^s p)^\alpha &&\eta \in
    C^\alpha(M,\mathfrak{g}) \\
    &\leq C_3 \,e^{(\rho - \lambda \alpha) s} \, d_M(z,p)^\alpha&&z \in W^s(p)\\
    &\leq C_4 \, e^{(\rho - \lambda \alpha) s} \,  d_M( f^{t+T}x^*,
    f^tx^*)^\alpha&&\text{Lemma \ref{AnosovClosingFlow}}
  \end{align*}
  As $C_F(0) = \Id$, using the hyperbolicity assumption
  \eqref{eq:HypCondFlow}, and the fact $d_G$ is a length metric we can
  integrate to get the following bound
  \begin{equation}
    \label{eq:9}
    d_G \bigl( C_F(s), \Id ) \leq \frac{C_4}{\lambda \alpha - \rho} d_M (
    f^{t+T}x^*, f^tx^*)^\alpha 
  \end{equation}
  and in particular
  \begin{align*}
    d_G\bigl( C_F(T), \Id)  &= d_G \bigl( \Phi^{-1}(p,T) \Phi(z,T),
    \Id)\\ 
    &\leq \frac{C_4}{\lambda \alpha - \rho} d_M ( f^{t+T}x^*,
    f^tx^*)^\alpha 
  \end{align*}
  As the periodic orbit obstruction is satisfied we have
  $\Phi(p,T+\Delta) = \Id$. By the cocycle property
  \begin{equation*}
  \Phi(p,T) = \Phi(p, -\Delta)
  \end{equation*}
  From the Anosov Closing Lemma, Lemma \ref{AnosovClosingFlow},
  we have $|\Delta|< K \, d_M ( f^{t+T}x^*, f^tx^*)$ and hence
  $\Delta$ is bounded. By compactness we can uniformly bound
  \begin{equation*}
    |L_{\Phi(p,T)}|_1=|L_{\Phi(p,-\Delta)}|_1 < C
  \end{equation*}
  Now we estimate 
  \begin{equation}
    \label{eq:FlowPhizEstimate}
    \begin{split}
      d_G \bigl( \Phi(z,T) , \Id \big) &\leq
      \begin{aligned}[t]
        &d_G \bigl( \Phi(p,T) \cdot \Phi^{-1}(p,T) \cdot \Phi(z,T), \Phi(p,T)
        \bigr)\\
        &\qquad + d_G \bigl( \Phi(p,T) , \Id \bigr)
      \end{aligned}
      \\
      & \leq |L_{\Phi(p,T)}|_1 \, d_G \bigl( C_F(T) , \Id
      \bigr) + d_G \bigl( \Phi(p,T)
      , \Id \bigr)\\
      & \leq C \,  d_M( f^{t+T}x^*, f^tx^*)^\alpha.
    \end{split}
  \end{equation}
  Since $ d_G \bigl( \Phi(z,T) , \Id \big)$ is bounded we can find a
  uniform estimate for $| R_{\Phi(z,T)} |_1$. 

  Let $C_R(s) := \Phi^{-1}(f^{t+T}x^*,-s) \Phi(f^Tz,-s)$.  We have
  \begin{align*}
   \frac{d}{ds} C_R(s) &= DL_{\Phi^{-1}(f^{t+T}x^*,-s)}
   DR_{\Phi(f^T z,-s)} \bigl[\eta(f^{t+T-s}x^*)-\eta(f^{T-s}z) \bigr] \\
   &= \Delta_{f^{t+T}x^*,f^T z}^{-s} \bigl[\eta(f^{t+T-s}x^*)-\eta(f^{T-s}z) \bigr]
  \end{align*}
  For $s>0$, we have
  \begin{align*}
    \Bigl\| \frac{d}{ds} C_R(s) \Bigr\| &\leq \|
    \Delta_{f^{t+T}x^*,f^T z}^{-s} \| \, \|
    \eta(f^{t+T-s}x^*)-\eta(f^{T-s}z)  \| \\
    &\leq C_1 \, e^{\rho s} \| \, \eta(f^{t+T-s}x^*)-\eta(f^{T-s}z)
    \|&& \text{\eqref{eq:FlowLocalization}}\\
    &\leq C_2 \, e^{\rho s} \,  d_M(f^{t+T-s}x^*,f^{T-s}z)^\alpha &&
    \eta \in C^\alpha(M,\mathfrak{g}) \\
    &\leq C_3 \, e^{(\rho - \lambda \alpha) s} \, d_M(f^{t+T}x^*,f^T
    z)^\alpha && f^Tz \in W^u (f^{t+T}x^*)\\
    &\leq C_4 \, e^{(\rho - \lambda \alpha) s} \, d_M( f^{t+T}x^*,
    f^tx^*)^\alpha &&\text{Lemma \ref{AnosovClosingFlow}}.
  \end{align*}
  By the hyperbolicity assumption we have $\rho - \lambda \alpha < 0$
  and hence we get the following bound, uniform in $s$
  \begin{equation*}
    d_G\bigl( C_R(s),\Id \bigr) \leq \frac{C_4}{\lambda \alpha - \rho}
    d_G( f^{t+T}x^*, f^tx^*)^\alpha. 
  \end{equation*}
  In particular, for $s=T$
  \begin{equation}\label{eq:FlowDTEstimate}
    d_G \bigl( C_R(T), \Id \bigr) \leq \frac{C_4}{\lambda \alpha - \rho}
    d_G( f^{t+T}x^*, f^tx^*)^\alpha. 
  \end{equation}
  Using the cocycle property we can rewrite $C_R(T)$ in the form 
  \begin{align*}
    C_R(T) &= \Phi^{-1}(f^{T+t}x^*,-T) \Phi(f^{T}z,-T)\\
    &= \Phi(f^t x^*,T) \Phi^{-1}(z,T)
  \end{align*}
  Finally using the triangle inequality, \eqref{eq:FlowPhizEstimate},
  and \eqref{eq:FlowDTEstimate} we get
  \begin{align*}
    d_G \bigl(\Phi(f^t x^*,T) , \Id \bigr) &\leq
    \begin{aligned}[t]
      &d_G \bigl( \Phi(f^t x^*,T) \Phi^{-1}(z,T) \Phi(z,T),\Phi(z,T)
      \bigr)\\ 
      & \qquad + d_G\bigl(\Phi(z,T), \Id \bigr)
    \end{aligned}
    \\
    & \leq | R_{\Phi(z,T)} |_1 \, d_G \bigl( C_R(T), \Id \bigr) +
    d_G\bigl(\Phi(z,T), \Id \bigr)\\
    & \leq C \, d_M( f^{t+T}x^*, f^tx^*)^\alpha.
  \end{align*}
  This completes the proof of Theorem \ref{thm:LivsicLieGroupFlow}. 
\end{proof}

One may deduce a version of the result for Anosov diffeomorphisms,
Theorem \ref{thm:LivsicLieGroupDiffeo}, from the one for
Anosov flows, Theorem \ref{thm:LivsicLieGroupFlow}, by a suspension
trick.  

\begin{rem}
  The hyperbolicity condition \eqref{eq:FlowLocalization} we
  require is stronger than what we actually use. We could make do with
  the more complicated condition: there exists $\rho>1$ such that 
  \begin{enumerate}
  \item for all $x \in M$, all $y \in W^s(x)$, and all $t \geq 0$
    \begin{equation*}
       \| \Delta_{x,y}^t \| \leq K e^{\rho t}
    \end{equation*}

  \item for all $x \in M$, all $y \in W^u(x)$, and all $t \leq 0$ 
    \begin{equation*}
       \| \Delta_{x,y}^t \| \leq K e^{-\rho t}
    \end{equation*}
  \end{enumerate}
  This more complicated hyperbolicity condition is useful in the case
  of commutative groups endowed with matrix norms. 
\end{rem}

\section{Verifying Localization}
\label{sec:localization}

The localization conditions \eqref{eq:DiffeomorphismLocalization} and
\eqref{eq:FlowLocalization} are formulated without any assumptions on
the metric. This has the advantage that the arguments apply equally
well to matrix norms, useful in computations in matrix Lie groups, and
to the left-invariant (or right-invariant) metrics so useful in geometric
computations. Finally these arguments also shed light on cases such as
diffeomorphism groups where the natural metric lacks the special
properties of either matrix norms or invariant metrics. 

In the case of matrix norms and invariant metrics we can easily relate 
the localization conditions \eqref{eq:DiffeomorphismLocalization} and
\eqref{eq:FlowLocalization} to properties of the generating map or
vector field. 

\subsection{Localization in Matrix Norms}
\label{sec:local-matr-norms}

\subsubsection{Diffeomorphism Case}

Let $G$ be a matrix Lie group endowed with a matrix norm. We will use
only the multiplicative property
\begin{equation*}
  \| A B \| \leq \|A \|\, \|B\|.
\end{equation*}
The operator $\Delta_{x,y}^n (g)$ satisfies 
\begin{align*}
  \| \Delta_{x,y}^n (g) \| &= \| \Phi^{-1}(x,n)\, g\, \Phi(y,n) \|\\
  &\leq \|\Phi^{-1}(x,n)\| \, \|  \Phi(y,n)\| \, \|g\|
\end{align*}
and hence we have 
\begin{align*}
  |\Delta_{x,y}^n| \leq  \|\Phi^{-1}(x,n)\| \, \|  \Phi(y,n)\|.
\end{align*}
If we let $\rho^2 = \max_{x \in M} \{ \|\eta(x)\|
,\|\eta^{-1}(x)\| \}$ then we have
\begin{equation*}
  \|\Phi(x,n)\| \leq \rho^{\frac{1}{2} |n|} \qquad \|\Phi^{-1}(x,n)\| \leq
  \rho^{\frac{1}{2} |n|} 
\end{equation*}
and hence 
\begin{equation*}
   |\Delta_{x,y}^n (g)| \leq \rho^{|n|}. 
\end{equation*}

\subsubsection{Flow Case}

Similarly, the operator $\Delta_{x,y}^t v$ satisfies 
\begin{align*}
  \| \Delta_{x,y}^t v \| &= \|  DL_{\Phi^{-1}(x,t)} DR_{\Phi(y,t)} v\|\\
  &\leq \|\Phi^{-1}(x,t)\| \, \|\Phi(y,t)\| \, \|v\|
\end{align*}
and hence we have 
\begin{align*}
  \|\Delta_{x,y}^t \| \leq  \|\Phi^{-1}(x,t)\| \, \|  \Phi(y,t)\|.
\end{align*}
If we let $2 \rho = \max_{x \in M} \{ \|\eta(x)\|
,\|\eta^{-1}(x)\| \}$ then we have
\begin{equation*}
  \|\Phi(x,t)\| \leq e^{ \frac{\rho}{2} |t|} \qquad \|\Phi^{-1}(x,t)\
  |\leq e^{ \frac{\rho}{2} |t|} 
\end{equation*}
and hence 
\begin{equation*}
   |\Delta_{x,y}^t v| \leq e^{ \rho |t|}
\end{equation*}

\subsubsection{Commutative Matrix Groups}

In the case of a commutative matrix group no localization assumption
is required. First observe that for commutative matrix groups we have 
\begin{equation*}
  |\Delta_{x,y}^n| \leq  \|\Phi^{-1}(x,n) \Phi(y,n)\|, \qquad
  \|\Delta_{x,y}^t \| \leq  \|\Phi^{-1}(x,t) \Phi(y,t)\|.
\end{equation*}
Thus the key is to estimate the quantity $\|\Phi^{-1}(x,n)
\Phi(y,n)\|$ in the case of an Anosov diffeomorphism, or the quantity
$ \|\Phi^{-1}(x,t) \Phi(y,t)\|$ in the case of an Anosov flow. 

In the case of a cocycle over an Anosov diffeomorphism we have the
evolution equation
\begin{equation*}
   \|\Phi^{-1}(x,n+1) \Phi(y,n+1)\| \leq \|\eta^{-1}(f^nx)
   \eta(f^ny)\| \,  \|\Phi^{-1}(x,n) \Phi(y,n)\|.
\end{equation*}
For $x \in M$, $y \in W^s(x)$, and $n \geq 0$ we have 
\begin{equation*}
  \|\eta^{-1}(f^nx)
   \eta(f^ny)\| \leq 1 + D \lambda^{\alpha n}. 
\end{equation*}
You can easily verify by induction that in this case for all $n \geq
0$ we have  
\begin{equation*}
  \|\Phi^{-1}(x,n) \Phi(y,n)\| \leq e^{D
    \frac{1-\lambda^(\alpha \,n)}{1-\lambda^\alpha}} 
\end{equation*}
and hence $|\Phi^{-1}(x,n) \Phi(y,n)\|< e^{\frac{D}{1-\lambda}}$. A
similar computation works when $x \in M$, $y \in W^u(x)$, and $n \leq
0$. 

In the case of flows we have the following evolution equation 
\begin{equation*}
  \frac{d}{dt} \|\Phi^{-1}(x,t) \Phi(y,t)\| \leq
  \|\eta(f^ty)-\eta(f^tx)\| \, \|\Phi^{-1}(x,t) \Phi(y,t)\|. 
\end{equation*}
For $x\in M$, $y \in W^s(x)$, and $t >0$ we have 
\begin{equation*}
  \|\eta(f^ty)-\eta(f^tx)\| < D \, e^{-\alpha \, \lambda \, t}. 
\end{equation*}
Check that in this case a version of the Gronwall inequality gives us 
\begin{equation*}
  \|\Phi^{-1}(x,t) \Phi(y,t)\| \leq e^{\frac{D}{\alpha \,\lambda} e^{
      - \alpha \, \lambda \, t}}
\end{equation*}
and hence $ \|\Phi^{-1}(x,t) \Phi(y,t)\| \leq e^{\frac{D}{\alpha
    \,\lambda}}$. A similar computation works when $x \in M$, $y \in
W^u(x)$, and $t \leq 0$.

\subsection{Localization in Right Invariant Norms}

A metric $d_G$on a topological group $G$ is called right invariant if
for all $f,g,h \in G$, $d_G(f \cdot h , g \cdot h) = d_g(f,g)$.
First observe that the local Lipshitz constant of the left
multiplication operator, and the operator norm of the differential map
of the left multiplication operator, are independent of the base point
since the metric is invariant under right multiplication and left and
right multiplication commute.

\subsubsection{Diffeomorphism Case}

If we let 
\begin{equation*}
\rho = \max_{x \in M} \max \{ |L_{\eta(x)}|_1,
|L_{\eta^{-1}(x)}|_1 \}
\end{equation*}
then by definition of $\Phi(x,n)$ we can write 
\begin{align*}
  |L_{\Phi(x,n)}|_{\rho^{-|n|}} &=
  \begin{cases}
    |L_{\eta(f^{n-1}x)} \circ \cdots \circ L_{\eta(x)}|_{\rho^{-|n|}}
    & n> 0 \\
    1& n=0\\
    |L_{\eta(f^nx)} \circ \cdots \circ L_{\eta(f^{-1}x)}|_{\rho^{-|n|}}
    & n < 0 \\
  \end{cases}\\
  &\leq\begin{cases} 
    |L_{\eta(f^{n-1}x)}|_1 \cdots |L_{\eta(x)}|_1 & n> 0 \\
    1& n=0\\
    |L_{\eta^{-1}(f^nx)}|_1 \cdots |L_{\eta^{-1}(f^{-1}x)}|_1 & n < 0 \\
  \end{cases}\\
  &\leq \rho^{|n|}.
\end{align*}
Since $R_g$ is an isometry the same estimate holds for our operator
$\Delta_{x,y}^n$ so
\begin{equation*}
  |\Delta_{x,y}^n|_{\rho^{-|n|}} \leq \rho^{|n|}.
\end{equation*}

\subsubsection{Flow Case}

Observe that since the metric is right invariant 
\begin{equation*}
   \| \Delta_{x,y}^t \| = \|D_{e}L_{\Phi^{-1}(x,t)}\|. 
\end{equation*}
If we let 
\begin{equation*}
  \rho := \max_{x \in M} \max_{t \in [-1,1]} \log \|D
  L_{\Phi^{-1}(x,t)} \|
\end{equation*}
then 
\begin{equation*}
  \| \Delta_{x,y}^t \| \leq e^{\rho \lceil |t| \rceil}  \leq
  e^{\rho} e^{\rho |t|}. 
\end{equation*}

\subsubsection{Commutative Group}

In a commutative group a right invariant metric is simply invariant
and hence the operators $\Delta_{x,y}^n$ and $\Delta_{x,y}^t$ are
isometries. Hence we can take $\rho=0$ and the hyperbolicity
conditions are automatically satisfied.

\section{Liv\v{s}ic Theory in Diffeomorphism Groups}
\label{sec:diffeomorphismgroups}

\subsection{Preliminaries on Diffeomorphism Groups}. 

We will recall some of the standard material on global analysis, see
for example \cite{Banyaga97}. 

We consider cocycles taking values in the group of $C^r$
diffeomorphisms of a compact Riemannian manifold $N$. 
The group operation is composition. As it is
well known, the group operation is continuous 
but not differentiable \cite{delaLlaveO99}. 
Hence the previous results do not apply directly. 
Nevertheless, we will see that the rough lines of 
the technique can be applied, but we get some lower
regularity of the solutions. 

The group of $C^r$ diffeomorphisms of a compact Riemannian manifold
$N$ has the structure of a Banach manifold modeled on the space
$T_hC^r(N,N)$, defined by
\begin{equation*}
  T_hC^r(N,N) = \{ v \in C^r(N,TN) : \pi_N \circ v = h \}.
\end{equation*}

We will endow with $\diff^r(N)$ with a length metric induced from the
Riemannian structure on $N$. Given $h \in \diff^r(N)$ and $y \in N$
there exists a neighborhood $U \subset T_y N$ sufficiently small that
a local representative $\tilde h_y : U \rightarrow T_{h(y)}N$ is
uniquely defined by
\begin{equation*}
  h \bigl( \exp_y v \bigr) = \exp_{h(y)} \bigl(\tilde h_y (v)\bigr).
\end{equation*}
Since $\tilde h_y$ is defined between Banach spaces we can
differentiate it in the usual manner. We will always use $D$ to denote
differentiation in the manifold $N$.

We thus obtain
\begin{equation*}
  D^nh(y) := D^n\tilde h(0) : (T_yN)^{\otimes n} \rightarrow T_{h(y)} N.
\end{equation*}
The derivative produced in this fashion coincides with the usual
notion of covariant derivative defined by the Levi-Civita
connection. 

When dealing with a smooth curve $h: \R \rightarrow \diff^r(N)$ we
modify this idea slightly. For any $s \in \R$ and any $y \in N$ there
exists a neighborhood $V$ of $s$ and $0 \in U \subset T_y N$ such
that for any $t \in V$ the local representative $\tilde h(t)_y : U
\subset T_yN \rightarrow T_{h(s)(y)}N$ is defined uniquely by
\begin{equation*}
  h(t) \bigl( \exp_y v \bigr) = \exp_{h(s)(y)} \bigl(\tilde h(t)_y (v)\bigr).
\end{equation*}
We may therefore differentiate with respect to $t$ to obtain 
\begin{equation*}
  \frac{d}{dt} \tilde h(t)_y \Bigr|_{t=s}:  U \subset T_yN \rightarrow
  T_{h(s)(y)}N.
\end{equation*}
We declare 
\begin{equation*}
  \frac{d}{ds} D^n_y h(s) := D^n \frac{d}{dt} \tilde
  h(t)_y  \Bigr|_{t=s} (0).
\end{equation*}

\subsection{Metric on $\diff^r(N)$}

Let $p : [0,1] \rightarrow \diff^r(N)$ be a piecewise $C^1$ path. This
is equivalent to $ \frac{d}{ds} D^n p_s $ piecewise continuous in $s$
for $0\leq n \leq r$.  We can define the length of such a piecewise
$C^1$ path by
\begin{equation*}
  \ell_r(p) = \max_{0\leq n \leq r} \max_{y \in N} \int_0^1
  \|\frac{d}{ds}D^n_y p_s  \| \, ds  
\end{equation*}
where the norm is the appropriate operator norm induced by the
Riemannian metric. If we compute the length of only a part of the path
then we write
\begin{equation*}
  \ell_r(p;s) = \max_{0\leq n \leq r} \max_{y \in N} \int_0^s
  \|\frac{d}{dt}D^n_y p_t \|\, dt  
\end{equation*}

When we wish to compute $\frac{d}{dt} \|D_y^k p_t \|$ we can use the
local representative but must take care to consider the lift of the
appropriate norm. Let $p \in N$ be an arbitrary point and $q$ close
enough to $p$ that we may consider the lift of $q$ to the neighborhood
$U \subset T_p N$ on which the local representative is defined. Since
the Riemannian metric is smooth we can find a globally defined
constant $\kappa>0$, depending on the Riemannian metric, such that
\begin{equation}
  \label{eq:GeometricEstimate}
  \| \cdot \|_{q} \leq \| \cdot \|_p + \kappa \, \| \cdot \|_p \, d_N(q,p). 
\end{equation}
In fact we will only need the infinitesimal version of this.

Notice
\begin{equation*}
   \ell_0(p) = \max_{y \in N} \int_0^1 \|\frac{d}{ds}p_s(y)\|_{p_s(y)} \, ds
\end{equation*}
is precisely the maximum over all $y \in N$ of the usual length of the
path $p_s(y)$ in $N$. We use this length structure on $\diff^r(N)$ to
induce a metric by defining
\begin{equation}
  \label{eq:drDefn}
  d_r(g,h) := \inf_{p \in \mathfrak{P}}  \max \{\ell(p),\ell(p^{-1}) \}
\end{equation}
where 
\begin{equation*}
  \mathfrak{P} = \bigl\{ p \in C^1_\mathrm{pw} \bigl( [0,1] , \diff^r(N)
  \bigl), p_0=g, p_1=h \bigr\}.
\end{equation*}
Notice that our definition has the symmetry property 
\begin{equation*}
  d_r(g,h) =  d_r(g^{-1},h^{-1}).
\end{equation*}
It is worth noticing that for paths which connect a diffeomorphism $f$
to the identity $\ell_0(p)=\ell_0(p^{-1})$.  Furthermore, if $f$ and $g$
are sufficiently $C^1$ close there is a standard way of producing an
interpolating path, namely
\begin{equation*}
  p_s(y) := \exp_{f(y)}\bigl( s \exp_{f(y)}^{-1} g(y) \bigr). 
\end{equation*}
Since geodesics are locally distance minimizing for $f$ and $g$
sufficiently close this path is the path along which $d_0$ is
minimized. In this case, we have 
\begin{equation*}
  d_0(f,g) = \max \bigl\{ \max_{y \in N} d_N\bigl(f(y),g(y)\bigr), \max_{y
  \in N} d_N\bigl(f^{-1}(y),g^{-1}(y)\bigr) \bigr\}.
\end{equation*}
If $f$ and $g$ are not sufficiently close then this may no longer
necessarily true.

Our first lemma gives explicit estimates on the size of derivatives in
an interpolating path. 

\begin{lemma}\label{lem:PathBounds}
  For all piecewise $C^1$ paths $p : \R
  \rightarrow \diff^r(N)$, all $1 \leq k \leq r$, and all $s >0 $
  \begin{equation*}
    \|D^k p_s \| \leq e^{\kappa \,  \ell_0(p;s)}  \, \bigl(
    \ell_k(p;s) + \|D^k p_0\| \bigr) 
  \end{equation*}
  where the norms are the operator norms for $D^k p_s, D^k p_0 :
  (TN)^{\otimes k} \rightarrow TN$, and $\kappa$ is the constant from
  \eqref{eq:GeometricEstimate}. In particular
  \begin{equation*}
    \|D p_s \|_{r-1} \leq e^{\kappa \,  \ell_0(p;s)}  \, \bigl(
    \ell_r(p;s) + \|D p_0\|_{r-1} \bigr) 
  \end{equation*}
\end{lemma}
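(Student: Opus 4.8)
The plan is to prove the pointwise bound $\|D^k p_s\| \leq e^{\kappa\,\ell_0(p;s)}\bigl(\ell_k(p;s) + \|D^k p_0\|\bigr)$ by a Gronwall-type differential inequality in the parameter $s$, using the local-representative description of derivatives set up above. Fix a point $y \in N$ and work with the local representatives $\tilde p(t)_y$ near each $t=s$; the quantity to control is $u(s) := \|D^k_y p_s\|$, where the operator norm on $(T_yN)^{\otimes k} \to T_{p_s(y)}N$ is taken in the Riemannian metric at the moving basepoint $p_s(y)$. The subtlety — and the reason the factor $e^{\kappa\,\ell_0(p;s)}$ appears — is precisely that this target fiber moves as $s$ varies, so differentiating $u(s)$ picks up two contributions: one from the genuine change of the derivative tensor $D^k p_s$, and one from the $s$-dependence of the metric along the curve $s \mapsto p_s(y)$.

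First I would differentiate $u(s)$. The "genuine" term is governed by $\frac{d}{ds}D^k_y p_s = D^k\bigl(\frac{d}{dt}\tilde p(t)_y|_{t=s}\bigr)(0)$, which is exactly the integrand appearing in $\ell_k(p;s)$; its operator norm is therefore bounded by the derivative of $s \mapsto \ell_k(p;s)$ (more precisely, by $\max_{0\le n\le r}$ of such integrands, which only helps). The metric-variation term is controlled by the infinitesimal version of \eqref{eq:GeometricEstimate}: moving the basepoint along $p_s(y)$ at speed $\|\frac{d}{ds}p_s(y)\|$ changes the norm by a factor whose logarithmic derivative is at most $\kappa\,\|\frac{d}{ds}p_s(y)\|$, and $\int_0^s \|\frac{d}{dt}p_t(y)\|\,dt \le \ell_0(p;s)$ by definition of $\ell_0$. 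Putting these together yields a differential inequality of the schematic form
\begin{equation*}
  \frac{d}{ds}u(s) \;\leq\; \kappa\,\Bigl\|\tfrac{d}{ds}p_s(y)\Bigr\|\,u(s) \;+\; \Bigl\|\tfrac{d}{ds}D^k_y p_s\Bigr\|.
\end{equation*}

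Then I would integrate this by the standard Gronwall argument: writing $w(s) := \exp\bigl(-\kappa\int_0^s\|\frac{d}{dt}p_t(y)\|\,dt\bigr)u(s)$, one gets $w'(s) \le e^{-\kappa\int_0^s \cdots}\,\|\frac{d}{ds}D^k_y p_s\|$, so $w(s) \le w(0) + \int_0^s \|\frac{d}{dt}D^k_y p_t\|\,dt \le \|D^k_y p_0\| + \ell_k(p;s)$, and multiplying back by $e^{\kappa\int_0^s\|\frac{d}{dt}p_t(y)\|\,dt} \le e^{\kappa\,\ell_0(p;s)}$ gives $u(s) \le e^{\kappa\,\ell_0(p;s)}\bigl(\ell_k(p;s) + \|D^k_y p_0\|\bigr)$. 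Taking the maximum over $y \in N$ and over $0 \le k \le r$ (for the "in particular" statement, noting $\|Dp_s\|_{r-1} = \max_{1\le k\le r}\|D^k p_s\|$ and $\ell_r(p;s) = \max_{0\le n\le r}\int\|\frac{d}{dt}D^n_y p_t\|$ dominates each $\ell_k(p;s)$) finishes the proof. The main obstacle is the bookkeeping in the first step — carefully justifying that differentiating the moving-basepoint norm produces exactly the two terms above, with the correct constant $\kappa$ coming from \eqref{eq:GeometricEstimate}, and that the local representatives can be patched consistently as $s$ ranges over $[0,s]$ (handled by compactness of $[0,s]$ and the piecewise-$C^1$ hypothesis, which allows the estimate to be run on each smooth piece and concatenated).
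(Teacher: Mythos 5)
Your proposal is correct and follows essentially the same route as the paper's own proof: pass to the local representative, differentiate the moving-basepoint norm to obtain the two-term differential inequality (the $\kappa\|\frac{d}{ds}p_s(y)\|$ metric-variation term from the infinitesimal form of \eqref{eq:GeometricEstimate} plus the $\|\frac{d}{ds}D^k_y p_s\|$ term), apply Gronwall, and take suprema. The only cosmetic difference is that the paper fixes a unit vector $v\in (T_yN)^{\otimes k}$ and runs the argument on $\|D^k_y p_t v\|_{p_t(y)}$ before taking the sup over $v$ and $y$, whereas you work directly with the operator norm $u(s)=\|D^k_y p_s\|$; both are fine.
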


\begin{proof}
  Let $0 \leq k \leq r$. Let $y \in N$ and $v \in (T_yN)^{\otimes k}$
  be arbitrary. The estimate \eqref{eq:GeometricEstimate} gives us
   \begin{multline*}
     \| D^k_y p_{t+\epsilon} v \|_{p_{t + \epsilon}(y)}\\
     \leq \| D^k_y p_{t+\epsilon} v \|_{p_t(y)} + \kappa \, \| D^k_y
     p_{t+\epsilon} v \|_{p_t(y)} \, d_N \bigl( p_{t+ \epsilon}(y),
     p_t(y) \bigr)
   \end{multline*}
   and hence 
   \begin{align*}
     \| D^k_y p_{t+\epsilon} v \|_{p_{t + \epsilon}(y)} - \|D^k_y
     p_t v\|_{p_t(y)} 
     &\leq \| D^k_y p_{t+\epsilon} v \|_{p_t(y)} - \|D^k_y p_t v\|_{p_t(y)}\\
     &+ \kappa \, \| D^k_y p_{t+\epsilon} v \|_{p_t(y)} \, d_N \bigl( p_{t+
       \epsilon}(y), p_t(y) \bigr).
   \end{align*}
   Since the terms on the right hand side have the same base point
   the triangle inequality applies 
   \begin{equation*}
      \| D^k_y p_{t+\epsilon} v \|_{p_t(y)} - \|D^k_y p_t v\|_{p_t(y)}
      \leq  \| D^k_y p_{t+\epsilon} v -D^k_y p_t v\|_{p_t(y)}
   \end{equation*}

   Dividing by $\epsilon$ and taking the limit we obtain 
   \begin{equation*}
     \frac{d}{dt} \|D_y^k p_t v\|_{p_t(y)} \leq \| \frac{d}{dt} D^k_y
     p_t v \|_{p_t(y)} + \kappa \, \|D_y^k p_t v\|_{p_t(y)} \|\frac{d}{dt}
     p_t(y) \|_{p_t(y)}.   
   \end{equation*}
   The classical Gronwall inequality therefore gives us
   \begin{align*}
     \|D_y^k p_s v\|_{p_s(y)} &\leq e^{ \kappa \, \int_0^s \|\frac{d}{dt}
       p_t(y) \|_{p_t(y)}\, dt } \, \bigl( \int_0^s \| \frac{d}{dt}
     D^k_y
     p_t v \|_{p_t(y)} \, dt + \|D^k_y p_0 v \|_{p_0(y)} \bigr) \\
     &\leq e^{\kappa \, \ell_0(p;s)} \, \bigl( \ell_k(p;s) + \|D^k_y p_0 v
     \|_{p_0(y)} \bigr)
   \end{align*}
   Finally we take a supremum over all $v \in (T_yN)^{\otimes k}$ with
   $\|v\|_y = 1$ and then a supremum over $y \in N$.
\end{proof}

Our second lemma contains the central part of a version of the mean
value theorem for our metric. 

\begin{lemma}\label{lem:TechnicalMVT}
 Let $p \in C^1\bigl( [0,1], \diff^{r-1}(N) \bigr)$ and let $h \in
 \diff^r(N)$. Then, 
 \begin{equation*}
   \ell_{r-1}(h \circ p_s) \leq C\, \|D h\|_{r-1} \, (1+\max_{s \in [0,1]}\|D
   p_s\|_{r-2})^{r-1}\, \ell_{r-1}(p_s). 
 \end{equation*}
 Let $p \in C^1 \bigl( [0,1], \diff^{r}(N) \bigr)$ and let $h \in
 \diff^r(N)$. Then,
 \begin{equation*}
   \ell_r(p_s \circ h) \leq C \, \max_{k_1, \cdots, k_r} \|D^1
   h\|^{k_1} \cdots \|D^r h\|^{k_r} \, \ell_r(p_s)
 \end{equation*}
 where the max is taken over all $k_1, \dots, k_r \geq 0$ such that
 \begin{equation*}
   k_1 + 2 k_2 + \cdots + r k_r \leq r.
 \end{equation*}
 Crudely, this may be estimated by
 \begin{equation*}
   \ell_r(p_s \circ h) \leq C \, ( 1+ \|D h\|_{r-1})^r\, \ell_r(p_s) .
 \end{equation*}
 In each case, the constant $C$ depends on $r$. 
\end{lemma}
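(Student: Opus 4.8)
\textbf{Proof proposal for Lemma \ref{lem:TechnicalMVT}.}

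The plan is to compute the relevant derivatives using the Fa\`a di Bruno formula (the chain rule for higher derivatives) and then bound each term appearing in it. The key observation is that $\ell_{r-1}$ and $\ell_r$ are built from expressions of the form $\frac{d}{ds} D^n_y(\text{path})$, so in each case I must differentiate a composition first with respect to the manifold variable (up to order $r$ or $r-1$) and then once with respect to the path parameter $s$, and estimate the operator norm of the result uniformly in $y$.

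For the first estimate, I would write $q_s = h \circ p_s$ and apply the Leibniz/Fa\`a di Bruno expansion to $D^n q_s$ for $0 \le n \le r-1$: this is a sum of terms of the shape $(D^j h)(p_s(y)) \cdot (D^{i_1} p_s \otimes \cdots \otimes D^{i_j} p_s)$ with $i_1 + \cdots + i_j = n$. Differentiating in $s$, the $s$-derivative either lands on the outer factor $D^j h$ — but $h$ is independent of $s$, so this contributes via the chain rule a factor $\frac{d}{ds}p_s$ paired with $D^{j+1}h$, which is where $\|Dh\|_{r-1}$ (and crucially not $\|D^r h\|$) is used, since $j+1 \le n+1 \le r$ — or it lands on one of the inner factors $D^{i_k}p_s$, producing $\frac{d}{ds}D^{i_k}p_s$, which is exactly a term controlled by $\ell_{r-1}(p_s)$. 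Collecting: the outer derivative of $h$ contributes at most $\|Dh\|_{r-1}$, the surviving undifferentiated inner factors contribute the $(1 + \max_s \|Dp_s\|_{r-2})^{r-1}$ bound (note only derivatives of $p_s$ of order $\le r-2$ appear undifferentiated, since at least one inner factor carries the $s$-derivative and thereby is absorbed into $\ell_{r-1}$), and the combinatorial count of terms is absorbed into $C = C(r)$. One must also account for the base-point dependence of the norms via \eqref{eq:GeometricEstimate}, but as in Lemma \ref{lem:PathBounds} this only contributes bounded multiplicative constants.

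For the second estimate, set $q_s = p_s \circ h$ and again expand $D^n q_s$ by Fa\`a di Bruno: here the inner map $h$ is the one being iterated, so $D^n q_s$ is a sum of terms $(D^j p_s)(h(y)) \cdot (D^{i_1}h \otimes \cdots \otimes D^{i_j}h)$ with $i_1 + \cdots + i_j = n \le r$. Now the $s$-derivative can only land on the outer factor $D^j p_s$ (since $h$ is $s$-independent), yielding $\frac{d}{ds}D^j p_s$, which is controlled by $\ell_r(p_s)$ since $j \le n \le r$. The factors $D^{i_1}h, \dots, D^{i_j}h$ remain, with exponents recording how many times each order of derivative of $h$ occurs; writing $k_m$ for the number of indices $i_\ell$ equal to $m$, the constraint $\sum_\ell i_\ell \le r$ becomes exactly $\sum_m m\,k_m \le r$, giving the stated refined bound, and the crude bound follows by $\|D^m h\| \le 1 + \|Dh\|_{r-1}$ together with $\sum k_m \le r$.

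The main obstacle is bookkeeping rather than any genuine difficulty: one must be careful that in the first estimate the differentiation in $s$ is forced onto an inner factor (or onto $h$ only through the chain rule, never raising the order of $h$ beyond $r$), so that no $\|D^r h\|$-dependence sneaks in and so that the undifferentiated $p_s$-factors genuinely only involve derivatives of order $\le r-2$; and one must handle the non-flatness of $N$ by invoking \eqref{eq:GeometricEstimate} exactly as in the proof of Lemma \ref{lem:PathBounds} when converting pointwise norm estimates at varying base points into the uniform operator-norm bounds defining $\ell$. Once the Fa\`a di Bruno expansion is written out and these two points are tracked, the constants depending only on $r$ fall out of the finitely many terms in the expansion.
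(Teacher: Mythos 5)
Your overall plan — Fa\`a di Bruno on $D^n(h\circ p_s)$ resp.\ $D^n(p_s\circ h)$, then distribute the $s$-derivative and bound the surviving factors — is exactly the paper's argument, and the case split (derivative lands on an inner $p_s$-factor versus on the outer $h$ via the chain rule, producing one extra $\frac{d}{ds}p_s$) is the same. So in structure the proposal is correct.

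There is, however, a bookkeeping muddle in your parenthetical claims about which derivative orders appear, and it is worth fixing because it touches the very point you flag as "the main obstacle." In the first estimate you assert that $\|D^r h\|$ is "crucially not" used; but when the $s$-derivative lands on the outer factor with $n=r-1$ and $j=r-1$, the chain rule produces $D^{j+1}h=D^r h$, so the $r$-th derivative of $h$ \emph{does} appear. It is still covered by the stated bound because in the paper's subscript convention $\|Dh\|_{r-1}$ means $\max_{1\le j\le r}\|D^j h\|$ (see how $\|Dp_s\|_{r-1}$ is used in Lemma \ref{lem:PathBounds}). Similarly, your claim that "only derivatives of $p_s$ of order $\le r-2$ appear undifferentiated" and the supporting reason ("at least one inner factor carries the $s$-derivative") are both off: precisely in the chain-rule case no pre-existing inner factor is differentiated, and one can have $D^{r-1}p_s$ sitting undifferentiated. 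Again this is harmless, because $\|Dp_s\|_{r-2}$ in the paper's convention bounds $\|D^j p_s\|$ for $1\le j\le r-1$. In short: your expansion and case analysis are right, the final bounds are the right ones, but the "why the orders stay low" explanation is shifted by one relative to the paper's norm-index convention; to make the write-up airtight, just say that all $D^j h$ appearing satisfy $j\le r$ (hence are covered by $\|Dh\|_{r-1}$) and all undifferentiated $D^j p_s$ satisfy $j\le r-1$ (hence are covered by $\|Dp_s\|_{r-2}$), without invoking the false claim that an inner factor is always the one differentiated.
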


\begin{proof}
  To determine $\ell_r( p_s \circ h)$ we need to compute
  $\frac{d}{ds}D^n[ p_s \circ h]$ for $0 \leq n \leq r$. We apply the
  Fa\`a di Bruno formula to $p_s \circ h$ to obtain
 \begin{equation*}
   D^n [p_s \circ h] = \sum_{k_1, \dots, k_n} D^k p_s \circ h \cdot [
   D^1h^{\otimes k_1} \otimes \cdots \otimes D^nh^{\otimes k_n}] 
 \end{equation*}
 where $ k= k_1 + \cdots + k_n$ and the sum is taken over all $k_1,
 \dots, k_n$ such that $k_1 + 2 k_2 + \cdots + n k_n =
 n$. We use $\circ$ to denote composition in the base space $N$ and
 $\cdot$ to indicate composition (multiplication) in the space of
 linear operators. Differentiating with respect to $s$, we obtain
 \begin{equation*}
   \frac{d}{ds} D^n [p_s \circ h] = \sum_{k_1, \dots, k_n} C_{k_1,
     \dots, k_n}\,\frac{d}{ds}
   D^k p_s \circ h \cdot \bigl[ D^1h^{\otimes k_1} \otimes \cdots \otimes
   D^nh^{\otimes k_n} \bigr]. 
 \end{equation*}
 We have the following estimate
 \begin{align*}
   \bigl\| D^1h^{\otimes k_1} \otimes \cdots \otimes
   D^nh^{\otimes k_n} \bigr\| &= \|D^1h\|_0^{k_1}\,\cdots\,
   \|D^nh\|_0^{k_n}\\
   &\leq (1+\|Dh\|_{n-1})^n\\
   &\leq ( 1 + \|Dh\|_{r-1})^r .
 \end{align*}
 Since 
 \begin{equation*}
   \int_0^1 \| \frac{d}{ds}
   D^k p_s \circ h \| \, ds \leq \ell_k( p_s) \leq \ell_n (p_s) \leq \ell_r(p_s)
 \end{equation*}
 we combine our estimates to obtain
 \begin{equation*}
  \ell_r(p_s \circ h) \leq C \, (1+ \|D h\|_{r-1})^r \ell_r (p_s).
 \end{equation*}
 The constant $C$ depends only on $r$. 

 To determine $\ell_{r-1}( h \circ p_s)$ we need to compute
  $\frac{d}{ds}D^n[ h \circ p_s]$ for $0 \leq n \leq r-1$. We apply the
  Fa\`a di Bruno formula to $h \circ p_s$ to obtain
 \begin{equation*}
   D^n [h \circ p_s] = \sum_{k_1, \dots, k_n}  C_{k_1,
     \dots, k_n}\, D^k h \circ p_s \cdot [
   D^1p_s^{\otimes k_1} \otimes \cdots \otimes D^np_s^{\otimes k_n}] 
 \end{equation*}
 where $ k= k_1 + \cdots + k_n$ and the sum is taken over all $k_1,
 \dots, k_n$ such that $k_1 + 2 k_2 + \cdots + n k_n =
 n$. Differentiating with respect to $s$, we obtain
 \begin{multline*}
  \frac{d}{ds} D^n [h \circ p_s] = \sum_{k_1, \dots, k_n} \bigl[
  D^{k+1} h \circ p_s \cdot [ \frac{d}{ds}p_s \otimes
   D^1p_s^{\otimes k_1} \otimes \cdots \otimes D^np_s^{\otimes k_n}] +\\
   D^k h \circ p_s \cdot \frac{d}{ds}[ D^1p_s^{\otimes k_1} \otimes
   \cdots \otimes D^np_s^{\otimes k_n}] \bigr]
 \end{multline*}
 The term
 \begin{equation*}
   \frac{d}{ds}[ D^1p_s^{\otimes k_1} \otimes \cdots \otimes
   D^np_s^{\otimes k_n}]
 \end{equation*}
 consists of $k$ terms, each of which has a single term of the form
 $\frac{d}{ds} D^l p_s$ and thus can be estimated by 
 \begin{equation*}
   \|D^1p_s\|^{k_1} \cdots \|D^l p_s \|^{k_l -1} \cdots
   \|D^np_s\|^{k_n} \, \bigl\| \frac{d}{ds} D^lp_s \bigr\|.
 \end{equation*}
 As above we can estimate 
 \begin{multline*}
   \|D^1p_s\|^{k_1} \cdots \|D^l p_s \|^{k_l -1} \cdots
   \|D^np_s\|^{k_n}\\
   \leq (1 + \max_{s \in [0,1]} \| Dp_s\|_{n-1} )^n \leq (1 + \max_{s
     \in [0,1]} \| Dp_s\|_{r-2} )^{r-1}
 \end{multline*}
 and 
 \begin{equation*}
   \int_0^1 \bigl\| \frac{d}{ds} D^lp_s \bigr\| \, ds \leq \ell_{r-1}(p_s)
 \end{equation*}
 for $0 \leq l \leq r-1$. Finally 
 \begin{equation*}
   \| D^{k+1} h \circ p_s \| ,  \|D^{k} h \circ p_s\| \leq \|D
   h\|_{r}.  
 \end{equation*}
 Combining these estimates we get the required result.
\end{proof}

Finally we can combine these two to give a more convenient form for
the mean value theorem. 

\begin{lemma}\label{lem:MVT}
  Let $C>0$ and $r \in \N$ be arbitrary. Suppose $h \in \diff^r(N)$ and
  $g_1, g_2 \in \diff^{r-1}(N)$. There exists a constant $C'>0$ such
  that if 
  \begin{equation*}
    d_r(h, \Id) < C,\quad d_{r-1}(g_1,\Id) <C,  \quad d_{r-1}(g_2,\Id)<C
  \end{equation*}
  then   
  \begin{align*}
    d_{r-1}(h \circ g_1, h \circ g_2) &< C' \, d_{r-1} (g_1,g_2),\\
    d_{r-1}(g_1 \circ h, g_2 \circ h) &< C' \, d_{r-1} (g_1,g_2).
  \end{align*}
  The constant $C'$ depends on $C$, $r$, and the manifold $N$. 
\end{lemma}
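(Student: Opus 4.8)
The plan is to read off both inequalities from the definition \eqref{eq:drDefn} of $d_{r-1}$. Fix $\epsilon\in(0,1]$ and choose a piecewise $C^1$ path $p:[0,1]\to\diff^{r-1}(N)$ with $p_0=g_1$, $p_1=g_2$ and $\max\{\ell_{r-1}(p),\ell_{r-1}(p^{-1})\}<d_{r-1}(g_1,g_2)+\epsilon$. Then $s\mapsto h\circ p_s$ is a path from $h\circ g_1$ to $h\circ g_2$ whose pointwise inverse is $s\mapsto p_s^{-1}\circ h^{-1}$, so
\[
  d_{r-1}(h\circ g_1,h\circ g_2)\le\max\bigl\{\ell_{r-1}(h\circ p_s),\,\ell_{r-1}(p_s^{-1}\circ h^{-1})\bigr\},
\]
and likewise $d_{r-1}(g_1\circ h,g_2\circ h)\le\max\{\ell_{r-1}(p_s\circ h),\,\ell_{r-1}(h^{-1}\circ p_s^{-1})\}$. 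Hence it suffices to bound each of these four lengths by a constant depending only on $C,r,N$ times $\ell_{r-1}(p)$ or $\ell_{r-1}(p^{-1})$, and then let $\epsilon\to0$. The left compositions $h\circ p_s$ and $h^{-1}\circ p_s^{-1}$ are handled directly by the first inequality of Lemma~\ref{lem:TechnicalMVT}; the right compositions $p_s\circ h$ and $p_s^{-1}\circ h^{-1}$ are handled by the argument proving the second inequality of Lemma~\ref{lem:TechnicalMVT} run with $r$ replaced by $r-1$, which only uses $D^kp_s$ for $k\le n\le r-1$ and therefore applies verbatim to $\diff^{r-1}(N)$-valued paths, giving $\ell_{r-1}(p_s\circ h)\le C_0(1+\|Dh\|_{r-2})^{r-1}\ell_{r-1}(p_s)$ and the same with $h$ replaced by $h^{-1}$, where $C_0=C_0(r)$.

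The real work is to control uniformly the derivative norms that enter these estimates. Applying Lemma~\ref{lem:PathBounds} to a near-minimizing $d_r$-path from $\Id$ to $h$ (respectively to $h^{-1}$, noting $d_r(h^{-1},\Id)=d_r(h,\Id)<C$), and using that $\|D^1\Id\|=1$ while $\|D^k\Id\|=0$ for $k\ge2$ (the identity endomorphism field is parallel), one gets $\|Dh\|_{r-1},\|Dh^{-1}\|_{r-1}\le e^{\kappa(C+1)}(C+2)=:M_1$; the same argument with $r-1$ in place of $r$ gives $\|Dg_i\|_{r-2},\|Dg_i^{-1}\|_{r-2}\le M_1$ for $i=1,2$. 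Since $d_{r-1}$ is symmetric and satisfies the triangle inequality, $d_{r-1}(g_1,g_2)<2C$, so our near-optimal path $p$ has $\ell_0(p)\le\ell_{r-1}(p)<2C+1$ and likewise for $p^{-1}$; Lemma~\ref{lem:PathBounds} (with $r-1$ in place of $r$), applied to $p$ and to $p^{-1}$ with $\|Dp_0\|_{r-2}=\|Dg_1\|_{r-2}\le M_1$ and $\|D(p^{-1})_0\|_{r-2}=\|Dg_1^{-1}\|_{r-2}\le M_1$, then yields $\max_s\|Dp_s\|_{r-2},\ \max_s\|Dp_s^{-1}\|_{r-2}\le e^{\kappa(2C+1)}(2C+1+M_1)=:M_2$.

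Assembling the pieces, Lemma~\ref{lem:TechnicalMVT} gives $\ell_{r-1}(h\circ p_s)\le C_0M_1(1+M_2)^{r-1}\ell_{r-1}(p)$ and $\ell_{r-1}(h^{-1}\circ p_s^{-1})\le C_0M_1(1+M_2)^{r-1}\ell_{r-1}(p^{-1})$, while the $r\to r-1$ version of its second part gives $\ell_{r-1}(p_s\circ h),\ \ell_{r-1}(p_s^{-1}\circ h^{-1})\le C_0(1+M_1)^{r-1}\max\{\ell_{r-1}(p),\ell_{r-1}(p^{-1})\}$. Substituting into the displayed upper bounds for the two target metrics, using $\ell_{r-1}(p),\ell_{r-1}(p^{-1})<d_{r-1}(g_1,g_2)+\epsilon$, and letting $\epsilon\to0$, we obtain both inequalities with $C'=2C_0\max\{M_1(1+M_2)^{r-1},(1+M_1)^{r-1}\}$, which depends only on $C$, $r$, and $N$ (through $\kappa$); the factor $2$ absorbs the passage from $\le$ to the strict inequality stated (the statement is of course vacuous when $g_1=g_2$).

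I expect the only genuinely delicate point — the rest is bookkeeping — to be that Lemma~\ref{lem:TechnicalMVT} as stated supplies the right-composition estimate only at level $\ell_r$ and under the hypothesis that the inner diffeomorphism is $C^r$, whereas our moving path $p$ lies only in $\diff^{r-1}(N)$; one must check that the Fa\`a di Bruno computation in that proof never differentiates $p_s$ more than $r-1$ times, so it descends to the $\ell_{r-1}$ estimate for $\diff^{r-1}(N)$-valued paths. A secondary point to keep in mind is that Lemmas~\ref{lem:PathBounds} and~\ref{lem:TechnicalMVT}, though stated for $C^1$ (resp.\ piecewise $C^1$) paths, apply to the piecewise $C^1$ paths appearing in \eqref{eq:drDefn} because their proofs are local in the parameter $s$ (Gronwall, resp.\ term-by-term differentiation) and so run on each smooth piece and add up.
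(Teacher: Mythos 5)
Your proof is correct and follows essentially the same approach as the paper's: take a near-optimal $d_{r-1}$-path $p$ from $g_1$ to $g_2$, bound the lengths of the left- and right-composed paths and their pointwise inverses using Lemma~\ref{lem:TechnicalMVT} (with the $r\to r-1$ adjustment for the right-composition estimate, which the paper also uses implicitly), and control the derivative norms that enter by Lemma~\ref{lem:PathBounds}. The only cosmetic difference is that the paper proves only the first inequality this way and obtains the second as an immediate consequence of the symmetry $d_{r-1}(g,h)=d_{r-1}(g^{-1},h^{-1})$ together with $(g_i\circ h)^{-1}=h^{-1}\circ g_i^{-1}$, whereas you estimate both compositions directly and also track explicit constants.
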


\begin{proof}
  Since $d_{r-1} (g_1,g_2) < 2 C$ we may take a path $p_s$ joining
  $g_1$ to $g_2$ with $\ell_{r-1}(p_s) < 2 C$ and $
  \ell_{r-1}(p_s^{-1})<2 C$. Using Lemma \ref{lem:PathBounds} we see
  that there exists $C_1>0$ with $\|D p_s\|_{r-2} < C_1$ and $\| D
  p_s^{-1} \|_{r-2} < C_1 $. Again by Lemma \ref{lem:PathBounds} since
  $d_{r}(h, \Id) < C$ there exists $C_2 > 0$ such that $\|Dh\|_{r-1}<
  C_2$. Now by Lemma \ref{lem:TechnicalMVT} there exists $C_3>0$,
  depending only on $r$, such that
  \begin{align*}
    \ell_{r-1} (h \circ p_s) &\leq C_3 \, \|Dh\|_{r-1} \, (1 + \|D
    p_s\|_{r-2})^{r-1} \, \ell_{r-1}(p_s), \\
    \ell_{r-1} (p_s^{-1} \circ h^{-1}) &\leq C_3 \, \bigl(1+
    \|Dh^{-1}\|_{r-2}\bigr)^{r-1} \, \ell_{r-1}(p_s^{-1}). 
  \end{align*}
  All these quantities are bounded so we have a $C'>0$ such that 
  \begin{align*}
    \ell_{r-1} (h \circ p_s) &\leq C' \, \ell_{r-1}(p_s), \\
    \ell_{r-1} (p_s^{-1} \circ h^{-1}) &\leq C' \, \ell_{r-1}(p_s^{-1}).
  \end{align*}
  Taking infimums we obtain 
  \begin{equation*}
    d_{r-1}(h \circ g_1, h \circ g_2)  \leq C' d_{r-1}(g_1,g_2). 
  \end{equation*}
  The other direction is an immediate consequence of the symmetry of
  our metric.
\end{proof}

\subsection{Preliminary Estimates for the Flow Case}

\begin{lemma}\label{lem:DiffeomorphismGroup}
  Let $\eta \in C^\alpha(M, \mathfrak{X}^r(N))$ and define
  \begin{align*}
    \label{eq: DiffeomorphismHyperbolicityCondition}
    \rho_0 &= \max_{x \in M} \max_{y \in N} \|\eta_{x} (y) \|\\
    \rho_1 &= \max_{x \in M} \max_{y \in N} \|D_y \eta_{x}\|.
  \end{align*}
  Define the cocycle $\Phi : M \times \R \rightarrow \diff^r(N)$ by
  \begin{equation*}
    \frac{d}{ds} \Phi(x,s) = \eta_{f^s x} \circ \Phi(x,s),\qquad
    \Phi(x,0) = \Id.  
  \end{equation*}
  Then we have the following estimates for $n \leq r$
  \begin{align*}
    d_0 ( \Phi(x,s) , \Id ) \leq \rho_0 \, |s|, \qquad
    \| D^n \Phi (x,s)\| \leq C \, e^{ n (\rho_1 + \kappa  \, \rho_0) |s|}
  \end{align*}
  where $C$ is a constant that depends only on $r$ and $\kappa$ is the
  geometric constant introduced above. 
\end{lemma}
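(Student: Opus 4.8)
The plan is to establish the two estimates by the standard technique of differentiating along the flow and applying Gronwall, working level by level in the order of differentiation. First I would handle the $C^0$ estimate. Since $\frac{d}{ds}\Phi(x,s)(y) = \eta_{f^sx}(\Phi(x,s)(y))$, for each fixed $y \in N$ the curve $s \mapsto \Phi(x,s)(y)$ in $N$ has speed at most $\rho_0$ by definition of $\rho_0$. Hence the length of this curve over a parameter interval of length $|s|$ is at most $\rho_0 |s|$, and taking the supremum over $y \in N$ (and noting the symmetry of $d_r$ together with the fact that $\ell_0(p)=\ell_0(p^{-1})$ for paths from the identity) gives $d_0(\Phi(x,s),\Id) \le \rho_0 |s|$ and in particular $\ell_0(\Phi(x,\cdot);s) \le \rho_0 |s|$. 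This is the only estimate that is clean; it feeds into everything below.

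Next I would prove the derivative bounds $\|D^n\Phi(x,s)\| \le C e^{n(\rho_1 + \kappa\rho_0)|s|}$ by induction on $n$, for $0 \le n \le r$. The key is to differentiate the evolution equation $\frac{d}{ds}\Phi(x,s) = \eta_{f^sx}\circ\Phi(x,s)$ in the base variable $n$ times, using the Fa\`a di Bruno formula as in Lemma \ref{lem:TechnicalMVT}: schematically,
\begin{equation*}
  \frac{d}{ds} D^n\Phi(x,s) = D\eta_{f^sx}\circ\Phi(x,s)\cdot D^n\Phi(x,s) + \sum_{\substack{k_1+\cdots+k_n=k\\ k_1+2k_2+\cdots+nk_n=n\\ k<n}} C_{k_1,\dots,k_n}\, D^k\eta_{f^sx}\circ\Phi(x,s)\cdot\bigl[D^1\Phi^{\otimes k_1}\otimes\cdots\otimes D^n\Phi^{\otimes k_n}\bigr],
\end{equation*}
where the isolated top-order term $D\eta_{f^sx}\circ\Phi\cdot D^n\Phi$ is the only one involving $D^n\Phi$ itself and the remaining sum involves only lower-order derivatives $D^j\Phi$, $j<n$. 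One must also account for the fact that the operator norms are computed with base-point-dependent Riemannian metrics; this is exactly where the infinitesimal version of \eqref{eq:GeometricEstimate} enters, producing the extra term $\kappa\|D^n\Phi\|\cdot\|\frac{d}{ds}\Phi(x,s)\|$ in the differential inequality for $\frac{d}{ds}\|D^n\Phi(x,s)\|$, and $\|\frac{d}{ds}\Phi(x,s)(y)\| \le \rho_0$ from the first step. Collecting terms, and using the inductive bound $\|D^j\Phi(x,s)\| \le C_j e^{j(\rho_1+\kappa\rho_0)|s|}$ for $j<n$ to bound the lower-order sum (every such term is a product whose exponents $k_1+2k_2+\cdots$ sum to exactly $n$, hence the product of the corresponding exponentials is $e^{n(\rho_1+\kappa\rho_0)|s|}$ up to constants), one arrives at a differential inequality of the form
\begin{equation*}
  \frac{d}{ds}\|D^n\Phi(x,s)\| \le (\rho_1 + \kappa\rho_0)\,\|D^n\Phi(x,s)\| + C'\, e^{n(\rho_1+\kappa\rho_0)|s|},
\end{equation*}
with $\|D^n\Phi(x,0)\| = \|D^n\Id\|$ bounded by a geometric constant (it is $0$ for $n\ge 2$ in suitable coordinates, and $1$ for $n=1$). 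The classical Gronwall inequality (variation-of-constants form) then yields $\|D^n\Phi(x,s)\| \le C e^{n(\rho_1+\kappa\rho_0)|s|}$, with $C$ absorbing the polynomial-in-$s$ factor coming from the forcing term by slightly enlarging the exponent constant if needed — but in fact the exponent $n(\rho_1+\kappa\rho_0)$ is already the right one since the homogeneous rate $\rho_1+\kappa\rho_0$ is strictly smaller, so no enlargement is needed and the polynomial factor is dominated.

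The main obstacle I anticipate is the bookkeeping in the Fa\`a di Bruno sum together with the base-point dependence of the norms: one must be careful that when differentiating $\|D^n_y\Phi(x,s)v\|_{\Phi(x,s)(y)}$ the metric is moving, so the naive product rule must be replaced by the estimate derived from \eqref{eq:GeometricEstimate} (exactly as in the proof of Lemma \ref{lem:PathBounds}), and that the combinatorial constants $C_{k_1,\dots,k_n}$ and the count of terms depend only on $r$, not on $x$, $y$, or $s$. Everything else is a routine induction and Gronwall estimate, and the crucial numerical point — that the exponential rate is exactly $n(\rho_1+\kappa\rho_0)$ because the nonlinear (lower-order) contributions carry total weight $n$ in the exponents while the linear term carries rate $\rho_1+\kappa\rho_0$ — falls out of the structure of the Fa\`a di Bruno formula.
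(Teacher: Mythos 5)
Your proposal is correct and matches the paper's proof in all essentials: the $C^0$ bound by integrating the velocity, induction on $n$ via Fa\`a di Bruno applied to $\eta_{f^sx}\circ\Phi_s$ with the unique $k_n=1$ term isolated, the moving-metric correction from \eqref{eq:GeometricEstimate} producing the extra $\kappa\rho_0$ in the rate, and Gronwall to close. One small notational slip: the subscript constraint on the remaining sum should read $k_n=0$ rather than $k<n$ (for instance $k_1=n$, all others zero, has $k=n$), though your accompanying verbal claim that only $D^j\Phi$ with $j<n$ appear in those terms is what the induction actually needs and is correct.
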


\begin{proof}
  To aid in readability we will write $\Phi_s$ for $\Phi(x,s)$ since
  $x$ plays no r\^ole in this lemma.  We immediately have
  \begin{equation}\label{eq:VelocityBound}
    \Bigl\|\frac{d}{ds} \Phi_s(y)\Bigr\|_{\Phi_s(y)} \leq \rho_0
  \end{equation}
  which, upon integrating, establishes the first estimate. 

  We proceed by induction to establish the remaining estimates. For
  fixed $y \in N$ and $v \in T_y N$ we have 
  \begin{equation*}
    \frac{d}{ds} D_y \Phi_s(y)\, v = D_{\Phi_s(y)} \eta_{f^s x} \cdot
    D_y \Phi_s \, v , \qquad D \Phi(x,0) = \Id 
  \end{equation*}
  and thus 
  \begin{equation}
    \begin{aligned}
      \|\frac{d}{ds} D_y \Phi_s \, v \|_{\Phi_s(y)} = \|D_{\Phi_s}
      \eta_{f^s x}\| \, \|D_y \Phi_s \, v\|_{\Phi_s(y)} \leq \rho_1 \,
      \|D_y \Phi_s \, v\|_{\Phi_s(y)}
    \end{aligned}\label{eq:NormDerivativeBounds}
  \end{equation}

  Exactly as in Lemma \ref{lem:PathBounds}, using
  \eqref{eq:GeometricEstimate} yields the following estimate,
  \begin{multline*}
    \frac{d}{ds} \|D_y\Phi_s \, v\|_{\Phi_s(y)} \\
    \leq \|\frac{d}{ds} D_y\Phi_s \, v \|_{\Phi_s(y)} + \kappa \, \|
    D_y \Phi_s\, v \|_{\Phi_s(y)} \, \|\frac{d}{ds} \Phi_s(y) \|_{
      \Phi_s(y)}.
  \end{multline*}
  Using \eqref{eq:VelocityBound} and \eqref{eq:NormDerivativeBounds} we get 
  \begin{equation*}
     \frac{d}{ds} \|D_y\Phi_s \, v\|_{\Phi_s(y)} \leq
    \rho_1  \|D_y\Phi_s \, v\|_{\Phi_s(y)} 
    + \kappa \, \rho_0 \, \| D_y \Phi_s\, v \|_{\Phi_s(y)} 
  \end{equation*}
  The Gronwall inequality gives 
  \begin{equation*}
    \| D_y \Phi_s\, v \| \leq e^{(\rho_1 + \kappa \, \rho_0) |s|} 
  \end{equation*}
  which establishes the base case. 

  Applying the Fa\`a di Bruno formula to $\eta_{f^sx}\circ \Phi_s$ we
  obtain
  \begin{equation*}
    \frac{d}{ds} D^n_y [\eta_{f^sx}\circ \Phi_s] = \sum_{k_1, \dots,
      k_n}  C_{k_1, \dots, k_n} \, D^k_{\Phi_s(y)} \eta_{f^s x} \cdot
    \bigl( D_y \Phi_s  
    \bigr)^{\otimes k_1} \otimes \cdots \otimes \bigl(D^n_y \Phi_s
    \bigr)^{\otimes k_n} 
  \end{equation*}
  where $k=k_1 + \cdots + k_n$ and the sum is taken over all $k_1,
  \dots, k_n$ such that $k_1 + 2 k_2 + \cdots + n k_n = n$.

  Thus we obtain for any 
  \begin{align*}
    \| \frac{d}{ds} D^n_y \Phi_s \| & \leq \sum_{k_1, \dots, k_n}
    C_{k_1, \dots, k_n} \, \| D^k_{\Phi_s(y)} \eta_{f^s x} \| \, \|
    D_y \Phi_s\|^{k_1} \cdots \|D^n_y \Phi_s\|^{k_n}\\
    \intertext{Separating this into $k_n=1$ and $k_n$=0 terms we
      obtain} \| \frac{d}{ds} D^n_y \Phi_s \|&
     \begin{aligned}[t]
       \leq &\|D_{\Phi_s(y)}\eta_{f^sx}\| \, \|D^n_y \Phi_s\| \\
       & + \sum_{k_n=0} C_{k_1, \dots, k_{n-1}} \, \| D^k_{\Phi_s(y)}
       \eta_{f^s x} \| \, \| D_y \Phi_s\|^{k_1} \cdots \|D^{n-1}_y
       \Phi_s\|^{k_{n-1}}
     \end{aligned}\\
    & \leq \rho_1 \, \|D^n_y \Phi_s\| + C \,  e^{ n (\rho_1 + \kappa \, \rho_0) |s|}.
  \end{align*}
  Again, as in Lemma \ref{lem:PathBounds}, we obtain for any $v \in
  (T_yN)^{\otimes n}$ 
  \begin{align*}
    \frac{d}{ds} \| D^n_y \Phi_s \, v\|_{\Phi_s(y)} &\leq \|
    \frac{d}{ds} D^n_y \Phi_s\, v \|_{\Phi_s(y)} + \kappa \, \|D^n_y
    \Phi_s\, v \|_{\Phi_s(y)} \, \|\frac{d}{ds} \Phi_s(y)
    \|_{\Phi_s(y)}\\
    & \leq \rho_1 \, \|D^n_y \Phi_s \, v\| + C \, e^{ n (\rho_1 +
      \kappa \, \rho_0) |s|} + \kappa \, \rho_0 \, \|D^n_y \Phi_s \,
    v\|
  \end{align*}

  Using a Gronwall-type inequality we then obtain 
  \begin{align*}
    \| D^n_y \Phi_s \| \leq C \, e^{n (\rho_1 + \kappa \, \rho_0) |s|}
  \end{align*}
  as required.
\end{proof}

\subsection{Preliminary Estimates for the Diffeomorphism Case}

\begin{lemma}\label{lem:PreliminaryDiffeo}
  Let $\eta \in C^\alpha(M, \diff^r(N)$ and define
  \begin{align*}
    \rho_0 &= \max_{x \in M} d_0(\eta_x, \Id )\\
    \rho_1 &= \max_{x \in M} \max \{ \|D \eta_x\|, \|D \eta_x^{-1}\| \}. 
  \end{align*}
  Define the cocycle $\Phi: M \times \Z \rightarrow \diff^r(N)$ by 
  \begin{equation*}
    \Phi(x,n) =
    \begin{cases}
      \eta_{f^{n-1}x} \circ \cdots \circ \eta_x & n \geq 1\\
      \Id & n=0\\
      \eta_{f^nx}^{-1} \circ \cdots \circ \eta_{f^{-1}x}^{-1} & n \leq -1
    \end{cases}.
  \end{equation*}
  The we have the following estimates for $m \leq r$
  \begin{equation*}
    d_0(\Phi(x,n), \Id) \leq \rho_0 |n| \qquad \|D^m \Phi(x,n) \|\leq
    C \rho_1^{m \, |n|}. 
  \end{equation*}
\end{lemma}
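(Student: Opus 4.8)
The plan is to prove the two estimates separately: the first is a soft argument about the length metric $d_0$, the second a Fa\`a di Bruno induction in the spirit of the proof of Lemma~\ref{lem:DiffeomorphismGroup}. Throughout write $g_j:=\Phi(x,j)$, so $g_0=\Id$ and $g_j=\eta_{f^{j-1}x}\circ g_{j-1}$. It suffices to treat $n\ge 1$: the case $n=0$ is trivial, and the case $n\le -1$ is identical, run on the maps $\eta_\bullet^{-1}$ in place of $\eta_\bullet$ and using $d_0(\eta_x^{-1},\Id)=d_0(\eta_x,\Id)$ together with the uniform bound on $\|D^k\eta_x^{-1}\|$ established below. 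For $d_0(\Phi(x,n),\Id)\le\rho_0 n$ I would route a path through the identity. Fix $\delta>0$; for each $j\in\{1,\dots,n\}$ choose a piecewise-$C^1$ path $q^{(j)}$ in $\diff^r(N)$ from $\Id$ to $\eta_{f^{j-1}x}$ with $\ell_0(q^{(j)})<\rho_0+\delta/n$, which is possible since, as noted just before Lemma~\ref{lem:PathBounds}, $\ell_0(p)=\ell_0(p^{-1})$ for any path joining the identity to a diffeomorphism, so $d_0(\eta_{f^{j-1}x},\Id)=\inf_p\ell_0(p)\le\rho_0$. Concatenate the reparametrized paths $s\mapsto q^{(j)}_s\circ g_{j-1}$, $j=1,\dots,n$, into one piecewise-$C^1$ path $P$ from $g_0=\Id$ to $g_n=\Phi(x,n)$. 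Right composition by a fixed diffeomorphism merely relabels the points of $N$ and so preserves $\ell_0$, and $\ell_0$ is subadditive under concatenation, so $\ell_0(P)\le\sum_{j=1}^n\ell_0(q^{(j)})<n\rho_0+\delta$; since $P$ again joins $\Id$ to a diffeomorphism, $\ell_0(P^{-1})=\ell_0(P)$, whence $d_0(\Phi(x,n),\Id)\le\max\{\ell_0(P),\ell_0(P^{-1})\}<n\rho_0+\delta$, and $\delta\downarrow 0$ finishes this estimate.

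For the derivative bound I first record uniform $C^r$ control on $\eta$: $\eta(M)$ is $d_r$-compact in $\diff^r(N)$, so $R:=\sup_{x\in M}d_r(\eta_x,\Id)<\infty$, and choosing for each $x$ a path $p^x$ from $\Id$ to $\eta_x$ with $\ell_r(p^x),\ell_r((p^x)^{-1})\le 2R$, Lemma~\ref{lem:PathBounds} gives a constant $\bar\rho$ with $\|D^k\eta_x\|,\|D^k\eta_x^{-1}\|\le\bar\rho$ for all $x$ and all $1\le k\le r$; in particular $\rho_1\le\bar\rho<\infty$. Now induct on $m$. For $m=1$, $Dg_j=(D\eta_{f^{j-1}x}\circ g_{j-1})\cdot Dg_{j-1}$ gives $\|Dg_j\|\le\rho_1\|Dg_{j-1}\|$ and $\|Dg_0\|=1$, so $\|Dg_j\|\le\rho_1^{\,j}$. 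For $2\le m\le r$, assuming the estimate for all smaller orders, the Fa\`a di Bruno formula for $g_j=\eta_{f^{j-1}x}\circ g_{j-1}$ reads
\[
  D^m g_j=\sum_{k_1+2k_2+\cdots+mk_m=m}C_{k_\bullet}\,\bigl(D^{k}\eta_{f^{j-1}x}\circ g_{j-1}\bigr)\cdot\bigotimes_{i=1}^{m}\bigl(D^i g_{j-1}\bigr)^{\otimes k_i},\qquad k=\textstyle\sum_i k_i.
\]
The single term with $k_m=1$ equals $(D\eta_{f^{j-1}x}\circ g_{j-1})\cdot D^m g_{j-1}$, of norm $\le\rho_1\|D^m g_{j-1}\|$; every other term has $k_m=0$, hence $\sum_{i=1}^{m-1}i k_i=m$, so by the inductive hypothesis its norm is bounded by a constant $A$ — depending only on $r$, $N$, $\bar\rho$ and the earlier constants — times $\rho_1^{\,m(j-1)}$. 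Thus $u_j:=\|D^m g_j\|$ satisfies $u_j\le\rho_1 u_{j-1}+A\rho_1^{\,m(j-1)}$ with $u_0=\|D^m\Id\|=0$, and summing the geometric series yields $u_j\le C_m\rho_1^{\,mj}$ (with $C_m=A/(\rho_1^{\,m-1}-1)$ when $\rho_1>1$; if $\rho_1=1$ then each $\eta_x$ is an isometry and $D^m\Phi(x,n)=0$ for $m\ge 2$).

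The Fa\`a di Bruno bookkeeping is routine; the delicate point is the first estimate. A naive path — interpolating one factor at a time directly between $g_{j-1}$ and $g_j$, or all $n$ factors simultaneously — only yields a bound exponential in $n$, because left composition by $g_j^{\pm1}$ drags a factor $\|Dg_j^{\pm1}\|\sim\rho_1^{\,j}$ into $\ell_0$. The linear bound genuinely exploits two features of this metric: right composition by a diffeomorphism is an $\ell_0$-isometry (so the path may be routed through $\Id$), and the $\ell_0$-length of a path to the identity equals that of its inverse (so the return leg is free). If one prefers, the first estimate also follows at once by tracking the forward orbit of a point through the $n$ maps, provided one invokes that $d_0(g,\Id)$ agrees with $\max_{y\in N}d_N(y,g(y))$.
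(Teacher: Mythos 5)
Your proof follows essentially the same route as the paper's: the $d_0$ bound via right-composition invariance of $\ell_0$ (you concatenate explicit paths through $\Id$; the paper iterates the triangle inequality using $d_0(\eta\circ g,g)\le d_0(\eta,\Id)$, which is the same computation), and the derivative bound by a Fa\`a di Bruno induction that isolates the $k_m=1$ term and sums the resulting geometric series. Your explicit treatment of the degenerate case $\rho_1=1$ is a small refinement the paper glosses over (its constant $C/(\rho_1^m-\rho_1)$ is meaningless there), but this does not change the structure of the argument.
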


\begin{proof}
  By the triangle inequality we have for $n \geq 1$
  \begin{align*}
    d_0(\Phi(x,n), \Id) &\leq d_0\bigl(\Phi(x,n),\Phi(x,n-1)\bigr) +
    \cdots + d_0\bigl(\Phi(x,1), \Id \bigr)\\
    &\leq d_0\bigl(\eta_{f^{n-1}x} \circ \Phi(x,n-1), \Phi(x,n-1)
    \bigr) + \cdots + d_0\bigl(\eta_x, \Id \bigr)\\
    &\leq d_0\bigl(\eta_{f^{n-1}x}, \Id \bigr) + \cdots +
    d_0\bigl(\eta_x, \Id \bigr)\\
    &\leq n \max_{x \in M} d_0(\eta_x,\Id).
  \end{align*}
  Since $d_0(\eta_x, \Id) = d_0(\eta_x^{-1},\Id)$ a similar argument
  works for $n \leq 0$ too. 
  
  We have the following basic evolution equation
  \begin{equation*}
    \Phi(x, n+1) = \eta_{f^nx} \circ \Phi(x,n)
  \end{equation*}
  and hence we have
  \begin{equation*}
    D \Phi(x,n+1) = D\eta_{f^nx} \circ \Phi(x,n) \cdot D \Phi(x,n). 
  \end{equation*}
  Taking norms, we get 
  \begin{align*}
   \| D \Phi(x,n+1) \| &\leq \|D\eta_{f^nx}\| \, \|D \Phi(x,n) \|\\
   & \leq \rho_1 \, \|D \Phi(x,n) \|.
  \end{align*}
  Finally since $\Phi(x,0) = \Id$, and hence $\|D \Phi(x,0) \|=1$, we
  see that we have
  \begin{equation*}
    \|D \Phi(x,n) \| \leq \rho_1^n    
  \end{equation*}
  for all $n \geq 0$. Observe that for $n \leq 0$ we have the
  following evolution equation
  \begin{equation*}
    \Phi(x,n-1) = \eta_{f^{n-1} x}^{-1} \circ \Phi(x, n)
  \end{equation*}
  so exactly as above we obtain 
  \begin{align*}
    \|D \Phi(x,n-1) \| &\leq \|D \eta_{f^{n-1}x}^{-1}\| \, \|D
    \Phi(x,n)\|\\
    &\leq \rho_1 \, \|D \Phi(x,n)\|.
  \end{align*}
  Hence we get
  \begin{equation*}
    \|D \Phi(x,n) \| \leq \rho_1^{|n|}    
  \end{equation*} 
  for $n \leq 0$.
  
  We now proceed by induction. Suppose that we have the estimate 
  \begin{equation*}
    \|D^k \Phi(x,n) \| \leq C \, \rho_1^{k \, n} 
  \end{equation*}
  for all $k < m$. We will now establish the estimate for $m$. 
  Applying the Fa\`a di Bruno formula to our basic evolution equation
  we obtain
  \begin{align*}
    D^m \Phi(x, n+1) = \sum_{k_1, \cdots, k_m} &D^k\eta_{f^nx}\circ
    \Phi(x,n)\\
    &\cdot \Bigl[\bigl(D^1 \Phi(x,n)\bigr)^{\otimes k_1} \otimes
    \cdots \otimes \bigl(D^m \Phi(x,n) \bigr)^{\otimes k_m} \bigr]
  \end{align*}
  where $k = k_1 + \cdots k_m$ and the sum is taken over all $k_1,
  \dots, k_m \geq 0$ such that $k_1 + 2 k_2 + \dots + m k_m =
  m$. Either $k_m=0$ or $k_m=1$. We separate these terms 
  \begin{multline*}
    D^m \Phi(x, n+1) =\\ D^1 \eta_{f^n x} \circ \Phi(x,n) \cdot
    D^m\Phi(x,n) + \sum_{k_1, \cdots, k_{m-1}} D^k\eta_{f^nx}\circ
    \Phi(x,n) \\
    \cdot \Bigl[\bigl(D^1 \Phi(x,n)\bigr)^{\otimes k_1} \otimes \cdots
    \otimes \bigl(D^{m-1} \Phi(x,n) \bigr)^{\otimes k_{m-1}} \bigr].
  \end{multline*}
  Taking norms we obtain 
  \begin{multline*}
    \|D^m \Phi(x, n+1)\| \leq \|D^1 \eta_{f^n x}\| \, \|
    D^m\Phi(x,n)\| \\+  \sum_{k_1, \cdots, k_{m-1}}  \| D^k\eta_{f^nx}
    \| \|D^1 \Phi(x,n) \|^{k_1} \cdots \|D^{m-1} \Phi(x,n)\|^{k_{m-1}} 
  \end{multline*}
  Now applying the inductive assumption we obtain
  \begin{equation*}
    \|D^m \Phi(x, n+1)\| \leq \rho_1 \, \| D^m\Phi(x,n)\| + C \,
    \rho_1^{m n}
  \end{equation*}
  where $C$ depends on $m$ and on $\eta$.  Using this we can check that if 
  $$\| D^m\Phi(x,n)\| \leq  \frac{C}{\rho_1^m - \rho_1} \rho_1^{m \, n}$$
  then 
  $$\| D^m\Phi(x,n+1)\| \leq \frac{C}{\rho_1^m - \rho_1} \rho_1^{m \,
    (n+1)}.$$
  Since the estimate is obviously true for $n=0$ we have established
  it for all $n \geq 0$. Starting with the evolution equation
  \begin{equation*}
    \Phi(x,n-1) = \eta_{f^{n-1}x}^{-1} \circ \Phi(x,n) 
  \end{equation*}
  and applying the same estimates establishes the result for all $n
  \leq 0$. 

\end{proof}

\subsection{Main Theorem for Diffeomorphism Valued Cocycles}

Let $\mathfrak{X}^r(N)$ denote the space of $C^r$ vector fields on
$N$.

\begin{theorem}\label{thm:DiffeomorphismLivsic}
  Let $M$ be a compact Riemannian manifold with $f^t:M \rightarrow M$
  be a $C^1$ topologically transitive $\lambda$-hyperbolic Anosov
  flow. Let $N$ be a compact Riemannian manifold.

  Given a $\eta \in C^\alpha \bigl( M,\mathfrak{X}^r(N) \bigr)$ define
  a cocycle $\Phi: M \times \R \rightarrow \diff^r(N)$ by
  \begin{equation*}
    \frac{d}{dt} \Phi(x,t) = \eta(f^t x) \circ \Phi(x,t), \qquad \Phi(x,0)
    = \Id.
  \end{equation*}
  
  Let
  \begin{align*}
    \rho_0 &= \max_{x \in M} \max_{y \in N} \| \eta_x(y) \|\\
    \rho_1 &= \max_{x \in M} \max_{y \in N} \| D_y \eta_x \|.
  \end{align*}

  Suppose that for the pair $f^t$ and $\eta$:
  \begin{enumerate}
  \item The periodic orbit obstruction vanishes:
    \begin{quote}
      If $f^tp=p$ then $\Phi(p, t) = \Id$.
    \end{quote}

  \item The hyperbolicity condition is satisfied:
    \begin{equation}
      \label{eq:DiffeoValuedHypCondFlow}
      (2 r-1) (\rho_1 + \kappa \rho_0)  - \lambda \alpha  < 0. 
    \end{equation}
  \end{enumerate}
  Then there exists $\phi \in C^\alpha \bigl( M, \diff^{r-3}(N)
  \bigr)$ that solves
  \begin{equation}
    \label{eq:DiffeoFlowCoboundary}
    \Phi(x,t) = \phi(f^tx) \circ \phi^{-1}(x). 
  \end{equation}
\end{theorem}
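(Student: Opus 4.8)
The plan is to transcribe the proof of Theorem~\ref{thm:LivsicLieGroupFlow} into the setting of $\diff^r(N)$, using pointwise computations in the local representatives in place of the Lie-group differential calculus, and replacing the Lipschitz bounds on left and right multiplication by the mean-value estimate of Lemma~\ref{lem:MVT} and the a~priori derivative bounds of Lemma~\ref{lem:DiffeomorphismGroup}. Since $f^t$ is topologically transitive, fix $x^\ast\in M$ with a dense orbit $\mathcal{O}(x^\ast)$ and set $\phi(f^tx^\ast):=\Phi(x^\ast,t)$, which is forced by \eqref{eq:DiffeoFlowCoboundary}. Exactly as in Theorem~\ref{thm:LivsicLieGroupFlow} the problem reduces to a uniform local H\"older estimate for the cocycle along the dense orbit; the one new feature is that, in passing from such a local estimate to a global H\"older bound for $\phi$, one must precompose by the diffeomorphisms $\phi(f^tx^\ast)$, which costs one derivative through Lemma~\ref{lem:MVT}. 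Keeping track of this, it will suffice to produce $\delta>0$ and $K>0$ with
\begin{equation*}
  d_M(f^{t+T}x^\ast,f^tx^\ast)<\delta \quad\Longrightarrow\quad d_{r-2}\bigl(\Phi(f^tx^\ast,T),\Id\bigr)\le K\,d_M(f^{t+T}x^\ast,f^tx^\ast)^{\alpha},
\end{equation*}
for then $\phi$ extends to a $C^\alpha$ map $M\to\diff^{r-3}(N)$ by the usual argument (the $\phi$-orbit is first seen to be $d_{r-2}$-bounded, and Lemma~\ref{lem:MVT} then yields the global bound one derivative lower).

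For the local estimate I apply the Anosov Closing Lemma, Lemma~\ref{AnosovClosingFlow}, to produce a periodic point $p$ with $f^{T+\Delta}p=p$, with $|\Delta|$ and $d_M(f^{t+T}x^\ast,p)$ both at most $K\,d_M(f^{t+T}x^\ast,f^tx^\ast)$, and a point $z\in W^s_{\loc}(p)\cap W^u_{\loc}(f^tx^\ast)$. Introduce the forward comparison curve $C_F(s):=\Phi(p,s)^{-1}\circ\Phi(z,s)$, so $C_F(0)=\Id$. Differentiating in the local charts, the chain rule for inverse and composition gives the clean expression
\begin{equation*}
  \frac{d}{ds}C_F(s)=\Bigl(D\Phi(p,s)^{-1}\circ\Phi(z,s)\Bigr)\Bigl[\bigl(\eta(f^sz)-\eta(f^sp)\bigr)\circ\Phi(z,s)\Bigr],
\end{equation*}
a vector field over $C_F(s)$ which is linear in the $\eta$-difference. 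Estimating $\bigl\|\tfrac{d}{ds}D^nC_F(s)\bigr\|$ by the Fa\`a di Bruno bookkeeping of Lemma~\ref{lem:DiffeomorphismGroup}, using $\|D^k\Phi(\cdot,s)\|,\|D^k\Phi(\cdot,s)^{-1}\|\le Ce^{k(\rho_1+\kappa\rho_0)|s|}$ for the coefficients (Lemma~\ref{lem:DiffeomorphismGroup}, applied also to $\Phi(p,s)^{-1}=\Phi(f^sp,-s)$), the $C^\alpha$ regularity of $\eta$ into $\mathfrak{X}^r(N)$, which bounds the $C^r$ norm of $\eta(f^sp)-\eta(f^sz)$ by $C\,d_M(f^sp,f^sz)^{\alpha}$, and the contraction $d_M(f^sp,f^sz)\le Ce^{-\lambda s}d_M(p,z)$ along $W^s$, one finds
\begin{equation*}
  \Bigl\|\frac{d}{ds}D^nC_F(s)\Bigr\|\le C\,e^{\bigl((2n+1)(\rho_1+\kappa\rho_0)-\lambda\alpha\bigr)s}\,d_M(f^{t+T}x^\ast,f^tx^\ast)^{\alpha},\qquad 0\le n\le r-1.
\end{equation*}
The exponent $(2n+1)(\rho_1+\kappa\rho_0)$ records the worst case in which all the available differentiations land on the two cocycle factors; it is negative for every $n\le r-1$ precisely by the hyperbolicity hypothesis \eqref{eq:DiffeoValuedHypCondFlow}, which is where the constant $2r-1=2(r-1)+1$ enters. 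Since $d_{r-1}$ is a length metric and $d_{r-1}(g,h)=d_{r-1}(g^{-1},h^{-1})$, integrating gives $d_{r-1}(C_F(s),\Id)\le C\,d_M(f^{t+T}x^\ast,f^tx^\ast)^{\alpha}$ uniformly in $s\ge0$, in particular at $s=T$.

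It remains to convert this, exactly as in Theorem~\ref{thm:LivsicLieGroupFlow}. The periodic orbit obstruction gives $\Phi(p,T+\Delta)=\Id$, hence $\Phi(p,T)=\Phi(p,-\Delta)$; as $|\Delta|$ is small, $\Phi(p,-\Delta)$ lies within $C|\Delta|$ of $\Id$ in the full metric $d_r$ (integrate the defining flow over $[0,\Delta]$), so Lemma~\ref{lem:MVT} applied to $\Phi(z,T)=\Phi(p,T)\circ C_F(T)$ gives $d_{r-1}(\Phi(z,T),\Id)\le C\,d_M(f^{t+T}x^\ast,f^tx^\ast)^{\alpha}$. Running the backward comparison curve $C_R(s):=\Phi(f^{t+T}x^\ast,-s)^{-1}\circ\Phi(f^Tz,-s)$, which by the cocycle property equals $\Phi(f^tx^\ast,T)\circ\Phi(z,T)^{-1}$, the identical estimate along $W^u(f^{t+T}x^\ast)$ yields $d_{r-1}(C_R(T),\Id)\le C\,d_M(f^{t+T}x^\ast,f^tx^\ast)^{\alpha}$. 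Since $\Phi(z,T)$ is controlled uniformly only in $d_{r-1}$, applying Lemma~\ref{lem:MVT} to $\Phi(f^tx^\ast,T)=C_R(T)\circ\Phi(z,T)$ produces the bound one derivative lower, namely $d_{r-2}(\Phi(f^tx^\ast,T),\Id)\le C\,d_M(f^{t+T}x^\ast,f^tx^\ast)^{\alpha}$, which is the required local estimate; feeding it back into the reduction of the first paragraph completes the proof.

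I expect the main obstacle to be the regularity bookkeeping rather than any new analytic idea. Because composition in $\diff^r(N)$ is not differentiable, every step must be carried out in local representatives, and one must verify that all the constants supplied by Lemmas~\ref{lem:PathBounds}, \ref{lem:TechnicalMVT}, \ref{lem:MVT}, \ref{lem:DiffeomorphismGroup} stay uniform in $t$, $T$, $s$ and in the base point — in particular that uniform control of $C_F(T)$, $C_R(T)$, $\Phi(z,T)$ is only ever available at regularity $r-1$, since the a~priori $C^r$-bounds of Lemma~\ref{lem:DiffeomorphismGroup} blow up with $T$. The three degrees of regularity lost are located precisely at: the passage from $\Phi\in\diff^r$ to the $d_{r-1}$-estimate of the comparison curves (where the hyperbolicity constant $2r-1$ is forced); the reconstruction of $\Phi(f^tx^\ast,T)$ from $C_R(T)$ and $\Phi(z,T)$ through Lemma~\ref{lem:MVT}; and the final passage from the local cocycle estimate to the global H\"older bound for $\phi$, again through Lemma~\ref{lem:MVT}. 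Everything else — the use of the closing lemma, the exponential estimates along $W^s$ and $W^u$, the role of the periodic orbit obstruction, and the concluding triangle inequalities — is formally identical to the Lie-group flow case of Theorem~\ref{thm:LivsicLieGroupFlow}.
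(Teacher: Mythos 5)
Your proposal follows the paper's proof essentially step for step — the same reduction to the local $d_{r-2}$ H\"older estimate on $\Phi(f^tx^*,T)$, the same use of the Anosov Closing Lemma and the forward/backward comparison curves $C_F$, $C_R$ estimated in $d_{r-1}$ via Fa\`a di Bruno and Lemma~\ref{lem:DiffeomorphismGroup}, and the same assembly of the final bound through Lemma~\ref{lem:MVT}. Your accounting of where the three derivatives are lost (dropping to $d_{r-1}$ for the comparison curves, one more in reconstructing $\Phi(f^tx^*,T)$ from $C_R(T)$ and $\Phi(z,T)$, and one more in passing from the local cocycle estimate to the global H\"older bound for $\phi$) matches the paper's bookkeeping exactly and correctly explains the conclusion at regularity $r-3$.
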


\begin{rem}
  Using H\"older estimates it should be possible to show that the
  solution $\phi \in \diff^{r-\epsilon}(N)$~\cite{delaLlaveO99}. Using
  different arguments we hope to be able to show that $\phi \in
  \diff^{r}(N)$ so that there is no loss of differentiability. 
\end{rem}

\begin{proof}
  Let $x^* \in M$ be a point with a dense orbit,
  $\mathcal{O}(x^*)$. If we fix $\phi(x^*)$ then, by
  \eqref{eq:DiffeoFlowCoboundary}, we can define $\phi$ on all of
  $\mathcal{O}(x^*)$ by
  \begin{equation*}
    \phi(f^tx^*) = \Phi(x^*,t) \circ \phi(x^*).  
  \end{equation*}
  
  First, we show that the following H\"older condition on
  $\Phi$,
  \begin{multline}
    \label{eq:LocalHolderDiffeo}
    \text{ if $d_M( f^{t+T}x^*, f^tx^*) < \delta$ then}\\
    d_{r-2}\bigl( \Phi(f^tx^*,T), \Id \bigr) < K \, d_M( f^{t+T}x^*,
    f^tx^*)^\alpha
  \end{multline}
  implies the following H\"older condition
  \begin{multline}
    \label{eq:GlobalHolderDiffeo}
    \text{ if $d_M( f^{t+T}x^*, f^tx^*)<\delta$ then}\\
    d_{r-3}\bigl( \phi(f^{t+T}x^*), \phi(f^tx^*) \bigr) < K \, d_M(
    f^{t+T}x^*, f^tx^*)^\alpha.
  \end{multline}
  Condition \eqref{eq:GlobalHolderDiffeo} is precisely the condition
  that ensures that $\phi$ defined on $\mathcal{O}(x^*)$ can be extended
  to $\phi \in C^\alpha\bigl(M, \diff^{r-3}(N)\bigr)$.

  The collection $\{B(f^t x^*, \delta) \}_{t \in \R}$ is an open
  cover of $M$ and therefore by compactness we have a finite sub-cover
  $\{B(f^{t_i} x^*, \delta) \}_{i = 1}^m$. By finiteness there exists
  a constant $C>0$ such that $d_r( \phi(f^{t_i}x^*) , \Id) < C$ for $1
  \leq i \leq m$.  Given an arbitrary $t \in \R$ we choose $1 \leq i
  \leq m$ such that $d_M(f^t x^*, f^{t_i }x^*)< \delta$. From
  \eqref{eq:LocalHolderDiffeo} we get $d_{r-2} \bigl( \Phi(f^{t_i}x^*,
  t-t_i), \Id \bigr) < K \, \delta^\alpha$. By Lemma \ref{lem:MVT} we have
  \begin{multline*}
    d_{r-2}\bigl( \Phi(f^{t_i}x^*, t-t_i) \circ \phi(f^{t_i}x^*) ,
    \phi(f^{t_i}x^*) \bigr) \\
    \leq C \, d_{r-2} \bigl(  \Phi(f^{t_i}x^*,
    t-t_i), \Id \bigr) \leq C \, K \, \delta^\alpha.  
  \end{multline*}
  In particular $d_{r-2}\bigl( \phi(f^tx^*), \phi(f^{t_i}x^* \bigr)$
  is bounded. However since \linebreak $d_{r-2}\bigl(
  \phi(f^{t_i}x^*),\Id \bigr)$ is bounded we have that $d_{r-2}\bigl
  (\phi(f^tx^*), \Id \bigr)$ is bounded.

  Suppose that $t$ and $T$ are such that $d_M(f^tx*, f^{t+T}x^*) <
  \delta$. Assuming \eqref{eq:LocalHolderDiffeo} we then have $d_{r-2}
  \bigl( \Phi(f^{t}x^*, T), \Id \bigr)$ uniformly bounded.  We
  can again apply Lemma \ref{lem:MVT} to obtain
  \begin{equation*}
    d_{r-3} \bigl( \phi(f^tx^*),\phi(f^{t+T}x^*) \bigr) \leq C \,
      d_{r-3}  \bigl( \Phi(f^{t}x^*, T), \Id \bigr). 
  \end{equation*}
  Then applying \eqref{eq:LocalHolderDiffeo} we obtain
  \eqref{eq:GlobalHolderDiffeo}. Therefore to prove the theorem it
  suffices to prove \eqref{eq:LocalHolderDiffeo}.

  Suppose that $d_M( f^{t+T}x^*, f^tx^*)<\delta$ and apply the Anosov
  Closing Lemma, Lemma \ref{AnosovClosingFlow}, to obtain a periodic
  point $p \in M$ with $f^{T+\Delta}p=p$ and a point $z \in W^s(p)
  \cap W^u(f^tx^*)$. The periodic point satisfies:
  \begin{enumerate}
  \item $|\Delta| < K \, d_M( f^{t+T}x^*, f^tx^*)$.
    
  \item $d_M( f^{t+T} x^*, p) \leq K \, d_M( f^{t+T}x^*, f^tx^*)$.
    
  \end{enumerate}
  Again we let $C_F(s) := \Phi^{-1}(p,s) \circ \Phi(z,s)$. Here we
  encounter our first difficulty; even though $\Phi(p,s)$ and
  $\Phi(z,s)$ are differentiable as maps from $\R$ to $\diff^r(N)$ the
  map $C_F(s)$ is not since the group operation is not
  differentiable. The solution is to consider $C_F(s)$ in
  $\diff^{r-1}(N)$. The diffeomorphism $C_F(s)$ obeys the following
  differential equation in $T \diff^{r-1}(N)$,
  \begin{align*}
    \frac{d}{ds} C_F(s) &= D \Phi^{-1}(p,s)\circ \Phi(z,s) \cdot
    [\eta_{f^sz}\circ \Phi(z,s) - \eta_{f^sp}\circ \Phi(z,s)],\\
    C_F(0) &= \Id. 
  \end{align*}
  This equation is exactly analogous to the equation obtained in the
  Lie group case~\eqref{eq:CFdiffeq}. However, rather than deal with
  the Banach manifold $T \diff^{r-1}(N)$ we wish to use the familiar
  theory of differential equations on the compact manifold $N$.

  We wish to estimate $C_F(s)$ in $C^{r-1}$. Since $\Phi^{-1}(p,s) =
  \Phi(f^s p,-s)$ we can estimate $D \Phi^{-1}(p,s)$ and $\Phi(z,s)$
  using Lemma \ref{lem:DiffeomorphismGroup}. The fact that $\eta \in
  C^\alpha(M, \diff^r(N))$ means that we have
  \begin{equation}\label{eq:HolderEtaEstimate}
    \| D^n \eta_{f^s z} - D^n \eta_{f^sp}\| \leq C \, d_M( f^s z, f^s
    p)^\alpha
  \end{equation}
  for $0 \leq n \leq r$. Since $z \in W^s(p)$ we have 
  \begin{equation*}
    d_M( f^s z, f^s  p) < e^{-\lambda s} \, d_M(z,p).
  \end{equation*}
  From our statement of the Anosov Closing Lemma, Lemma
  \ref{AnosovClosingFlow}, we have
  \begin{equation*}
    d_M(z,p) \leq C d_M(f^tx^*,f^{t+T}x^*).
  \end{equation*}
  
  Using Lemma \ref{lem:DiffeomorphismGroup}, and
  \eqref{eq:HolderEtaEstimate}, we estimate
  \begin{align*}
    \| D \Phi^{-1}(p,s)\circ \Phi(z,s) \cdot
    [\eta_{f^s z} &\circ \Phi(z,s) - \eta_{f^sp} \circ \Phi(z,s)] \|\\
    &\leq \| D \Phi^{-1}(p,s) \|\, \|\eta_{f^s z}- \eta_{f^sp}\|\\
    &\leq C \, e^{(\rho_1 + \kappa \, \rho_0
 - \lambda \alpha) s} \,
    d_M(f^tx^*,f^{t+T}x^*)^\alpha\\
    \intertext{and}
    \| D \Phi^{-1}(z,s)\circ \Phi(p,s) \cdot
    [\eta_{f^s p} &\circ \Phi(p,s) - \eta_{f^s z} \circ \Phi(p,s)] \|\\
    &\leq \| D \Phi^{-1}(z,s) \|\, \|\eta_{f^s p}- \eta_{f^s z}\|\\
    &\leq C \, e^{(\rho_1 + \kappa \, \rho_0 - \lambda \alpha) s} \,
    d_M(f^tx^*,f^{t+T}x^*)^\alpha.
  \end{align*}
  Integrating these estimates, and using
  \eqref{eq:DiffeoValuedHypCondFlow}, we obtain
  \begin{equation*}
    d\bigl( C_F(s), \Id \bigr) \leq  \frac{C}{\lambda \alpha - \rho}
    d_M(f^tx^*,f^{t+T}x^*)^\alpha. 
  \end{equation*}

  It remains to estimate the derivatives $D^n C_F(s)$ and
  $D^nC_F^{-1}(s)$ for $n \leq r-1$.  Applying the Fa\`a di Bruno
  formula to the differential equation for $D^nC_F(s)$ we obtain
  \begin{align*}
    \frac{d}{ds} D^n C_F(s) &= D^n \Bigl( \bigl( D \Phi^{-1}(p,s)
    \cdot [\eta_{f^s z}-\eta_{f^sp}]\bigr) \circ \Phi(z,s) \Bigr)\\
    &=
    \begin{aligned}[t]
      \sum_{k_1,\dots,k_n} C_{k_1,\dots,k_n} &D^k \bigl( D
      \Phi^{-1}(p,s) \cdot [\eta_{f^s
        z}-\eta_{f^sp}]\bigr) \\
      &\cdot (D \Phi(z,s))^{\otimes k_1} \cdots (D^n
      \Phi(z,s))^{\otimes k_n}
    \end{aligned}
  \end{align*}
  The general term consists of products of two types of factors: 
  \begin{equation*}
    D^n \Phi(z,s)\text{ and }D^n \bigl(
    D \Phi^{-1}(p,s) \cdot [\eta_{f^s z}-\eta_{f^sp}]\bigr).
  \end{equation*}  
  The term $ D^n \Phi(z,s)$ is estimated using Lemma
  \ref{lem:DiffeomorphismGroup}.  To estimate 
  \begin{equation*}
    \| D^n \bigl[ D
    \Phi^{-1}(p,s) \cdot [\eta_{f^s z} -\eta_{f^sp}] \bigr] \|
  \end{equation*}
  we apply the Leibniz rule to obtain
  \begin{equation*}
    D^n \bigl[ D \Phi^{-1}(p,s) \cdot [\eta_{f^s z}
    -\eta_{f^sp}] \bigr] = \sum_{k=0}^n D^{k+1} \Phi^{-1}(p,s) \cdot
    [D^{n-k} \eta_{f^s z}-D^{n-k}\eta_{f^sp}].
  \end{equation*}
  As above, we obtain 
  \begin{align*}
    \|D^n \bigl[ D \Phi^{-1}(p,s) \cdot [\eta_{f^s z} &-\eta_{f^sp}]
    \bigr]\| \\
    &\leq C \, e^{(n+1) (\rho_1 + \kappa \, \rho_0) s} d_M(
    f^s z, f^s
    p)^\alpha \\
    &\leq C \, e^{(n+1) (\rho_1 + \kappa \, \rho_0) s - \lambda \alpha
      s} d_M (z,p)^\alpha\\ 
    &\leq C \, e^{(n+1) (\rho_1 + \kappa \, \rho_0) s - \lambda \alpha
      s} d_M (f^t x^*,f^{t+T}x^*)^\alpha
  \end{align*}
  
  Using the estimate from Lemma \ref{lem:DiffeomorphismGroup}, and our
  intermediate computation, we obtain 
  \begin{equation*}
    \bigl\|\frac{d}{ds}D^n C_F(s) \bigr\| \leq C \, e^{((2 n+1)
      (\rho_1 + \kappa \, \rho_0) - 
      \lambda \alpha ) s } \, d_M (f^t x^*,f^{t+T}x^*)^\alpha.
  \end{equation*}
  Similarly, we get 
  \begin{equation*}
    \bigl\|\frac{d}{ds}D^n C_F^{-1}(s) \bigr\| \leq C \, e^{((2 n+1)
      (\rho_1 + \kappa \, \rho_0) - \lambda \alpha ) s } \, d_M (f^t
    x^*,f^{t+T}x^*)^\alpha. 
  \end{equation*}
  Integrating, and  using \eqref{eq:DiffeoValuedHypCondFlow}, we obtain the
  following estimate
  \begin{equation*}
    d_{r-1}\bigl( C_F(s) , \Id \bigr) \leq \frac{C}{\lambda \alpha - (2 r -
      1) (\rho_1 + \kappa \, \rho_0)} \, d_M (f^t x^*,f^{t+T}x^*)^\alpha. 
  \end{equation*}
  Now we can write 
  \begin{align*}
    d_{r-1}\bigl( \Phi(z,T) , \Id \bigr) &\leq
    \begin{aligned}[t]
      d_{r-1} \bigl( \Phi(p,T) &\circ \Phi^{-1}(p,T) \circ \Phi(z,T) ,
      \Phi(p,T) \bigr)\\ 
      &+ d_{r-1} \bigl( \Phi(p,T), \Id \bigr)
    \end{aligned}
  \end{align*}
  From the cocycle property and the periodic orbit obstruction we have 
  $\Phi(p,T) = \Phi(p,-\Delta)$. From the Anosov Closing Lemma we have
  $|\Delta|< C \, d_M (f^{t}x^*, f^{t+T}x^*)$ and hence $\Delta$ is
  bounded. Thus we immediately get 
  \begin{equation*}
    d_{r-1}\bigl( \Phi(p,T) , \Id \bigr) \leq C \, |\Delta|. 
  \end{equation*}
  Consider $C_F(s)$ for $0 \leq s \leq T$ as a path. Using
  Lemma \ref{lem:MVT}, we estimate
  \begin{multline*}
    \ell_{r-1} \bigl( \Phi(p,T) \circ C_F(s) \bigr)\\ \leq C \,
    \|D\Phi(p,T)\|_{r-1} \, \bigl(1+\max_{s \in [0,T]}
    \|DC_F(s)\|_{r-2}\bigr)^{r-1} \, \ell_{r-1} \bigl( C_F(s) \bigr)
  \end{multline*}
  Since $\Phi(p,T+\Delta) = \Id$ and $\Delta$ is bounded we have
  $\|D\Phi(p,T)\|_{r-1}$ is uniformly bounded. Since $d_{r-1}\bigl(
  C_F(s) , \Id \bigr)$ is uniformly bounded we have by
  Lemma~\ref{lem:PathBounds} that $\max_{s \in [0,T]}
  \|DC_F(s)\|_{r-2}$ is uniformly bounded. Similarly, using Lemma
  \ref{lem:MVT}, we estimate
  \begin{align*}
    \ell_{r-1} \bigl( C_F^{-1} (s) \circ \Phi^{-1}(p,T) \bigr) \leq C
    \, (1+ \| D \Phi^{-1}(p,T) \|_{r-2})^{r-1} \ell_{r-1} \bigr( C_F^{-1}(s)
    \bigr). 
  \end{align*}
  The term $\| D \Phi^{-1}(p,T) \|_{r-2}$ is uniformly bounded since
  $\Delta$ is bounded.  Thus we finally obtain
  \begin{align*}
    d_{r-1} \bigl( \Phi(p,T) &\circ \Phi^{-1}(p,T) \circ \Phi(z,T) ,
    \Phi(p,T) \bigr)\\
    &\leq C \, d_{r-1} \bigl(C_F(T), \Id \bigr) \\
    &\leq \frac{C}{\lambda \alpha - (2 r -1) (\rho_1 + \kappa \,
      \rho_0)} \, d_M(f^tx^*, f^{t+T}x^*)^\alpha
  \end{align*}
  Combining these estimates, we get
  \begin{equation}
    \label{eq:PhizEstimateFlow}
    d_{r-1}\bigl( \Phi(z,T) , \Id \bigr) \leq C \, d_M(f^tx^*,
      f^{t+T}x^*)^\alpha 
  \end{equation}

  Now we compare the cocycle along orbits that converge in backward
  time. Let
  \begin{equation*}
    C_R(s) = \Phi^{-1}(f^{t+T}x^*,-s) \circ \Phi(f^Tz,-s). 
  \end{equation*}
  The function $C_R(s)$ satisfies the differential equation
  \begin{equation}\label{eq:CRDifferentialEquation}
    \frac{d}{ds} C_R(s) =\bigl( D\Phi^{-1}(f^{t+T}x^*,-s)
    \cdot [ \eta_{f^{t+T-s}x^*} - \eta_{f^{T-s}z}] \bigr)  \circ
    \Phi(f^Tz,-s).  
  \end{equation}
  Differentiating the differential equation
  \eqref{eq:CRDifferentialEquation} $n$ times, we obtain the
  differential equation 
  \begin{equation*} 
    \frac{d}{ds} D^n
    C_R(s) =D^n \Bigl( \bigl( D\Phi^{-1}(f^{t+T}x^*,-s) \cdot
    [\eta_{f^{t+T-s}x^*}-\eta_{f^{T-s}z}]\bigl) \circ \Phi(f^Tz,-s)
    \Bigl).
  \end{equation*}
  Now we apply the Fa\`a di Bruno formula to the differential equation
  for $D^nC_R(s)$ to obtain
  \begin{align*}
    \frac{d}{ds} D^n C_R(s) &= D^n \Bigl( \bigl( D
    \Phi^{-1}(f^{t+T}x^*,-s)
    \cdot [\eta_{f^{t+T-s} x^*}-\eta_{f^{T-s}z}]\bigr) \circ
    \Phi(f^Tz,-s) \Bigr)\\ 
    &=
    \begin{aligned}[t]
      \sum C_{k_1,\dots,k_n} & D^k \bigl( D \Phi^{-1}(f^{t+T}x^*,-s)
      \cdot [\eta_{f^{t+T-s} x^*}-\eta_{f^{T-s}z}]\bigr) \\
      &\cdot \bigl[ D \Phi(f^Tz,-s)^{\otimes k_1} \otimes \cdots
      \otimes D^n\Phi(f^Tz,-s)^{\otimes k_n} \bigr]
    \end{aligned}
  \end{align*}
  The general term in the sum consists of the product of two types of
  factors:
  \begin{equation*}
    D^k \bigl( D
    \Phi^{-1}(f^{t+T}x^*,-s) \cdot [\eta_{f^{t+T-s}
      x^*}-\eta_{f^{T-s}z}]\bigr)\text{ and }D^k\Phi(f^Tz,-s).
  \end{equation*}
  Taking the $n$-th derivative
  \begin{align*}
    D^n \bigl( &D\Phi^{-1}(f^{t+T}x^*,-s) \cdot
    [\eta_{f^{t+T-s}x^*}-\eta_{f^{T-s}z}] \bigr)\\
    &= \sum_{k=0}^n
    D^{k+1}\Phi^{-1}(f^{t+T}x^*,-s) \cdot [ D^{n-k}
    \eta_{f^{t+T-s}x^*}- D^{n-k}\eta_{f^{T-s}z}]
  \end{align*}
  which we may estimate by 
  \begin{align*}
    \bigl\|D^n \bigl( D\Phi^{-1}(f^{t+T}x^*,-s) \cdot &
    [\eta_{f^{t+T-s}x^*}-\eta_{f^{T-s}z}] \bigr) \bigr\|\\
    &\leq C \, e^{(n+1) (\rho_1 + \kappa \, \rho_0) s} d_M( f^{T-s} z,
    f^{t+T -s} x^*)^\alpha \\
    &\leq C \, e^{((n+1) (\rho_1 + \kappa \, \rho_0) - \lambda \alpha)
      s} \, d_M
    (f^Tz,f^{t+T}x^*)^\alpha\\
    &\leq C \, e^{((n+1) (\rho_1 + \kappa \, \rho_0) - \lambda \alpha)
      s} \, d_M (f^tx^* , f^{t+T}x^*)^\alpha
  \end{align*}
  Using the estimate from Lemma \ref{lem:DiffeomorphismGroup}, and our
  intermediate computation, we obtain 
  \begin{equation*}
    \bigl\|\frac{d}{ds}D^n C_R(s) \bigr\| \leq C e^{((2 n+1) (\rho_1
      + \kappa \, \rho_0) - \lambda \alpha ) s } \, d_M (f^t
    x^*,f^{t+T}x^*)^\alpha. 
  \end{equation*}
  Integrating, and using \eqref{eq:DiffeoValuedHypCondFlow}, we obtain 
  \begin{equation}
    \label{eq:DiffeomorphismDREstimate}
    d_{r-1}\bigl (C_R(T), \Id \bigr) \leq \frac{C} {\lambda \alpha -
      (2 r -1) 
      (\rho_1 + \kappa \, \rho_0)}\,   d_M (f^t x^*,f^{t+T}x^*)^\alpha.
  \end{equation}
  Finally we need to combine these estimates to obtain the final
  result. First observe that from the cocycle condition
  \begin{align*}
    C_R(T) &= \Phi^{-1}(f^{t+T}x^*,-T) \circ \Phi(f^{T}z,-T)\\
    &= \Phi(f^t x^*,T) \circ \Phi^{-1}(z, T) 
  \end{align*}
  Now
  \begin{align*}
    d_{r-1}\bigl( \Phi(f^t x^*,T), \Id \bigr) & \leq
    \begin{aligned}[t]
      d_{r-1} \bigl( \Phi(f^t x^*,T) &\circ \Phi^{-1}(z, T) \circ
      \Phi(z, T),\Phi(z, T) \bigr)\\
      &+ d_{r-1}\bigl( \Phi(z, T), \Id \bigr)
    \end{aligned}
  \end{align*}
  Using Lemma \ref{lem:MVT}, and \eqref{eq:PhizEstimateFlow}, gives
  \begin{align*}
    d_{r-2} \bigl( \Phi(f^t x^*,T) \circ \Phi^{-1}(z, T) \circ
    \Phi(z, T),\Phi(z, T) \bigr) \leq C \, d_{r-1}\bigl( C_R(T), \Id \bigr).
  \end{align*}
  Combining all our estimates yields
  \begin{equation}
    d_{r-2}\bigl( \Phi(f^t x^*,T), \Id \bigr) \leq C \,  d_M (f^t
     x^*,f^{t+T}x^*)^\alpha 
   \end{equation}
  which hence completes the proof.
\end{proof}

\subsection{Cocycles over an Anosov Diffeomorphism}

A statement analogous to Theorem \ref{thm:DiffeomorphismLivsic} holds
for diffeomorphism group valued cocycles over an Anosov
diffeomorphism. This can be obtained from the result on flows by
observing that the suspension of an Anosov diffeomorphism is an Anosov
flow. Proceeding in this fashion one needs to take a cocycle whose
generator is very close to the identity.

\begin{theorem}
  Let $M$ be a compact Riemannian manifold with $f:M \rightarrow M$
  be a $C^1$ topologically transitive $\lambda$-hyperbolic Anosov
  diffeomorphism. Let $N$ be a compact Riemannian manifold.

  Let $\Phi \in C^\alpha \bigr( M \times \Z, \diff^{r}(N) \bigr)$ and
  define $\eta(x) = \Phi(x,1)$. $\rho= \max_{x \in M} \|D \eta(x) \|$.

  Suppose that for the pair $f^t$ and $\eta$:
  \begin{enumerate}
  \item The periodic orbit obstruction vanishes:
    \begin{quote}
      If $f^np=p$ then $\Phi(p, n) = \Id$.
    \end{quote}

  \item The hyperbolicity condition is satisfied:
    \begin{equation}
      \label{eq:DiffeoValuedHypCondDiffeo}
      \rho^{2 r-1}  \lambda^\alpha  < 1. 
    \end{equation}
  \end{enumerate}
  Then there exists $\phi \in C^\alpha \bigl( M, \diff^{r-3}(N)
  \bigr)$ that solves
  \begin{equation}
    \label{eq:DiffeoValuedDiffeomorphismCoboundary}
    \Phi(x,n) = \phi(f^nx) \circ \phi^{-1}(x). 
  \end{equation}
\end{theorem}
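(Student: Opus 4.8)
\section*{Proof proposal}

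The plan is to deduce the statement from the flow version, Theorem~\ref{thm:DiffeomorphismLivsic}, by the suspension construction alluded to in the text; the genuinely new work lies in building the flow and transferring the cocycle. First I would pass to the mapping torus $M_f=(M\times\R)/\!\sim$, where $(x,s+1)\sim(fx,s)$, equipped with the suspension flow $f^t[x,s]=[x,s+t]$, so that the first-return map to the cross-section $M\times\{0\}$ is $f$. It is classical that $f^t$ is a $C^1$ topologically transitive Anosov flow and that, since the contraction of $f$ along $E^s$ over one unit of time is by $\lambda$, the flow $f^t$ is $\lambda'$-hyperbolic for every $\lambda'<-\log\lambda$, with $\lambda'$ as close to $-\log\lambda$ as desired after replacing the metric by an adapted one. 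Moreover closed orbits of $f^t$ have integer period and are in bijection with periodic orbits of $f$: $f^np=p$ gives a closed orbit through $[p,0]$ of period $n$.

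The second step is to realize $\Phi$ as the restriction of a flow cocycle. For each $x$ I would choose, $C^\alpha$ in $x$ and smooth in $s\in[0,1]$, a path $s\mapsto g_x(s)$ in $\diff^r(N)$ with $g_x(0)=\Id$ and $g_x(1)=\eta(x)$; when $\eta(x)$ is close to the identity one may take the explicit geodesic interpolation $p_s(y)=\exp_y\bigl(s\,\exp_y^{-1}(\eta(x)(y))\bigr)$ of Section~\ref{sec:diffeomorphismgroups}, reparametrized in $s$ so that the exponential growth of $\|D^k g_x(s)\|$ is spread evenly over $[0,1]$. Setting $\tilde\eta_{[x,s]}:=\bigl(\tfrac{d}{ds}g_x(s)\bigr)\circ g_x(s)^{-1}$ gives a $C^r$ vector field on $N$ depending $C^\alpha$-continuously on the base point (the gluing is respected because $g_x(1)=\eta(x)$ and $g_{fx}(0)=\Id$), so $\tilde\eta\in C^\alpha(M_f,\mathfrak{X}^r(N))$, and by the cocycle property its flow cocycle $\tilde\Phi$ satisfies $\tilde\Phi([x,0],n)=\Phi(x,n)$.

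Then I would check the two hypotheses of Theorem~\ref{thm:DiffeomorphismLivsic} for $(f^t,\tilde\eta)$. The periodic orbit obstruction transfers verbatim through the bijection of periodic orbits. For the hyperbolicity condition, direct estimates on the reparametrized path (in the spirit of Lemma~\ref{lem:PathBounds}) give $\tilde\rho_0=\max\|\tilde\eta\|\le C\max_x d_0(\eta(x),\Id)$ and $\tilde\rho_1=\max\|D\tilde\eta\|\le\log\rho+\kappa C\max_x d_0(\eta(x),\Id)$, so that $(2r-1)(\tilde\rho_1+\kappa\tilde\rho_0)$ stays as close as we wish to $(2r-1)\log\rho$; combined with $\lambda'$ near $-\log\lambda$, the strict inequality \eqref{eq:DiffeoValuedHypCondDiffeo}, i.e. $\log(\rho^{2r-1}\lambda^\alpha)<0$, then yields \eqref{eq:DiffeoValuedHypCondFlow}, at least once $\eta$ is close enough to the identity that the $d_0(\eta(x),\Id)$ contributions are negligible --- precisely the additional requirement recorded in the remark before the statement. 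Theorem~\ref{thm:DiffeomorphismLivsic} now produces $\tilde\phi\in C^\alpha(M_f,\diff^{r-3}(N))$ with $\tilde\Phi([x,s],t)=\tilde\phi(f^t[x,s])\circ\tilde\phi([x,s])^{-1}$, and setting $\phi(x):=\tilde\phi([x,0])$ gives $\phi\in C^\alpha(M,\diff^{r-3}(N))$ with $\Phi(x,n)=\tilde\Phi([x,0],n)=\tilde\phi([f^nx,0])\circ\tilde\phi([x,0])^{-1}=\phi(f^nx)\circ\phi^{-1}(x)$, which is \eqref{eq:DiffeoValuedDiffeomorphismCoboundary}.

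The main obstacle is the construction in the second step: producing an interpolating family $g_x(\cdot)$ that is simultaneously H\"older in $x$, smooth in $s$, $C^r$ in the point of $N$, and has its derivatives controlled tightly enough --- in particular with $\tilde\rho_1$ of size $\log\|D\eta(x)\|$ rather than $\|D\eta(x)\|$ --- that the passage from the diffeomorphism cocycle to the flow cocycle does not squander the slack in \eqref{eq:DiffeoValuedHypCondDiffeo}. This is exactly where one cannot dispense with the assumption that $\eta$ is close to the identity: only then is the geodesic interpolation well defined, only then is the velocity exponent $\tilde\rho_0$ small, and only then does the reparametrized velocity field have first-derivative norm comparable to $\log\|D\eta(x)\|$ --- in sharp contrast with the Lie group case, where any convenient path from $\Id$ to $\eta(x)$ suffices and no localization is lost. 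The remaining points --- that restriction from $M_f$ to the smooth cross-section $M\times\{0\}$ preserves the $C^\alpha$ regularity and the loss $r\to r-3$ --- are routine.
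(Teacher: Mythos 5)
Your proposal reproduces the paper's \emph{alternative} proof by suspension, which the authors include only as a second route and explicitly flag as producing ``much less explicit'' smallness conditions; the paper's primary proof is direct, running the $C_F$/$C_R$ forward--backward comparison on the mapping torus of $f$ itself, with Lemma~\ref{lem:PreliminaryDiffeo} supplying the growth estimates $\|D^m\Phi(x,n)\|\le C\rho_1^{m|n|}$ that make the condition $\rho^{2r-1}\lambda^\alpha<1$ appear cleanly. Two specific problems with the suspension route as you write it.

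First, the gluing of $\tilde\eta$ across the identification $(x,1)\sim(fx,0)$ requires matching \emph{velocities}, not just endpoints: you need $\dot g_x(1)\circ g_x(1)^{-1}=\dot g_{fx}(0)$, and the parenthetical ``the gluing is respected because $g_x(1)=\eta(x)$ and $g_{fx}(0)=\Id$'' does not deliver this. Indeed your stated choice of reparametrization --- ``spread the exponential growth evenly over $[0,1]$'' --- is in direct tension with gluing, since an even spread keeps the velocity bounded away from zero at the seam. The paper resolves this by interpolating through $p_{(x,s)}(y)=\exp_y\bigl(m(s)\exp_y^{-1}\eta_x(y)\bigr)$ with a bump function $m\in C^\infty([0,1],[0,1])$ that is $C^\infty$-flat at both endpoints and satisfies $0\le m'(s)\le 1+\epsilon$; then $\tilde\eta$ vanishes to infinite order at the section and the glued vector field is well defined.

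Second, and more fundamentally, the estimate $\tilde\rho_1\lesssim\log\rho+\text{small}$ that you would need to recover the advertised condition \eqref{eq:DiffeoValuedHypCondDiffeo} does not hold for the geodesic interpolation you propose. Even in the one-dimensional linear model $\eta_x(y)=a_xy$, the constant-speed interpolation $g_x(s)(y)=(1+s(a_x-1))y$ has $\max_s\|D\tilde\eta_{(x,s)}\|=a_x-1>\log a_x$; only the ``exponential'' interpolation $a_x^sy$ achieves $\log a_x$, and there is no canonical analogue of that interpolation in $\diff^r(N)$. You recognize this and retreat to ``$\eta$ close enough to the identity,'' but that is a genuinely stronger hypothesis than $\rho^{2r-1}\lambda^\alpha<1$: the theorem as stated already covers, e.g., $\eta$ everywhere equal to a large isometry (where $\rho=1$ but $d_0(\eta_x,\Id)$ is order one), a case the suspension argument cannot reach. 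So your argument, like the paper's alternative proof, establishes a correct but weaker statement; the theorem as written is what the direct proof is for.
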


\begin{proof}
  Let $x^* \in M$ be a point with a dense orbit,
  $\mathcal{O}(x^*)$. If we fix $\phi(x^*)$ then, by
  \eqref{eq:DiffeoValuedDiffeomorphismCoboundary}, we can define
  $\phi$ on all of $\mathcal{O}(x^*)$ by
  \begin{equation*}
    \phi(f^nx^*) = \Phi(x^*,n) \circ \phi(x^*).  
  \end{equation*}

  Exactly as in the flow case, we have that the following H\"older
  condition on $\Phi$,
  \begin{multline}
    \label{eq:DiffeoValuedLocalHolderDiffeo}
    \text{ if $d_M( f^{n+N}x^*, f^nx^*)<\delta$ then}\\
    d_{r-2}\bigl( \Phi(f^nx^*,N), \Id \bigr) < K \, d_M( f^{n+N}x^*,
    f^nx^*)^\alpha
  \end{multline}
  implies the H\"older condition 
  \begin{multline}
    \label{eq:DiffeoValuedGlobalHolderDiffeo}
    \text{ if $d_M( f^{n+N}x^*, f^nx^*)<\delta$ then}\\
    d_{r-3}\bigl( \phi(f^{n+N}x^*), \phi(f^nx^*) \bigr) < K \, d_M(
    f^{n+N}x^*, f^nx^*)^\alpha.
  \end{multline}
  Condition \eqref{eq:DiffeoValuedGlobalHolderDiffeo} means that
  $\phi$ can be extended to $\phi \in C^\alpha\bigl(M,
  \diff^{r-3}(N)\Bigr)$.

  In order to complete the proof it suffices to prove
  \eqref{eq:DiffeoValuedLocalHolderDiffeo}. Suppose that $d_M(
  f^{n+N}x^*, f^nx^*)<\delta$ and apply the Anosov Closing Lemma,
  Lemma \ref{AnosovClosingFlow}, to obtain a periodic point $p \in M$
  with $f^{N}p=p$ and a point $z \in W^s(p) \cap W^u(f^nx^*)$. The
  periodic point satisfies $d_M( f^{n+N} x^*, p) \leq K \, d_M(
  f^{n+N}x^*, f^nx^*)$ and the messenger point satisfies $d_M( f^{n+N}
  x^*, z) \leq K \, d_M( f^{n+N}x^*, f^nx^*)$. 

  Again we let $C_F(m) := \Phi^{-1}(p,m) \circ \Phi(z,m)$. The
  diffeomorphism obeys the following equation
  \begin{equation*}
    C_F(m+1) = \Phi^{-1}(p,m) \circ \eta^{-1}_{f^m p} \circ \eta_{f^m z}
    \circ \Phi(p,m). 
  \end{equation*}
  
  Let $p_s$ be a path joining $ \eta^{-1}_{f^m p} \circ \eta_{f^m z}$
  to $\Id$. Since we are only interested in paths that approach the
  optimal, and since $d_{r-1}(\eta^{-1}_x \circ \eta_{x'}, \Id)$ is
  uniformly bounded, we may assume that $\ell_{r-1}(p_s)$ and
  $\ell_{r-1}(p_s^{-1})$ are uniformly bounded. Now using Lemma
  \ref{lem:TechnicalMVT} we have  
  \begin{equation*}
    \ell_{r-1}( \Phi^{-1}(p,m) \circ p_s) \leq C \|D
    \Phi^{-1}(p,m)\|_{r-1} ( 1 + \max_{s \in [0,1]} \|D
    p_s\|_{r-2})^{r-1} \, \ell_{r-1}(p_s)
  \end{equation*}
  Using Lemma \ref{lem:PathBounds} and Lemma
  \ref{lem:PreliminaryDiffeo} we obtain
  \begin{equation*}
    \ell_{r-1}( \Phi^{-1}(p,m) \circ p_s) \leq C \, \rho^{ r\, m} \,
    \ell_{r-1}(p_s) 
  \end{equation*}
  where $C$ is independent of $m$. Applying Lemma \ref{lem:MVT} we get 
  \begin{multline*}
    \ell_{r-1} ( \Phi^{-1}(p,m) \circ p_s \circ \Phi(z,m) ) \\
    \leq C \,
    \rho^{ r\, m} \, \max_{k_1, \dots,k_{r-1}} \|D^1 \Phi(z,m)\|^{k_1}
    \cdots \|D^{r-1}\Phi(z,m)\|^{k_r} \, \ell_{r-1}( p_s)    
  \end{multline*}
  which, after applying Lemma \ref{lem:PreliminaryDiffeo}, yields
  \begin{equation*}
    \ell_{r-1} ( \Phi^{-1}(p,m) \circ p_s \circ \Phi(z,m) ) \leq C \,
    \rho^{r \, m} \, \rho^{(r-1)\,m} \, \ell_{r-1}(p_s).
  \end{equation*}
  By symmetry we get the same estimate for the inverse. Thus we have 
  \begin{equation*}
    d_{r-1}\bigl( C_F(m+1), C_F(m) \bigr) \leq C \, \rho^{(2r-1)\, m} \,
    d_{r-1}(\eta^{-1}_{f^m p} \circ \eta_{f^m z} , \Id ). 
  \end{equation*}

   Notice that by compactness there exists a $C >0$ so that
   $d_r(\eta_x, \Id) < C$ for all $x \in M$. Thus by Lemma
   \ref{lem:MVT} there exists a $K>1$ so that 
   \begin{align*}
     \frac{1}{K} \, d_{r-1} (\eta_x , \eta_{x'}) \leq d_{r-1} (\eta_x
     \circ \eta_{x'}^{-1} , \Id) \leq K \, d_{r-1} (\eta_x ,
     \eta_{x'}),\\
     \frac{1}{K} \, d_{r-1} (\eta_x , \eta_{x'}) \leq d_{r-1}
     (\eta_x^{-1} \circ \eta_{x'} , \Id) \leq K \, d_{r-1} (\eta_x ,
     \eta_{x'}).
   \end{align*}
  
   Since $\eta \in C^\alpha\bigl( M, \diff^r(N) \bigr)$ and $z \in
   W^s(p)$ we have
   \begin{equation*}
     d_{r-1}\bigl( C_F(m+1), C_F(m) \bigr) \leq C \, \rho^{(2r-1)\, m} \,
     \lambda^{\alpha\,m} \,  d_M( p , z )^\alpha.
   \end{equation*}
   In particular
   \begin{equation*}
     d_{r-1}\bigl( C_F(m+1), C_F(m) \bigr) \leq C \, \rho^{(2r-1)\, m} \,
     \lambda^{\alpha\,m} \,  d_M( f^{n+N}x^* , f^{n}x^* )^\alpha.
   \end{equation*}
   Thus for $m \geq 0$ we have the estimate
   \begin{equation*}
     d_{r-1} \bigl( C_F(m) , \Id \bigr) <
     \frac{C}{1-\rho^{2r-1}\lambda^\alpha} \, d_M( f^{n+N}x^* ,
     f^{n}x^* )^\alpha.   
   \end{equation*}
   Finally we observe that $C_F(N) = \Phi(z,N)$ since $\Phi(p, N)=
   \Id$ by the vanishing of the periodic orbit obstruction. Thus
   \begin{equation*}
     \label{eq:PhizEstimateDiffeo}
     d_{r-1}\bigl(\Phi(z,N), \Id \bigr)
     <\frac{C}{1-\rho^{2r-1}\lambda^\alpha} \, d_M( f^{n+N}x^* , 
     f^{n}x^* )^\alpha,  
   \end{equation*}
   and hence $\Phi(z,N)$ is uniformly bounded.

  Similar computations for $C_R(m) = \Phi^{-1}(f^{n+N}x^*, -m) \circ
  \Phi(f^Nz,-m)$ give the same result that for $m  \geq 0$
  \begin{equation*}
    \label{eq:CREstimateDiffeo}
    d_{r-1} \bigl( C_R(m) , \Id \bigr) < 
    \frac{C}{1-\rho^{2r-1}\lambda^\alpha} \, d_M( f^{n+N}x^* ,
    f^{n}x^* )^\alpha. 
  \end{equation*}
  Finally we observe that by the triangle inequality
  \begin{multline*}
    d_{r-1}\bigl( \Phi(f^n x^*, N) , \Id \bigr) \\
    \leq d_{r-1}\bigl( \Phi(f^nx^*,N) \circ \Phi^{-1}(z,N) \circ
    \Phi(z,N) , \Phi(z,N)
    \bigr)\\
    + d_{r-1}\bigl( \Phi(z,N), \Id \bigr).
  \end{multline*}
  Since $d_{r-1}\bigl( \Phi(z,N), \Id \bigr)$ is uniformly bounded by
  \eqref{eq:PhizEstimateDiffeo} we get 
  \begin{multline*}
    d_{r-2}\bigl( \Phi(f^n x^*, N) , \Id \bigr) \\
    \leq C \, d_{r-1}\bigl( \Phi(f^nx^*,N) \circ \Phi^{-1}(z,N) , \Id
    \bigr)\\
    + d_{r-2}\bigl( \Phi(z,N), \Id \bigr).
  \end{multline*}
  Observe that 
  \begin{equation*}
    \Phi(f^nx^*,N) = \Phi^{-1}(f^{n+N}x^*,-N) \text{ and }
    \Phi^{-1}(z,N) = \Phi(f^Nz,-N)
  \end{equation*}
  so
  \begin{equation*}
    d_{r-1}\bigl(
    \Phi(f^nx^*,N) \circ \Phi^{-1}(z,N) , \Id \bigr) = d_{r-1}\bigl(
    C_R(N) , \Id \bigr).
  \end{equation*}
  Combining \eqref{eq:PhizEstimateDiffeo} and
  \eqref{eq:CREstimateDiffeo} with our previous estimate we get 
  \begin{equation*}
    d_{r-2}\bigl( \Phi(f^n x^*, N) , \Id \bigr) <
    \frac{C}{1-\rho^{2r-1}\lambda^\alpha} \, d_M( f^{n+N}x^* , f^{n}x^* )^\alpha
  \end{equation*}
  which hence completes the proof. 
\end{proof}

An alternative proof by suspension is also possible though the
smallness conditions are much less explicit.  

\begin{proof}
  Let $\tilde{M}$ denote the usual suspension manifold 
  \begin{equation}
    \label{eq:11}
    \tilde M = \frac{M \times [0,1]}{ \sim} \qquad (x,1 ) \sim
    (f(x),0)  
  \end{equation}
  and define a flow $\tilde{f}^t:\tilde{M} \rightarrow \tilde{M}$ by
  $\tilde{f}^t(x,s) = (x,s+t)$. This flow is a $C^1$ topologically
  transitive $\lambda$-hyperbolic Anosov flow. It remains to show that
  we may select $\tilde{\eta}: \tilde{M} \rightarrow
  \mathfrak{X}^r(N)$ such that the cocycle $\tilde{\Phi}: \tilde{M}
  \times \R \rightarrow \diff^r(N)$ defined by
  \begin{equation*}
    \frac{d}{dt} \tilde\Phi\bigl((x,s),t\bigr) = \tilde\eta \bigl(
    \tilde{f}^t (x,s) \bigr) \circ
    \tilde\Phi\bigl((x,s),t\bigr), \qquad
    \tilde\Phi\bigl((x,s),0\bigr) = \Id
  \end{equation*}
  satisfies $\tilde\Phi\bigl((x,0),1\bigr) = \eta(x)$ and that the
  hyperbolicity condition for $\tilde\Phi$ is the same as for $\eta$.
  Provided $\eta(x)$ is sufficiently $C^1$ close to the identity then
  we can define $p : M \times [0,1] \rightarrow \diff^r(N)$ by
  \begin{equation*}
    p_{(x,s)} (y) = \exp_y \bigl[ m(s) \exp^{-1}_y \eta_x (y) \bigr]
  \end{equation*}
  where $m \in C^\infty\bigl([0,1],[0,1]\bigr)$ is $C^\infty$ flat at
  both $s=0$ and $s=1$, and has $0 \leq m'(s) \leq 1+\epsilon$. We
  have $p_{(x,0)}=\Id$ and $p_{(x_,1)}= \eta_x$. We may differentiate
  to obtain
  \begin{equation*}
    \frac{d}{ds} p_{(x,s)} = \tilde\eta_{(x,s)} \circ p_{(x,s)},
    \qquad  p_{(x,0)} = \Id
  \end{equation*}
  Now we can apply the flow version of the Liv\v{s}ic theorem to conclude
  that there exists $\tilde\phi \in C^\alpha\bigl( \tilde{M},
  \diff^{r-1}(N) \bigr)$ such that 
  \begin{equation*}
    \tilde\Phi\bigl((x,s),t\bigr) = \tilde\phi(\tilde{f}^t(x,s) \circ
    \tilde\phi(x,s). 
  \end{equation*}
  If we take $s=0$ and $t=n$ we obtain
  \begin{equation*}
    \tilde\Phi\bigl((x,0),n\bigr) = \tilde\phi(f^nx,0) \circ
    \tilde\phi(x,0). 
  \end{equation*}
  However we know that $\tilde\Phi\bigl((x,0),n\bigr)=\Phi(x,n)$ by
  construction and hence defining $\phi(x) = \tilde\phi(x,0)$ we obtain
  a solution to the coboundary equation. 
\end{proof}

\section{Existence of invariant conformal structures on the stable and
unstable bundles} \label{sec:conformal}

In this section we will consider possibility of defining metrics on
the stable and unstable bundles of Anosov systems that make the
mapping conformal.

Of course, the existence of expanding and contracting directions in an
Anosov map, makes it impossible to have metrics defined on the whole
tangent bundle which make the map conformal. The conformal structures
we consider in this section correspond to sub-Riemannian metrics on
the manifold, not to Riemannian ones.  In order to be able to do
analysis on the manifold, we will assume that the manifold is equipped
with a Riemannian metric, which we will assume analytic and which we
will refer to as {\sl background metric}.

Nevertheless, the existence of conformal metrics on the stable and
unstable bundles is a useful tool in the study of rigidity
questions. In \cite{delaLlave99, Llave04} it was shown that for
conformal Anosov systems, the only obstructions to smooth conjugacy were
the eigenvalues at periodic orbits (the paper above included an extra
assumption about the existence of global frames of reference in the
manifold, which we now remove).  One motivation for the papers above
was to understand geometrically the paper \cite{CastroM97}, which
studied related problems for analytic families on the torus.  The
papers \cite{KalininS03,Sadovskaya05} went further in the study of
geometric properties and showed that conformal Anosov systems are
smoothly equivalent to algebraic ones. Particularly interesting
systems of conformal Anosov systems are the geodesic flows on some
manifolds \cite{Yue}. For these systems, the results mentioned above
give a very strong rigidity of the manifolds.

Of course, conformal metrics have played an important role in the
theory of rigidity of manifolds.  Thus, the study in this section
provides a link between the dynamical rigidity of Anosov systems and
the geometric rigidity \cite{Mostow68, Mostow73}.  Indeed, once
the existence of a metric on the stable bundle that makes the mapping
conformal is established, the arguments of \cite{delaLlave99} are very
similar to those of \cite{Mostow68, Mostow73}. Indeed 
\cite{Jenkinson02} gave a proof of some particular cases of 
the results in \cite{Mostow68, Mostow73} using methods from 
the theory of differentiable rigidity.

The main result of this section will be Theorem \ref{conformal} which
gives necessary and sufficient conditions conditions for the existence
of families of conformal structures on the stable and unstable
bundles. The conditions involve the spectrum of the derivative of the
return maps at periodic orbits.

The proof of Theorem~\ref{conformal} that we will present is
remarkably similar to our proof of
Theorem~\ref{thm:LivsicLieGroupDiffeo}. We leave to the reader the
task of formulating the corresponding result on existence of conformal
metrics invariant under Anosov flows. The proof follows along
extremely similar lines.  We also note that similar arguments can be
used in the study of other geometric structures.

Putting together Theorem~\ref{conformal} and the global results of
\cite{KalininS03, Sadovskaya05}, we obtain that the global structure
of the manifold is determined by the eigenvalues at periodic
orbits. In the case of geodesic flows, it is well known that the
eigenvalues at periodic orbits are determined by the spectrum of the
Laplacian \cite{GuilleminK} (provided that the length spectrum is
simple).

We also note that in \cite{LlaveS05}, one can find the result that if
there is an invariant conformal structure that is in $L^p$ for $p$
sufficiently large, then there is a smooth conformal structure.

\subsection{Definitions and some elementary  results}
We start by briefly reviewing the theory of quasi-conformal maps and
setting the notation. All the results in this 
section are rather standard. Sources that we have found useful are
\cite{GheringP} and \cite{Vaisala71}.

Define the distortion of a differentiable 
map $f$ at a point $x$ with respect to a
metric $g$, denoted $K_g(f,x)$, by
\begin{equation}\label{distortion1} K_g(f,x) :=
  \frac{\max_{\substack{|v|=1\\ v\in T_xM}} |Df(x)v|_g}
  {\min_{\substack{|v|=1\\ v\in T_xM}} |Df(x)v|_g}
\end{equation} 
or, equivalently,
\begin{equation}
  \label{distortion2} 
  K_g(f,x) := \|Df(x)\| \cdot \|Df^{-1}\bigl(f(x) \bigr)\|
\end{equation} 
Of course $K_g(f,x) \geq 1$. We say that the map is conformal with
respect to $g$ if $K_g(f,x)=1$ for all $x$. Note that
\eqref{distortion2} makes it clear that
\begin{equation}
  \label{reciprocity} 
  K_{g} (f,x) = K_{g} (f^{-1},f(x)),
\end{equation}
and in particular, $f$ is conformal if and only if $f^{-1}$ is
conformal. 

If we take the sup and inf in \eqref{distortion1} 
when $v$ ranges over a sub-bundle $E$ of $TM$,  we
obtain the distortion along the  sub-bundle $E$,
which we will denote by $K_{g,E}(f,x)$.

Since any two metrics in a compact manifold are equivalent we have
for some constant $C_{g, \tilde g} > 0$
\begin{equation}\label{metricchange}
C_{g,\tilde g}^{-1} K_{\tilde g,E} (f,x)
 \le K_{g,E}(f,x) \le C_{g,\tilde g}
K_{\tilde g,E} (f,x)
\end{equation} where $C_{g,\tilde g}$ depends on the metrics $g$ and $\tilde
g$ but not on the map $f$ or the sub-bundle $E$.

The distortion and distortion along bundles satisfy a
sub-cocycle property
\begin{equation}\label{subcocycle} K_{g,E} (f_1\circ f_2,x) \le
K_{g, Df_2E}(f_1,f_2(x)) \, K_{g,E} (f_2,x).
\end{equation}
This follows from the chain rule and sub-multiplicativity of the
operator norm. 

We define $K_{g,E}(f) = \max_{x \in M} K_{g,E}(f,x)$. 

In particular, when $E$ is an invariant sub-bundle $Df(x)E_x \subset
E_{f(x)}$ we have:
\begin{equation}
  \label{subcocycle2} 
  K_{g,E} (f^n) \le [K_{g,E}(f)]^n.
\end{equation}
Hence, taking logarithms in \eqref{subcocycle} and using an elementary
sub-additive argument
\begin{equation*}
  \overline{K}_E(f) = \lim_{n \rightarrow \infty} \bigl( K_{g,E}(f) \bigr)^{1/n}
\end{equation*}
exists. By \eqref{metricchange},
$ \overline{K}_E(f)$ is independent of
the metric. It can be seen, but we will not use it here, that the 
distortion along a bundle is closely related to the spectral properties
of the weighted shift operator along the bundle. 

We also recall the following easy results about distortions of linear
operators on a fixed metric space, which we will use later to 
study the derivatives at fixed points.

Since $\|Av\|^2 = \langle v,A^*Av\rangle$, we have $\|A\|^ 2= \max
\spec (A^*A)$, $\|A^{-1}\|^{-1} = \min \spec (A^*A)$.  Hence
$$K(A)^2 = \frac{\max\spec (A^*A)}{\min\spec(A^*A)}\ .$$

\begin{prop}
  \label{orthogonal} 
  If $K(A) = 1$, then $\hat A = \dfrac{1}{\det (A)^{1/n}} A \in O(n)$, the
  orthogonal group corresponding to the metric. 
\end{prop}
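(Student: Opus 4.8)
The plan is to read everything off the identity $K(A)^2=\max\spec(A^*A)/\min\spec(A^*A)$ recorded just above the statement, combined with the spectral theorem for the symmetric operator $A^*A$.

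First I would note that, since $K(A)$ is finite, $A$ is invertible, so $A^*A$ is symmetric and positive-definite; in particular $\spec(A^*A)$ consists of finitely many strictly positive real numbers and $A^*A$ is orthogonally diagonalizable. The hypothesis $K(A)=1$ says precisely that $\max\spec(A^*A)=\min\spec(A^*A)$, so $\spec(A^*A)=\{\lambda\}$ for a single $\lambda>0$; diagonalizability then upgrades this to the operator identity $A^*A=\lambda\,\Id$.

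Next I would identify $\lambda$ by taking determinants: $\lambda^n=\det(A^*A)=\det(A^*)\det(A)=(\det A)^2$, hence $\lambda=(\det A)^{2/n}$, where $(\det A)^{1/n}$ is read as the positive $n$-th root of $|\det A|$ (in the geometric application $\det A>0$, so no ambiguity arises; this is the only ``subtlety'', and it is not a genuine obstacle). Setting $\hat A=(\det A)^{-1/n}A$ one then computes
\[
  \hat A^*\hat A=(\det A)^{-2/n}\,A^*A=(\det A)^{-2/n}\,\lambda\,\Id=\Id,
\]
so $\hat A\in O(n)$; since moreover $\det\hat A=(\det A)^{-1}\det A$ has modulus $1$, one even gets $\hat A\in SO(n)$ when $\det A>0$. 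There is no genuinely hard step here: the content lies entirely in the elementary linear algebra of the displayed formula for $K(A)^2$, and the argument is a short direct computation once the spectral theorem is invoked.
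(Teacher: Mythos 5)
Your proof is correct and proceeds along essentially the same lines as the paper's: the hypothesis $K(A)=1$ forces $A$ to scale all norms by a single constant (equivalently $A^*A=\lambda\,\Id$), the determinant normalization pins that constant to $1$, and orthogonality follows (the paper invokes polarization where you compute $\hat A^*\hat A=\Id$ directly; these are the same step in different dress). No gap; your remark about $(\det A)^{1/n}$ meaning the positive root of $|\det A|$ is the right way to handle the only ambiguity.
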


\begin{proof}
  Since $K(A) =1$ there is a constant $C >0$ such that $\| A v \| = C
  \| v \|$ for all $v \in \R^n$.  Since $\det \hat A = 1$ we must have
  $C=1$. The desired result follows from the polarization argument.
\end{proof}

\begin{prop}
  \label{determinant} 
  \begin{equation}
    \label{determinantbound} 
    \|A\| \le |\det (A)|^{1/n}K(A)
  \end{equation}
\end{prop}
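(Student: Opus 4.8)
The plan is to reduce everything to the singular values of $A$ and then invoke the arithmetic--geometric mean inequality in its weakest form, namely that a geometric mean dominates the smallest of its factors.

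First I would dispose of the degenerate case: if $A$ is not invertible then the right-hand side involves $\|A^{-1}\|$ and is infinite (equivalently $K(A)$ is not defined as a finite quantity), so the inequality is vacuous; hence we may assume $A$ is invertible. Let $\sigma_1 \geq \sigma_2 \geq \cdots \geq \sigma_n > 0$ denote the singular values of $A$, i.e.\ the square roots of the eigenvalues of $A^*A$. From the identities recorded just above the statement, $\|A\| = \sigma_1$ and $\|A^{-1}\|^{-1} = \sigma_n$, so that $K(A) = \|A\| \cdot \|A^{-1}\| = \sigma_1/\sigma_n$. Moreover $|\det A|^2 = \det(A^*A) = \prod_{i=1}^n \sigma_i^2$, hence $|\det A|^{1/n} = \bigl( \prod_{i=1}^n \sigma_i \bigr)^{1/n}$.

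The key --- and essentially only --- step is then the observation that the geometric mean of the $\sigma_i$ is at least their minimum, $\bigl( \prod_{i=1}^n \sigma_i \bigr)^{1/n} \geq \sigma_n$. Multiplying this by $K(A) = \sigma_1/\sigma_n > 0$ yields
\[
  |\det A|^{1/n} K(A) \;\geq\; \sigma_n \cdot \frac{\sigma_1}{\sigma_n} \;=\; \sigma_1 \;=\; \|A\|,
\]
which is precisely \eqref{determinantbound}. I do not anticipate any genuine obstacle here: the proposition is a soft consequence of the singular value description of $\|A\|$, $\|A^{-1}\|$ and $\det A$. If one prefers to avoid singular values altogether, the same argument can be phrased purely in terms of $\spec(A^*A)$, using $\bigl( \min \spec(A^*A) \bigr)^n \leq \det(A^*A) = |\det A|^2$ together with $\|A^{-1}\|^{-2} = \min \spec(A^*A)$ and $\|A\|^2 = \max \spec(A^*A)$.
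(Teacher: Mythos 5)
Your proof is correct and, in substance, the same as the paper's. The paper first observes the inequality for positive definite symmetric matrices (where it reduces to the statement that the geometric mean of the eigenvalues is at least the smallest one), and then reduces the general case to this one via $\|A\|^2 = \|AA^*\|\le \det(AA^*)^{1/n}K(AA^*) = |\det A|^{2/n}K(A)^2$. You simply phrase the same reduction directly in terms of the singular values $\sigma_1\ge\cdots\ge\sigma_n$ of $A$ (i.e.\ the square roots of $\spec(A^*A)$), which makes the key inequality $\bigl(\prod_i\sigma_i\bigr)^{1/n}\ge\sigma_n$ and the bookkeeping $\|A\|=\sigma_1$, $K(A)=\sigma_1/\sigma_n$, $|\det A|^{1/n}=(\prod_i\sigma_i)^{1/n}$ completely explicit; it is arguably a cleaner write-up of the identical idea, avoiding the slightly awkward ``obvious for diagonal, hence diagonalizable'' remark in the paper (which is really only valid under orthogonal, not arbitrary, similarity, as you implicitly recognize by sticking to $A^*A$). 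No gap.
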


\begin{proof} 
  The desired result \eqref{determinantbound} is obvious for diagonal
  operators and, hence for diagonalizable, in particular symmetric
  operators.

  To prove the general case, we note that
  $$\|A\|^2 = \|AA^*\| \le \det (AA^*)^{1/n} K(AA^*) \le \det (A)^{2/n}K(A)^2$$

  Note also that the result is obvious for positive definite symmetric
  matrices since all the quantities in the formula can can be
  expressed in terms of the eigenvalues.

\end{proof}

\subsection{Results and their proofs}

In this subsection we formulate and prove Theorem~\ref{uniform} which
shows that the conformal properties of a transitive Anosov system are
determined by the behavior at periodic orbits.
Theorem~\ref{conformal} shows that, some spectral conditions on the
periodic orbits are enough to obtain the existence of invariant
conformal structures.

Similar theorems were proved in \cite{delaLlave99} under some extra
hypothesis about the manifold, in particular the existence of some
trivialization of the bundle.  The proof presented here is much more
geometric. Indeed, it is remarkably similar to the proof of
Theorem~\ref{thm:LivsicLieGroupDiffeo}. We will consider the
propagation of the structure and we will use the properties of the map
to show that the behavior at periodic orbits controls what happens on
a dense orbit.  In some auxiliary lemmas, to obtain the equivalence of
the hypothesis on the behavior at periodic orbits with other
hypothesis, we will need to use the specification property of
transitive Anosov systems.

\begin{theorem}\label{uniform}
  Let $M$ be a compact Riemannian manifold.  Let $f$ be a
  $C^{1+\alpha}$ $(0< \alpha \le \Lip)$ topologically transitive
  $\lambda$-hyperbolic Anosov diffeomorphism on $M$.  Let $E$ be a
  sub-bundle of the stable bundle $E^s$, which is invariant under $f$.
  
  Assume:
  \begin{itemize}
  \item[i)] There exists a constant $C_{\mathrm{per}}$ such that,
    whenever $f^N(x)=x$, with $N$ the minimal period of $x$, we have
    \begin{equation*}
      K_{g,E} (f^N,x) \le C_{\mathrm{per}}\ .
    \end{equation*}
  \item[ii)] $K_{g,E}(f)\le \rho$ with $\rho \, \lambda^\alpha < 1$.
  \end{itemize} 
  Then,  there exists $C>0$ such that 
for all $n \in \N$, we have
  \begin{equation*}
    K_{g,E} (f^n) \le  C.
  \end{equation*}
  Of course, an identical result holds for the unstable bundle.
\end{theorem}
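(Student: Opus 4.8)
The plan is to follow closely the proof of Theorem~\ref{thm:LivsicLieGroupDiffeo}, reading the family of linear maps $Df^n|_{E_x}\colon E_x\to E_{f^nx}$ as a $GL$-valued cocycle over $f$ (trivialise $E$ by a continuous frame field, or argue fibrewise), with the distortion $K_{g,E}(f^n,x)=\|Df^n|_{E_x}\|\cdot\|(Df^n|_{E_x})^{-1}\|$ from \eqref{distortion2} playing the r\^ole of $d_G(\Phi(x,n),\Id)$, and the sub-cocycle property \eqref{subcocycle} in place of the cocycle identity. The one structural change from Theorem~\ref{thm:LivsicLieGroupDiffeo} is that the vanishing of the periodic obstruction is weakened to the mere \emph{boundedness} $K_{g,E}(f^N,p)\le C_{\mathrm{per}}$, so this constant will appear in the estimates as a multiplicative factor rather than collapsing a term to the identity; hypothesis ii) is precisely the ``hyperbolicity condition'' that makes the geometric series below converge. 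One preliminary reduction: by the specification property of the transitive Anosov system one shadows a periodic point of minimal period $N_0$ arbitrarily well, $k$ times over, by a periodic point to whose minimal period i) does apply; combined with the fact that $Df^{N_0}|_E$ is a contraction and the continuity of $x\mapsto Df(x)$ this upgrades i) to a bound $K_{g,E}(f^m,q)\le C_{\mathrm{per}}'$ valid for \emph{every} periodic $q$ and \emph{every} $m\ge1$, which I assume henceforth.

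The heart of the argument is the local estimate, the analogue of \eqref{eq:LocalHolder}: there exist $\delta>0$ and $C_1>0$ such that $d_M(f^Ny,y)<\delta$ implies $K_{g,E}(f^N,y)\le C_1$. Applying the Anosov Closing Lemma (Lemma~\ref{AnosovClosingDiffeomorphism}) one gets an $N$-periodic point $p$ with $d_M(y,p),d_M(z,p)\le K\,d_M(f^Ny,y)$ and a point $z$ with $z\in W^s_{\mathrm{loc}}(p)$ and $z\in W^u_{\mathrm{loc}}(y)$. As in Theorem~\ref{thm:LivsicLieGroupDiffeo} set $C_F(m):=(Df^m|_{E_p})^{-1}\,Df^m|_{E_z}$ (fibres over nearby points being identified by parallel transport, at the cost of bounded factors): the increment $C_F(m+1)\,C_F(m)^{-1}$ is the conjugate, by $Df^m|_{E_p}$, of $(Df|_{E_{f^mp}})^{-1}Df|_{E_{f^mz}}$, which differs from the identity by $O\!\big(d_M(f^mp,f^mz)^\alpha\big)=O(\lambda^{\alpha m}\delta^\alpha)$ since $z\in W^s(p)$ and $Df$ is $\alpha$-H\"older (this is where $f\in C^{1+\alpha}$ is used), while the conjugating operator has norm $K_{g,E}(f^m,p)\le C_{\mathrm{per}}'$; the resulting bound $O\!\big((\rho\lambda^\alpha)^m\delta^\alpha\big)$ is summable by ii), so $C_F(N)$ — and hence $Df^N|_{E_z}=(Df^N|_{E_p})\,C_F(N)$ — stays within a bounded factor of $Df^N|_{E_p}$, giving $K_{g,E}(f^N,z)\le C\,C_{\mathrm{per}}'$. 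A second, backward-time comparison built on $z\in W^u(y)$ (the analogue of $C_R$ in Theorem~\ref{thm:LivsicLieGroupDiffeo}) is estimated identically and, combined with $Df^{-N}|_{E_{f^Ny}}=(Df^N|_{E_y})^{-1}$ and the bound just obtained for $Df^N|_{E_z}$, yields $K_{g,E}(f^N,y)\le C_1$.

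To globalise — note the local estimate holds at every point of $M$, so no dense orbit is required here — I would use a pigeonhole/covering argument. Fix a finite cover of $M$ by $L$ balls of radius $<\delta/3$. For $x\in M$ and $n>L$, two of the points $f^0x,\dots,f^Lx$ lie in a common ball $B_1$, the first of them at some time $a_1\le L$; let $b_1\le n$ be the last time the orbit of $x$ meets $B_1$. Then \eqref{subcocycle} gives $K_{g,E}(f^n,x)\le K_{g,E}(f^{a_1},x)\,K_{g,E}(f^{b_1-a_1},f^{a_1}x)\,K_{g,E}(f^{n-b_1},f^{b_1}x)$, in which the first factor is $\le\rho^L$ by ii), the middle one is $\le C_1$ because $f^{a_1}x,f^{b_1}x\in B_1$ makes that segment almost close up, and on the tail $[b_1,n]$ the orbit avoids $B_1$; recursing on the tail one necessarily meets a \emph{new} ball each time, so the recursion stops after at most $L$ steps and $K_{g,E}(f^n,x)\le(\rho^LC_1)^{L+1}=:C$; for $n\le L$ use simply $K_{g,E}(f^n,x)\le\rho^n$. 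The statement for the unstable bundle follows by applying all of this to $f^{-1}$ together with \eqref{reciprocity}.

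The step I expect to be the main obstacle is the one that already makes the non-commutative Liv\v{s}ic theorem delicate: because $K_{g,E}$ is only a sub-cocycle — not additive, as it would be for a line bundle or an abelian group — the telescoping in the local estimate closes only when the conjugating operators $(Df^m|_{E_p})^{-1}\,\cdot\,(Df^m|_{E_p})$ are controlled uniformly in $m$, and securing that is exactly what forces the a-priori upgrade of i) from minimal periods to all periods via specification. It is this interlock between the periodic-orbit hypothesis, the specification property, and the bound $\rho\lambda^\alpha<1$ — rather than any individual computation — that requires care.
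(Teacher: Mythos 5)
Your overall telescoping strategy matches the paper's: compare the derivative cocycle along orbits converging forward in time to a periodic point ($C_F$), and backward in time ($C_R$), using the $\alpha$-H\"older modulus of $Df$ and the bound $\rho\lambda^\alpha<1$ to sum a geometric series, and combine via the sub-cocycle inequality. That part is sound and is essentially what the paper does (with the one technical refinement you elide: the paper first normalizes $\eta$ by its determinant to $\hat\eta$ so that $K(\hat\eta)\le\tilde\rho$, since it is distortions, not operator norms, that the hypotheses control — this matters and should not be skipped).

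The genuine gap is the step you flag as ``upgrading i) to all periodic points and all $m\ge1$.'' You appeal to specification plus contraction plus continuity of $Df$, but none of these controls $K(A^k)$ when $K(A)\le C_{\mathrm{per}}$: if $A=Df^{N_0}|_{E_q}$ has a nontrivial Jordan block (which i) in no way excludes, since i) bounds only the one-period distortion), then $K(A^k)$ grows polynomially in $k$. The argument you sketch — shadow the $k$-fold concatenation of the $N_0$-periodic orbit by a nearby periodic point $q'$ to which i) applies, then transfer the bound from $q'$ to $q$ — requires comparing the length-$kN_0$ matrix products at $q$ and at $q'$, which is precisely the estimate the whole theorem is trying to establish; it cannot be invoked as a preliminary reduction without circularity. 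In fact ``$K_{g,E}(f^m,q)\le C'_{\mathrm{per}}$ for all periodic $q$ and all $m$'' is not a lemma, it is a special case of the theorem's conclusion.

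The paper avoids this entirely by never invoking the Anosov Closing Lemma for this theorem. Instead it uses the specification property to produce, for the given $x$ and $n$, a periodic point $p$ whose \emph{minimal} period is $n+m$ with $0\le m\le L$. Hypothesis i) is then applied exactly once, at the minimal period, and the residue is handled via $K_{g,E}(f^n,p)\le K_{g,E}(f^{n+m},p)\,K_{g,E}(f^{-m},p)\le C_{\mathrm{per}}\,K_{g,E}(f^{-m})$ with $m\le L$, where ii) controls the bounded excursion. With specification used this way, the bound on $K_{g,E}(f^n,x)$ is obtained for every $x$ and $n$ in a single pass, so your separate globalization-by-pigeonhole is not needed. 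That pigeonhole argument is in fact correct and rather elegant if one granted the local estimate, but the local estimate as you set it up (closing lemma, arbitrary-period $p$) is exactly where it breaks. The fix is to replace the Anosov Closing Lemma by specification — after which both the local/global split and the a priori upgrade of i) become unnecessary.
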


\begin{rem} 
There is a version of Theorem~\ref{uniform} for flows. We leave 
the straightforward formulation as well as the proof to the reader. 

The proof of the result for flows requires only minor modifications of
the proof we present here. The required modifications can be read off
the corresponding modifications made to the proof of
Theorem~\ref{thm:LivsicLieGroupDiffeo} to get the proof of
Theorem~\ref{thm:LivsicLieGroupFlow}. The only difference lies in the
version of Anosov Closing Lemma that we use and the fact that for
flows we have a term corresponding to the change in period to control.
\end{rem}

\begin{rem} 
  Note that because the distortion for a metric is equivalent to the
  distortion for another (see \eqref{metricchange}) if the hypothesis
  i) or the conclusions hold for one metric they hold for any other
  metric.  Hypothesis ii) on the other hand, depends on the metric
  $g$.  Later, when it is better motivated by the proof, we will
  introduce a replacement hypothesis \eqref{cocyclegrowth}, which,
  though harder to state, is more geometrically natural. We note that
  this hypothesis is a close analogue of the localization estimates of
  the first part of this paper, since it can be interpreted as a
  spectral condition for a transfer operator.
\end{rem}

\begin{proof} 
  We will show that for all $n\in\N$, we have $K_{g,E^s}(f^n) \leq C$.
  We recall that by the specification property of transitive Anosov
  systems~\cite[Theorem 18.3.12]{KatokH95}, given $\epsilon>0$
  sufficiently small we can find $L \in \N $ such that for every $x\in
  M$ and $n\in\N$, there exists a period point $p$ with minimal period
  $n + m$ with $0 \leq m \leq L$ that satisfies
  \begin{equation*}
    d_M\bigl(f^i(x),f^i(p) \bigr) \leq \epsilon \qquad 0\le i\le n\ .
  \end{equation*}
  We will choose one such $\epsilon > 0$ that will remain fixed for the
  rest of the proof.  This $\epsilon$ will have to satisfy a finite
  number of smallness conditions which we will make explicit when we
  need them. 

  By the local product structure~\cite[Proposition 6.4.21]{KatokH95}
  we have
  \begin{equation*}
    W_{\loc}^s(p) \cap W_{\loc}^u(x) =\{ z \}
  \end{equation*}
  with $d(x,z)<\epsilon$ and $d(p,z)<\epsilon$. We may suppose that 
 \begin{equation*}
    d_M\bigl(f^iz,f^ix \bigr) \leq \epsilon \qquad 0 \leq i \leq n.
  \end{equation*}
  
  We note that
  \begin{equation}
    \label{distortionp}
    \begin{array}{rcl} 
      K_{g,E^s} (f^n,p) &\le &K_{g,E^s}
      (f^{n+m},p)\, K_{g,E^s}(f^{-m},p) \\ \noalign{\vskip6pt} &\le &
      C_{\mathrm{per}} \, K_{g,E^s} (f^{-m})
    \end{array}
  \end{equation}
  with $C_{\mathrm{per}}$ the constant in assumption i). Hence, it
  suffices to argue that it is possible to control $K_{g,E^s} (f^n,x)$
  in terms of $K_{g,E^s}(f^n,p)$.

  \subsubsection{Some local coordinates}
  We will find it convenient to use matrix notation so we introduce a
  coordinate patch about each point of the orbit of $x$. It is
  important to note that these coordinate patches do not need to agree
  in the regions where they overlap. Hence, they do not impose any
  restriction on the manifold $M$.  Similar constructions happen in
  \cite{HirschPPS69}.

  One convenient way of choosing these coordinate systems is picking a
  coordinate system $\psi_i$ on $T_{f^i(x)}M$ with
  \begin{equation*}
    \langle u,v \rangle_{g(f^i(x))} = \langle \psi_i u, \psi_i v \rangle_2 
  \end{equation*}
  and then setting
  $$\Psi_i(y) = \psi_i \circ \exp_{f^i(x)}^{-1} (y)$$ 
  where the domain $U_i$ chosen as balls of radius $1/2$ the
  injectivity radius of the metric. We neither assume, nor require,
  that the $U_i$ are disjoint. Notice, however that these
  coordinate patches, centered around  each point in the orbit 
 include balls of radius bounded uniformly from
 below. Furthermore, the coordinate functions are  $C^r$ diffeomorphisms 
and they are uniformly $C^r$. So that, to  show that a geometric 
object is $C^r$ it suffices to show that its coordinates representations 
are uniformly $C^r$. Furthermore, the $C^r$ norm of a geometric object 
will be equivalent to the supremum of the $C^r$ norms of the coordinate
representations. 

Once we choose a system of coordinates, we can identify $Df(y)$, for
$y\in U_i\cap f^{-1}(U_{i+1})$, with the matrix
\begin{equation}\label{identified} 
  \eta_i(y) = D\Psi_{i+1}(f(y)) \, Df(y) \, (D\Psi_i(y))^{-1}.
\end{equation}
We have chosen the notation $\eta_i$ by analogy with the proof of
Theorem~\ref{thm:LivsicLieGroupDiffeo}.

We can now proceed in a way very similar to the proof of
Theorem~\ref{thm:LivsicLieGroupDiffeo}. For $y \in U_0$ we define
\begin{equation*}
  \Phi(y,m) = \eta_{m-1}(f^{m-1} y) \cdots \eta_0(y), \qquad m \geq 1 
\end{equation*}
and for $y \in U_n$ we define
\begin{equation*}
  \Phi(y,-m) = \eta_{n-m}(f^{-m} y) \cdots \eta_{n-1}(f^{-1}y), 
  \qquad m \geq 1 
\end{equation*}
where these products are defined. Our goal is to show $K \bigl(
\Phi(x,n) \bigr) < C $. By compactness, and our choice of
geometrically natural coordinate systems, showing that the distortion
of the coordinate representation of the derivative cocycle is
uniformly bounded suffices to show that the distortion of the
derivative cocycle itself is uniformly bounded.
  
If $y, z$ are points whose orbits are converging in forward time so
that $f^i(z)$ is always in the coordinate neighborhood of $f^i(y)$, we
can use the same coordinate patch.

Define $C_F(m) = \Phi^{-1}(p,m) \Phi(z,m)$

\begin{rem}
  The point of introducing coordinates is to avoid unnecessary
  complication with connections.  Intrinsically $C_F(m): T_zM
  \rightarrow T_pM$ defined by $Df^{-m}(f^m(p)) S Df^m(z)$ where $S$
  is an identification between $T_{f^m(z)}M$ and $T_{f^m(p)}M$. Note
  that, because the orbits of $z$ and $p$ are converging, we can
  always define the comparisons.  Intuitively, as the points converge,
  the identifications become less important. Using the identifications
  between the end points is a possible alternative setup. This is what
  are called {\sl connectors} in \cite{HirschPPS69}.
\end{rem}

We have the recurrence:
\begin{equation} \label{Crecurrence2}
    \begin{aligned}
      C_F(m+1) &= \Phi^{-1}(p,m) \eta^{-1}(f^m p) \eta(f^m z) \Phi(z,m)\\
      &=
      \begin{aligned}[t]
        \Phi^{-1}(p,m) &\bigl[ \eta^{-1}(f^m p) \eta(f^m z) -\Id
        \bigr]
        \Phi(z,m)\\
        &+ C_F(m)
      \end{aligned}
    \end{aligned}
  \end{equation}

  Of course, we are interested only in the distortion of $C_F(m)$, so,
  we normalize the matrices to have determinant $1$ with respect to
  the background metric.
  \begin{align*}
    \hat\eta_i(y) &= \frac{1}{(\det \eta_i(y))^{\frac{1}{n}}} \eta_i
    (y)\\
    V_i(y) &= (\det \eta_i(y))^{\frac{1}{n}}.  
  \end{align*}
  For $\epsilon > 0$ sufficiently small we can choose $\tilde \rho >0$
  and $\sigma >0$ such that $\sigma \tilde\rho^2 \lambda ^\alpha < 1$
  and 
  \begin{align*}
    K \bigl( \eta_i(y) \bigr)&\leq \tilde\rho && \text{for $d_M(y, f^i
      x) < \epsilon$,}\\ 
    \frac{V_i(y_1)}{V_i(y_2)} &\leq \sigma && \text{for $d_M(y_1, f^i
      x) < \epsilon$ and $d_M(y_2, f^i x) <  \epsilon$}. 
  \end{align*}
  
  The recurrence \eqref{Crecurrence2} can be written as:
  \begin{multline}\label{Crecurrence3}
    C_F(m+1) = \frac{V_{m}(f^{m}z)}{V_{m}(f^{m}p)} \cdots
    \frac{V_{0}(z)}{V_{0}(p)}\\
    \cdot\hat\Phi^{-1}(p,m) \bigl( \hat\eta^{-1}(f^mp) \hat\eta(f^mz)
    - \Id \bigr) \hat\Phi(z,m) + C_F(m).
  \end{multline}
  Since $f\in C^{1+\alpha}$ we have $\hat\eta_i \in C^\alpha$. Thus,
  since $z \in W^{s}(p)$
  \begin{equation*}
    \|\hat\eta^{-1}(f^mp) \hat\eta(f^m z) - \Id \| \le C_2
    (\lambda^{\alpha})^m.
  \end{equation*}
  Thus, by Proposition \ref{determinant},  
  \begin{equation*}
    \|C_F(m+1)-C_F(m)\| < C_2 \bigl( \sigma \tilde\rho^2 \lambda^\alpha
    \bigr)^m  .
  \end{equation*}
  Thus $\|C_F(m)\|$ is uniformly bounded. Similarly we obtain
  $\|C_F^{-1}(m)\|$ is uniformly bounded. Thus 
  \begin{equation}\label{eq:ForwardConformalEstimates}
    \begin{aligned}
      \| \Phi(z,n) \| &\leq \| \Phi(p,n) \| \, \| C_F(n) \|, \\
      \|\Phi^{-1}(z,n)\| &\leq \| \Phi^{-1}(p,n) \| \, \| C_F^{-1} (n)
      \|.
    \end{aligned}
  \end{equation}
  
  Now we perform the same computations along orbits converging
  exponentially in backwards time.  Let $C_R(m) = \Phi^{-1}(f^n x, -m)
  \Phi(f^n z, -m)$ and perform the same computations to obtain
  $\|C_R(m)\|$ and $\|C_R^{-1}(m)\|$ are uniformly bounded. Finally
  observe that we have the pseudo-cocycle property
  \begin{align*}
    \Phi^{-1}(f^n x, -n) &= \Phi(x,n)\\
    \Phi(f^nz,-n)&= \Phi^{-1}(z,n) 
  \end{align*}
  and so 
  \begin{equation}\label{eq:BackwardConformalEstimates}
    \begin{aligned}
      \| \Phi(x,n) \| &\leq \| C_R(n) \| \, \|\Phi(z,n)\|\\
      \| \Phi^{-1}(x,n) \| &\leq \| \Phi^{-1}(z,n) \| \,
      \|C_R^{-1}(n)\|.
    \end{aligned}
  \end{equation}
  Replacing the norms $ \|\Phi(z,n)\|$ and $ \| \Phi^{-1}(z,n) \|$ in
  \eqref{eq:BackwardConformalEstimates} with their estimates from
  \eqref{eq:ForwardConformalEstimates} and taking the product we  obtain
  \begin{align}\label{eq:3}
    K\bigl( \Phi(x,n) \bigr) \leq K\bigl (C_F(n) \bigr) \, K\bigl
    (C_R(n) \bigr) \, K\bigl ( \Phi(p,n) \bigr).
  \end{align}
  We have shown that 
  $ K\bigl (C_F(n) \bigr)$ and $K \bigl (C_R(n) \bigr)$ 
  are uniformly bounded. By our remarks at the outset we have
  $K\bigl ( \Phi(p,n) \bigr) $ 
  is uniformly bounded due to assumption  i). 
\end{proof}

\begin{rem} We call attention to the fact that the only place in the
  proof of Theorem~\ref{uniform} where we use the hypothesis ii) is in
  the estimates of \eqref{Crecurrence3}.  What we actually need is that
  the norms of $\Delta_F^m$ and $\Delta_R^m$ defined by
  \begin{align*}
    \Delta_F^m(A) &:=  \hat \Phi^{-1}(p,m) \, A \, \hat\Phi(z,m),\\
    \Delta_R^m(A) &:=  \hat \Phi^{-1}(f^nx,-m) \, A \, \hat \Phi(f^nz,-m)
  \end{align*}
  satisfy
  \begin{equation} \label{cocyclegrowth} \|\Delta_R^m\|,
    \|\Delta_F^m\| \leq C \, (\sigma \lambda^\alpha)^{-m}
  \end{equation} 
  for $n$ large enough.
  
  Note that the condition \eqref{cocyclegrowth} on the asymptotic
  growth of the of cocycles is independent of the background metric.
  It can be used in place of assumption ii) in
  Theorem~\ref{uniform}. Note the analogy with the localization
  estimates in the first part of this paper. In subsequent results
  (Theorem~\ref{conformal}) we will also have similar hypothesis.
\end{rem}

Now we come to the second main result in this section, which 
characterizes the existence of conformal structures by the behavior 
at periodic orbits. 

\begin{theorem}\label{conformal} 
  Let $M$ be a compact $d$-dimensional Riemannian manifold endowed
  with a Riemanian metric $g$.  (We refer to such a metric as the
  background metric.)  Let $f$ be a $C^{1+\delta}$ topologically
  transitive Anosov diffeomorphism.  $(0<\delta \le \Lip)$.
  \begin{itemize}
  \item[{\rm i)}] Assume whenever $f^N(x)=x$, then
    $$Df^N|_{E^s} = \gamma_{S,N}(x) \Id$$
    for some real numbers $\gamma_{S,N} (x)$
  \item[{\rm ii)}] Assume that
    $$K_{g,E^s} (f) \le a$$ 
    with $a$ sufficiently close to $1$.
  \end{itemize} Then there exists a $C^\delta$ metric $g^s$ on $E^s$
  such that $f$ is conformal on the stable leaves with respect to $g^s$.
  
  Analogous results hold also for unstable bundles and for Anosov
  flows.
\end{theorem}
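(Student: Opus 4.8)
The plan is to run the propagation argument from the proof of Theorem~\ref{thm:LivsicLieGroupDiffeo}, with the Lie group replaced by the space $\mathcal C=\mathcal C(\R^{d_s})$, $d_s=\dim E^s$, of conformal structures on $\R^{d_s}$ (equivalently, positive definite symmetric $d_s\times d_s$ matrices modulo positive scalars; a complete nonpositively curved symmetric space $\cong SL(d_s,\R)/SO(d_s)$ on which $GL(d_s,\R)$ acts by pushforward, the scalars acting trivially). Identifying each fibre $E^s_x$ isometrically for the background metric $g$ with $\R^{d_s}$, the derivative cocycle $\Phi(x,n)=Df^n|_{E^s_x}$ acts on conformal structures, and what we must produce is an $f$-invariant $C^\delta$ section $\phi$ of the bundle of conformal structures on $E^s$, i.e. $\Phi(x,n)_\ast\phi(x)=\phi(f^nx)$; the $g$-unit-determinant representative $g^s$ of $\phi$ is then a $C^\delta$ metric on $E^s$ for which invariance of the conformal class is exactly conformality of $f$ on stable leaves. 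As in the proof of Theorem~\ref{uniform} I would work in the uniformly $C^r$ coordinate patches $\Psi_i$ around the points of the orbit of a point $x^*$ with dense orbit, write $\eta_i$ for the coordinate representative of $Df|_{E^s}$, normalise $\hat\eta_i=\eta_i/|\det\eta_i|^{1/d_s}$ (uniformly $C^\delta$, since $f\in C^{1+\delta}$ forces $E^s\in C^\delta$), and set $\hat\Phi(x^*,n)=\hat\eta_{n-1}(f^{n-1}x^*)\cdots\hat\eta_0(x^*)$.

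First I would fix $\phi$ on the dense orbit by propagation: $\phi(x^*)=o$ (the background structure) and $\phi(f^nx^*)=\Phi(x^*,n)_\ast o$, represented in the patch $\Psi_n$ by the determinant-one matrix $S(n)=(\hat\Phi(x^*,n)\hat\Phi(x^*,n)^{*})^{-1}$. Since hypothesis i) (for a periodic point of period $N$, $Df^N|_{E^s}$ is a scalar and hence has distortion $1$) implies the hypothesis of Theorem~\ref{uniform}, that theorem gives $K(\Phi(x^*,n))\le C$ uniformly in $n$; as $\det\hat\Phi(x^*,n)=\pm1$ this bounds $\|\hat\Phi(x^*,n)\|$ and $\|\hat\Phi(x^*,n)^{-1}\|$, so $\phi$ stays in a fixed compact part of $\mathcal C$ along the orbit. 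It then remains, exactly as in Theorem~\ref{thm:LivsicLieGroupDiffeo}, to prove the local Hölder estimate: there are $\delta_0,K>0$ such that $d_M(f^{n+N}x^*,f^nx^*)<\delta_0$ implies $d_{\mathcal C}(\phi(f^{n+N}x^*),\phi(f^nx^*))\le K\,d_M(f^{n+N}x^*,f^nx^*)^\delta$, the two fibres being compared through the coordinate patch around $f^nx^*$. Given boundedness and this local estimate, uniform continuity extends $\phi$ to a section over $\overline{\mathcal O(x^*)}=M$; the extension is $C^\delta$ because the estimate is local and $\mathcal C$ is geodesic and complete; and the invariance relation $\Phi(x,1)_\ast\phi(x)=\phi(fx)$, holding on $\mathcal O(x^*)$ with both sides continuous (as $\Phi$ is $C^\delta$), holds on all of $M$.

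For the local estimate I would apply the Anosov Closing Lemma for diffeomorphisms, Lemma~\ref{AnosovClosingDiffeomorphism}, with $n=N$ (so the periodic point has period exactly $N$, which is needed for hypothesis i) to apply directly): one gets $p$ with $f^Np=p$ and $z$ with $\{z\}=W^s_{\loc}(p)\cap W^u_{\loc}(f^nx^*)$, with $d_M(f^ip,f^{i+n}x^*)$ and $d_M(f^iz,f^ip)$ of order $\mu:=d_M(f^{n+N}x^*,f^nx^*)$ on the relevant ranges of $i$. Setting $C_F(m)=\hat\Phi^{-1}(p,m)\hat\Phi(z,m)$ and $C_R(m)=\hat\Phi^{-1}(f^{n+N}x^*,-m)\hat\Phi(f^Nz,-m)$ and using the recurrence \eqref{Crecurrence3}, the driving term $\hat\eta^{-1}(f^mp)\hat\eta(f^mz)-\Id$ is bounded by $C\,d_M(f^mz,f^mp)^\delta\le C\lambda^{\delta m}\mu^\delta$ because $z\in W^s(p)$ and $\hat\eta$ is $C^\delta$; together with $\|\hat\Phi^{\pm1}(\cdot,m)\|\le\tilde\rho^{\,m}$ (where, by hypothesis ii) with $a$ close to $1$, one first fixes $\epsilon$ small and then has $\tilde\rho,\sigma$ close to $1$ with $\sigma\tilde\rho^2\lambda^\delta<1$) this yields $\|C_F(m+1)-C_F(m)\|\le C(\sigma\tilde\rho^2\lambda^\delta)^m\mu^\delta$, and summing, $\|C_F(N)-\Id\|\le C\mu^\delta$; the same computation on $W^u$ gives $\|C_R(N)-\Id\|\le C\mu^\delta$. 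The role of hypothesis i) is that now $\hat\Phi(p,N)=\pm\Id$ exactly, so $C_F(N)=\pm\hat\Phi(z,N)$ and, by the cocycle identity in the patches, $C_R(N)=\hat\Phi(f^nx^*,N)\hat\Phi^{-1}(z,N)$; hence $\|\hat\Phi(f^nx^*,N)\mp\Id\|\le C\mu^\delta$. Substituting into $S(n+N)=\bigl(\hat\Phi(f^nx^*,N)\,\hat\Phi(x^*,n)\hat\Phi(x^*,n)^{*}\,\hat\Phi(f^nx^*,N)^{*}\bigr)^{-1}$ and using that $\hat\Phi(x^*,n)$ is uniformly bounded (Theorem~\ref{uniform}) gives $\|S(n+N)-S(n)\|\le C\mu^\delta$, and therefore $d_{\mathcal C}(\phi(f^{n+N}x^*),\phi(f^nx^*))\le K\mu^\delta$, completing the proof. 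The flow case follows by the same modifications that pass from Theorem~\ref{thm:LivsicLieGroupDiffeo} to Theorem~\ref{thm:LivsicLieGroupFlow}, using Lemma~\ref{AnosovClosingFlow} and accounting for the change of period.

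The main obstacle is bookkeeping rather than a single hard estimate: one must keep comparisons of conformal structures within a single fibre wherever possible and arrange that the unavoidable comparison between the nearby fibres $E^s_{f^nx^*}$ and $E^s_{f^{n+N}x^*}$ through the coordinate patch costs only an extra $O(\mu^\delta)$, so that the exact vanishing of the periodic-orbit term $\hat\Phi(p,N)=\pm\Id$ is genuinely what upgrades the mere $K$-boundedness of Theorem~\ref{uniform} to Hölder continuity of the propagated structure; and one must choose the constants in the right order (fix $\epsilon$, hence the patch size and $\delta_0$, first, then demand $a$ close enough to $1$ that $\sigma\tilde\rho^2\lambda^\delta<1$).
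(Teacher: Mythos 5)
Your proposal is correct and takes essentially the same approach as the paper's own (rather terse) argument: propagate a conformal structure along a dense orbit via the determinant-normalised derivative cocycle in uniformly good coordinate patches, invoke Theorem~\ref{uniform} for uniform boundedness, and obtain the local H\"older estimate through the $C_F$/$C_R$ comparison across the messenger point from the Anosov closing lemma, with hypothesis~i) supplying the exact identity $\hat\Phi(p,N)=\pm\Id$ that upgrades boundedness to H\"older continuity. Your write-up fills in details the paper defers to the proofs of Theorems~\ref{thm:LivsicLieGroupDiffeo} and~\ref{uniform}, and your observation that hypothesis~i) of Theorem~\ref{conformal} implies hypothesis~i) of Theorem~\ref{uniform} with $C_{\mathrm{per}}=1$ (a scalar matrix has distortion one) is a welcome clarification of how that theorem enters.
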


Of course the metrics are highly non-unique since we can multiply by
an arbitrary function.

\begin{rem} 
  Note that hypothesis i) does not depend on the background metric but
  hypothesis ii) does.

  As mentioned before, later we will formulate a geometrically natural
  -- but somewhat harder to state -- hypothesis, \eqref{asympgrowth}
  which can be used in place of ii).
\end{rem}

\begin{rem} 
  Note that if we fix the conformal structure at one point $x^*$, the
  fact that $f$ is conformal, determines it at $f x^*$.  If we choose
  $x^*$ so that its orbit is dense, the conformal structure at $x^*$
  determines it in the whole manifold. Hence, there is at most a
  finite dimensional family of invariant conformal structures.  The
  proof of Theorem~\ref{conformal} is done by choosing a conformal
  structure at $x^*$, propagating the conformal structure along the
  dense orbit of $x^*$, and then, using the hypothesis on the spectrum
  of the periodic orbits showing it  extends to the whole manifold.

  Note that then, we prove that, under the hypothesis on periodic
  orbits, we get that there is a family of invariant conformal
  structures with the dimension of the space of conformal
  structures at one point. 
 Conversely, if there is family of conformal structures
invariant under the map whose dimension is the 
dimension of conformal structures at one point, then the derivative at
  a fixed point has to be the a multiple of the identity.
\end{rem}

\begin{rem} 
  A result very similar to Theorem~\ref{conformal} was proved in
  \cite{delaLlave99} but the proof required the existence of a global
  frame in the manifold.  It was shown in \cite{delaLlave99} that if
  the map $f$ is $C^r$, continuous invariant conformal structures are
  actually $C^{r-1 - \epsilon}$, $r \in \N \cup \{\infty, \omega\}$.
  The proof of the bootstrap of regularity in \cite{delaLlave99} is
  very geometric and works without extra assumptions on the existence
  of frames. So, we will just refer to that paper.  In
  \cite{LlaveS05}, it was shown that if an invariant conformal
  structures is in $L^p$ for $p$ large enough, then it is continuous
  and, therefore differentiable.

  The papers \cite{KalininS03, Sadovskaya05, KalininS07} show that the
  existence of a conformal structure on the stable and the unstable
  foliations for an Anosov systems, implies also some global
  properties of the manifold.
\end{rem}

\begin{proof}
Let $x^*$ be a point with a dense orbit.  We will define the desired
metric along the orbit of $x^*$ and show it extends to the whole
manifold in a H\"older fashion.

We consider the bundle isomorphism $f_\#$ on the bundle of quadratic
forms on $E^s$. Denoting the space of quadratic forms on $E_x^s$ by
$Q_x$ we define $f_{\#} : Q_x \to Q_{f(x)}$ by
$$f_{\#} g = (\det Df_s (x))^{2/d} g (Df_s^{-1} (f(x)))^{\otimes 2}$$ 
where $Df_s$ denotes the derivative of $f$ restricted to $E^s$ and
$d$ is the dimension of the stable bundle.  The determinant is
measured with respect to the background metric $g$.

We note that we can use the background metric $g$ to measure the
norm of $f_{\#}$.

We claim that, by assuming that $K_{g,E^s}(f)$ is sufficiently close
to one, we can ensure that $\|f_{\#}\|$ is as close to 1 as we want.
Hence, using assumption ii) we can assume in the proof that
$\|f_{\#}\|$ is sufficiently close to 1.

Indeed, if we choose coordinates in $E_x^s$, $E_{f(x)}^s$ in such a
way that $g_x$, $g_{f(x)}$ are represented by the identity matrix,
the operator $f_{\#}$ reduces to the operator
$$\mathcal{L} (S) = A^t SA/\det (A)^{2/n}$$ 
acting on the space of symmetric matrices, where $A$ is the coordinate
representation of $Df^{-1}(f(x))$.
  
Applying Proposition~\ref{determinant} we obtain
$\|A /|\det  A|^{1/d}\| \le K(A)$ from which the claim follows.
  
The hypothesis that we will need in the rest of the argument is
\begin{equation}\label{asympgrowth} 
\mbox{ii$'$)}\qquad \|f_{\#}^\ell
    \| \le C\mu^n - 2\ell
\end{equation} with $\mu$ smaller than $(\lambda^\delta)$ --- with
$\lambda$ the hyperbolicity exponent.
  
The rest of the proof is very similar to the proof of
Theorem~\ref{thm:LivsicLieGroupDiffeo} and Theorem~\ref{uniform}.  We
pick a metric on $g^s_{x^*}$ on $E_{x^*}^s$ and define 
\begin{equation*}
g^s_{f^nx^*} := f_{\#}^n g^s_{x^*}. 
\end{equation*}
To check that the metric $g^s$ defined along the orbit of $x^*$ extends
in a H\"older fashion to the whole of $M$ we recall that by the Anosov
Closing Lemma, Lemma \ref{AnosovClosingDiffeomorphism}, there
exists $\epsilon > 0$ such that if $d(f^n x^*, f^{n+N} x^*) \leq
\epsilon$, then there is a periodic point $p$ with $f^Np=p$ such
that
\begin{equation*}
  d(f^{n+i} x^* ,f^ip) \leq \epsilon
\end{equation*}
We will estimate $f_{\#}^N$ restricted to  $Q_{f^nx^*}$ using that, by
hypothesis i),  $f_{\#}^N$ restricted to $Q_p$ .  The estimates will depend
only on $\epsilon$ but will be uniform in $N$.
  
From the Anosov Closing Lemma, Lemma
\ref{AnosovClosingDiffeomorphism} we obtain a ``messenger'' point
$z\in W_{\mathrm{loc}}^{u}\bigl(f^n(x^*)\bigr) \cap
W_{\mathrm{loc}}^{s}(p)$.  We can take local coordinate systems
defined on neighborhoods $U_i$ around $f^{n+i}(x^*)$ for $0\le i\le N$,
in such a way that $f^i(p)$ is contained in the coordinate patch
$U_i$.  
  
We realize that $f_{\#}$ is $C^\alpha$ in the whole manifold.  Denote
by $\eta_i(y)$ the coordinate representation of $f_{\#}$ acting on
$Q_y$ for $y \in U_i$. Showing that the metric $f^{n+N}_\# g_{x^*}$ on
$E^s_{f^{n+N}x^*}$ is close enough to the metric $f^{n}_\# g_{x^*}$ on
$E^s_{f^{n}x^*}$ reduces to estimating
\[
[\eta(f^N(q))\cdots \eta(a)]^{-1}\eta (f^{n+N} x^*) 
  \cdots \eta(f^n(x^*)) -\Id
\]

As in the previous results we proceed to estimate
\begin{gather*} [\eta (f^N(z))\cdots \eta (z)]^{-1}\eta
    (f^{n+N}(x^*))\cdots \eta (f^n x^*) -\Id\\ [\eta (f^N(q))\cdots \eta
    (q)]^{-1}\eta (f^N(z))\cdots \eta(z) -\Id
\end{gather*}
  
The proof is exactly the same as in
Theorem~\ref{thm:LivsicLieGroupDiffeo} and we refer to the proof of
this result for the details.
\end{proof}

\bibliographystyle{alpha}
\bibliography{livsic}

\begin{thebibliography}{dlLMM86}

\bibitem[Ban97]{Banyaga97}
Augustin Banyaga.
\newblock {\em The structure of classical diffeomorphism groups}.
\newblock Kluwer Academic Publishers Group, Dordrecht, 1997.

\bibitem[BHN05]{MR2183291}
Henk Bruin, Mark Holland, and Matthew Nicol.
\newblock Liv\v sic regularity for {M}arkov systems.
\newblock {\em Ergodic Theory Dynam. Systems}, 25(6):1739--1765, 2005.

\bibitem[BI02]{Survey}
Marc Burger and Alessandra Iozzi, editors.
\newblock {\em Rigidity in dynamics and geometry}, Berlin, 2002.
  Springer-Verlag.

\bibitem[BN98]{MR1612768}
Hari Bercovici and Viorel Ni{\c{t}}ic{\u{a}}.
\newblock A {B}anach algebra version of the {L}ivsic theorem.
\newblock {\em Discrete Contin. Dynam. Systems}, 4(3):523--534, 1998.

\bibitem[Bow75]{Bowen}
Rufus Bowen.
\newblock {\em Equilibrium states and the ergodic theory of {A}nosov
  diffeomorphisms}.
\newblock Springer-Verlag, Berlin, 1975.
\newblock Lecture Notes in Mathematics, Vol. 470.

\bibitem[BP06]{MR2186242}
Luis Barreira and Yakov Pesin.
\newblock Smooth ergodic theory and nonuniformly hyperbolic dynamics.
\newblock In {\em Handbook of dynamical systems. Vol. 1B}, pages 57--263.
  Elsevier B. V., Amsterdam, 2006.
\newblock With an appendix by Omri Sarig.

\bibitem[CEG84]{ColletEG84}
P.~Collet, H.~Epstein, and G.~Gallavotti.
\newblock Perturbations of geodesic flows on surfaces of constant negative
  curvature and their mixing properties.
\newblock {\em Comm. Math. Phys.}, 95(1):61--112, 1984.

\bibitem[CFdlL03]{CabreFL03}
Xavier Cabr{\'e}, Ernest Fontich, and Rafael de~la Llave.
\newblock The parameterization method for invariant manifolds. {I}. {M}anifolds
  associated to non-resonant subspaces.
\newblock {\em Indiana Univ. Math. J.}, 52(2):283--328, 2003.

\bibitem[CM97]{CastroM97}
Jos{\'e}~Antonio Castro and Roberto Moriy{\'o}n.
\newblock Analytic conjugation of diffeomorphisms in ${\bf {t}}\sp n$.
\newblock {\em Ergodic Theory Dynam. Systems}, 17(2):313--330, 1997.

\bibitem[DG75]{MR0405514}
J.~J. Duistermaat and V.~W. Guillemin.
\newblock The spectrum of positive elliptic operators and periodic
  bicharacteristics.
\newblock {\em Invent. Math.}, 29(1):39--79, 1975.

\bibitem[dlL92]{MR1194019}
R.~de~la Llave.
\newblock Smooth conjugacy and {S}-{R}-{B} measures for uniformly and
  non-uniformly hyperbolic systems.
\newblock {\em Comm. Math. Phys.}, 150(2):289--320, 1992.

\bibitem[dlL01]{MR1849605}
Rafael de~la Llave.
\newblock Remarks on {S}obolev regularity in {A}nosov systems.
\newblock {\em Ergodic Theory Dynam. Systems}, 21(4):1139--1180, 2001.

\bibitem[dlL02]{delaLlave99}
Rafael de~la Llave.
\newblock Rigidity of higher-dimensional conformal {A}nosov systems.
\newblock {\em Ergodic Theory Dynam. Systems}, 22(6):1845--1870, 2002.

\bibitem[dlL04a]{MR2093313}
R.~de~la Llave.
\newblock Bootstrap of regularity for integrable solutions of cohomology
  equations.
\newblock In {\em Modern dynamical systems and applications}, pages 405--418.
  Cambridge Univ. Press, Cambridge, 2004.

\bibitem[dlL04b]{Llave04}
R.~de~la Llave.
\newblock Further rigidity properties of conformal {A}nosov systems.
\newblock {\em Ergodic Theory Dynam. Systems}, 24(5):1425--1441, 2004.

\bibitem[dlLMM86]{MR840722}
R.~de~la Llave, J.~M. Marco, and R.~Moriy{\'o}n.
\newblock Canonical perturbation theory of {A}nosov systems and regularity
  results for the {L}iv\v sic cohomology equation.
\newblock {\em Ann. of Math. (2)}, 123(3):537--611, 1986.

\bibitem[dlLO99]{delaLlaveO99}
R.~de~la Llave and R.~Obaya.
\newblock Regularity of the composition operator in spaces of {H}\"older
  functions.
\newblock {\em Discrete Contin. Dynam. Systems}, 5(1):157--184, 1999.

\bibitem[dlLS05]{LlaveS05}
Rafael de~la Llave and Victoria Sadovskaya.
\newblock On the regularity of integrable conformal structures invariant under
  {A}nosov systems.
\newblock {\em Discrete Contin. Dyn. Syst.}, 12(3):377--385, 2005.

\bibitem[Dol05]{MR2153466}
Dmitry Dolgopyat.
\newblock Livsi\v c theory for compact group extensions of hyperbolic systems.
\newblock {\em Mosc. Math. J.}, 5(1):55--67, 2005.

\bibitem[FF07]{FlaminioF}
Livio Flaminio and Giovanni Forni.
\newblock On the cohomological equation for nilflows.
\newblock {\em J. Mod. Dyn.}, 1(1):37--60, 2007.

\bibitem[FM03]{FisherM}
David Fisher and G.~A. Margulis.
\newblock Local rigidity for cocycles.
\newblock In {\em Surveys in differential geometry, Vol.\ VIII (Boston, MA,
  2002)}, Surv. Differ. Geom., VIII, pages 191--234. Int. Press, Somerville,
  MA, 2003.

\bibitem[GK80a]{GuilleminK}
V.~Guillemin and D.~Kazhdan.
\newblock Some inverse spectral results for negatively curved {$2$}-manifolds.
\newblock {\em Topology}, 19(3):301--312, 1980.

\bibitem[GK80b]{GuilleminK3}
Victor Guillemin and David Kazhdan.
\newblock Some inverse spectral results for negatively curved {$n$}-manifolds.
\newblock In {\em Geometry of the Laplace operator (Proc. Sympos. Pure Math.,
  Univ. Hawaii, Honolulu, Hawaii, 1979)}, Proc. Sympos. Pure Math., XXXVI,
  pages 153--180. Amer. Math. Soc., Providence, R.I., 1980.

\bibitem[GP]{GheringP}
F.W. Ghering and B.~Palka.
\newblock {\em Quasi-conformal maps}.
\newblock Manuscript.

\bibitem[GS97]{MR1448015}
Edward~R. Goetze and Ralf~J. Spatzier.
\newblock On {L}iv\v sic's theorem, superrigidity, and {A}nosov actions of
  semisimple {L}ie groups.
\newblock {\em Duke Math. J.}, 88(1):1--27, 1997.

\bibitem[HK90]{HurderK90}
S.~Hurder and A.~Katok.
\newblock Differentiability, rigidity and {G}odbillon-{V}ey classes for
  {A}nosov flows.
\newblock {\em Inst. Hautes \'Etudes Sci. Publ. Math.}, 72:5--61 (1991), 1990.

\bibitem[HPPS70]{HirschPPS69}
M.~Hirsch, J.~Palis, C.~Pugh, and M.~Shub.
\newblock Neighborhoods of hyperbolic sets.
\newblock {\em Invent. Math.}, 9:121--134, 1969/1970.

\bibitem[HPS77]{HirschPS77}
M.W. Hirsch, C.C. Pugh, and M.~Shub.
\newblock {\em Invariant manifolds}.
\newblock Springer-Verlag, Berlin, 1977.
\newblock Lecture Notes in Mathematics, Vol. 583.

\bibitem[Jen02]{Jenkinson02}
Oliver Jenkinson.
\newblock Smooth cocycle rigidity for expanding maps, and an application to
  {M}ostow rigidity.
\newblock {\em Math. Proc. Cambridge Philos. Soc.}, 132(3):439--452, 2002.

\bibitem[Jou88]{Journe88}
J.-L. Journ{\'e}.
\newblock A regularity lemma for functions of several variables.
\newblock {\em Rev. Mat. Iberoamericana}, 4(2):187--193, 1988.

\bibitem[Kat90]{MR1062764}
Svetlana Katok.
\newblock Approximate solutions of cohomological equations associated with some
  {A}nosov flows.
\newblock {\em Ergodic Theory Dynam. Systems}, 10(2):367--379, 1990.

\bibitem[KH95]{KatokH95}
Anatole Katok and Boris Hasselblatt.
\newblock {\em Introduction to the modern theory of dynamical systems}.
\newblock Cambridge University Press, Cambridge, 1995.
\newblock With a supplementary chapter by Katok and Leonardo Mendoza.

\bibitem[KN07]{MR2318540}
A.~Katok and V.~Ni{\c{t}}ic{\u{a}}.
\newblock Rigidity of higher rank abelian cocycles with values in
  diffeomorphism groups.
\newblock {\em Geom. Dedicata}, 124:109--131, 2007.

\bibitem[KS03]{KalininS03}
Boris Kalinin and Victoria Sadovskaya.
\newblock On local and global rigidity of quasi-conformal {A}nosov
  diffeomorphisms.
\newblock {\em J. Inst. Math. Jussieu}, 2(4):567--582, 2003.

\bibitem[KS07]{KalininS07}
B.~Kalinin and V.~Sadovskaya.
\newblock On anosov diffeomorphisms with asymptotically conformal periodic
  data.
\newblock 2007.
\newblock Preprint.

\bibitem[Liv71]{Livsic71}
A.~N. Liv{\v{s}}ic.
\newblock Certain properties of the homology of ${Y}$-systems.
\newblock {\em Mat. Zametki}, 10:555--564, 1971.

\bibitem[Liv72a]{Livsic72}
A.~N. Liv{\v{s}}ic.
\newblock Cohomology of dynamical systems.
\newblock {\em Izv. Akad. Nauk SSSR Ser. Mat.}, 36:1296--1320, 1972.

\bibitem[Liv72b]{Livsic72b}
A.~N. Liv{\v{s}}ic.
\newblock The homology of dynamical systems.
\newblock {\em Uspehi Mat. Nauk}, 27(3(165)):203--204, 1972.

\bibitem[LS72]{MR47:5902}
A.~N. Liv{\v{s}}ic and Ja.~G. Sina{\u\i}.
\newblock Invariant measures that are compatible with smoothness for transitive
  ${C}$-systems.
\newblock {\em Dokl. Akad. Nauk SSSR}, 207:1039--1041, 1972.

\bibitem[Mat68]{Mather68}
John~N. Mather.
\newblock Characterization of {A}nosov diffeomorphisms.
\newblock {\em Nederl. Akad. Wetensch. Proc. Ser. A 71 = Indag. Math.},
  30:479--483, 1968.

\bibitem[Moo87]{Moore}
Calvin~C. Moore.
\newblock Exponential decay of correlation coefficients for geodesic flows.
\newblock In {\em Group representations, ergodic theory, operator algebras, and
  mathematical physics (Berkeley, Calif., 1984)}, volume~6 of {\em Math. Sci.
  Res. Inst. Publ.}, pages 163--181. Springer, New York, 1987.

\bibitem[Mos68]{Mostow68}
G.~D. Mostow.
\newblock Quasi-conformal mappings in $n$-space and the rigidity of hyperbolic
  space forms.
\newblock {\em Inst. Hautes \'Etudes Sci. Publ. Math. No.}, 34:53--104, 1968.

\bibitem[Mos73]{Mostow73}
G.~D. Mostow.
\newblock {\em Strong rigidity of locally symmetric spaces}.
\newblock Princeton University Press, Princeton, N.J., 1973.
\newblock Annals of Mathematics Studies, No. 78.

\bibitem[NP01]{MR1855844}
Matthew Nicol and Mark Pollicott.
\newblock Liv\v sic's theorem for semisimple {L}ie groups.
\newblock {\em Ergodic Theory Dynam. Systems}, 21(5):1501--1509, 2001.

\bibitem[NS03]{MR2032492}
Matthew Nicol and Andrew Scott.
\newblock Liv\v sic theorems and stable ergodicity for group extensions of
  hyperbolic systems with discontinuities.
\newblock {\em Ergodic Theory Dynam. Systems}, 23(6):1867--1889, 2003.

\bibitem[NT95]{NiticaT95}
Viorel Ni{\c{t}}ic{\u{a}} and Andrei T{\"o}r{\"o}k.
\newblock Cohomology of dynamical systems and rigidity of partially hyperbolic
  actions of higher-rank lattices.
\newblock {\em Duke Math. J.}, 79(3):751--810, 1995.

\bibitem[NT98]{NiticaT98}
Viorel Ni{\c{t}}ic{\u{a}} and Andrei T{\"o}r{\"o}k.
\newblock Regularity of the transfer map for cohomologous cocycles.
\newblock {\em Ergodic Theory Dynam. Systems}, 18(5):1187--1209, 1998.

\bibitem[NT01]{NiticaT01}
Viorel Ni{\c{t}}ic{\u{a}} and Andrei T{\"o}r{\"o}k.
\newblock Local rigidity of certain partially hyperbolic actions of product
  type.
\newblock {\em Ergodic Theory Dynam. Systems}, 21(4):1213--1237, 2001.

\bibitem[NT02]{NiticaT02}
Viorel Ni{\c{t}}ic{\u{a}} and Andrei T{\"o}r{\"o}k.
\newblock On the cohomology of {A}nosov actions.
\newblock In {\em Rigidity in dynamics and geometry (Cambridge, 2000)}, pages
  345--361. Springer, Berlin, 2002.

\bibitem[NT03]{NiticaT03}
V.~Ni{\c{t}}ic{\v{a}} and A.~T{\"o}r{\"o}k.
\newblock Cocycles over abelian {TNS} actions.
\newblock {\em Geom. Dedicata}, 102:65--90, 2003.

\bibitem[NT06]{NicolT07}
M.~Nicol and A.~Torok.
\newblock Whitney regularity for solutions to the coboundary equation on cantor
  sets.
\newblock 2006.
\newblock preprint.

\bibitem[Par99]{MR1695916}
William Parry.
\newblock The {L}iv\v sic periodic point theorem for non-abelian cocycles.
\newblock {\em Ergodic Theory Dynam. Systems}, 19(3):687--701, 1999.

\bibitem[Pes04]{Pesin04}
Yakov~B. Pesin.
\newblock {\em Lectures on partial hyperbolicity and stable ergodicity}.
\newblock Zurich Lectures in Advanced Mathematics. European Mathematical
  Society (EMS), Z\"urich, 2004.

\bibitem[Pol05]{MR2166668}
Mark Pollicott.
\newblock Local {H}\"older regularity of densities and {L}ivsic theorems for
  non-uniformly hyperbolic diffeomorphisms.
\newblock {\em Discrete Contin. Dyn. Syst.}, 13(5):1247--1256, 2005.

\bibitem[PP97]{MR1489146}
William Parry and Mark Pollicott.
\newblock The {L}iv\v sic cocycle equation for compact {L}ie group extensions
  of hyperbolic systems.
\newblock {\em J. London Math. Soc. (2)}, 56(2):405--416, 1997.

\bibitem[PP06]{MR2276106}
William Parry and Mark Pollicott.
\newblock Skew products and {L}ivsic theory.
\newblock In {\em Representation theory, dynamical systems, and asymptotic
  combinatorics}, volume 217 of {\em Amer. Math. Soc. Transl. Ser. 2}, pages
  139--165. Amer. Math. Soc., Providence, RI, 2006.

\bibitem[PW01]{MR1828477}
M.~Pollicott and C.~P. Walkden.
\newblock Liv\v sic theorems for connected {L}ie groups.
\newblock {\em Trans. Amer. Math. Soc.}, 353(7):2879--2895 (electronic), 2001.

\bibitem[PY99]{MR1621702}
M.~Pollicott and M.~Yuri.
\newblock Regularity of solutions to the measurable {L}ivsic equation.
\newblock {\em Trans. Amer. Math. Soc.}, 351(2):559--568, 1999.

\bibitem[Sad05]{Sadovskaya05}
Victoria Sadovskaya.
\newblock On uniformly quasiconformal {A}nosov systems.
\newblock {\em Math. Res. Lett.}, 12(2-3):425--441, 2005.

\bibitem[Sch99]{MR1695917}
Klaus Schmidt.
\newblock Remarks on {L}iv\v sic' theory for nonabelian cocycles.
\newblock {\em Ergodic Theory Dynam. Systems}, 19(3):703--721, 1999.

\bibitem[Sin72]{MR0399421}
Ja.~G. Sina{\u\i}.
\newblock Gibbs measures in ergodic theory.
\newblock {\em Uspehi Mat. Nauk}, 27(4(166)):21--64, 1972.

\bibitem[V{\"a}i71]{Vaisala71}
Jussi V{\"a}is{\"a}l{\"a}.
\newblock {\em Lectures on $n$-dimensional quasiconformal mappings}.
\newblock Springer-Verlag, Berlin, 1971.
\newblock Lecture Notes in Mathematics, Vol. 229.

\bibitem[Vee86]{Veech}
William~A. Veech.
\newblock Periodic points and invariant pseudomeasures for toral endomorphisms.
\newblock {\em Ergodic Theory Dynam. Systems}, 6(3):449--473, 1986.

\bibitem[Wal00a]{MR1745387}
C.~P. Walkden.
\newblock Liv\v sic regularity theorems for twisted cocycle equations over
  hyperbolic systems.
\newblock {\em J. London Math. Soc. (2)}, 61(1):286--300, 2000.

\bibitem[Wal00b]{MR1637106}
C.~P. Walkden.
\newblock Liv\v sic theorems for hyperbolic flows.
\newblock {\em Trans. Amer. Math. Soc.}, 352(3):1299--1313, 2000.

\bibitem[Yue96]{Yue}
C.~Yue.
\newblock Quasiconformality in the geodesic flow of negatively curved
  manifolds.
\newblock {\em Geom. Funct. Anal.}, 6(4):740--750, 1996.

\bibitem[Zim84]{Zimmer}
Robert~J. Zimmer.
\newblock {\em Ergodic theory and semisimple groups}, volume~81 of {\em
  Monographs in Mathematics}.
\newblock Birkh\"auser Verlag, Basel, 1984.

\end{thebibliography}

\end{document}